\setlist[itemize]{leftmargin=*}
\theoremstyle{plain}
\newtheorem{theorem}{Theorem}[section]
\newtheorem{lemma}[theorem]{Lemma}
\newtheorem{corollary}[theorem]{Corollary}
\newtheorem{proposition}[theorem]{Proposition}
\newtheorem{definition}[theorem]{Definition}
\theoremstyle{remark}
\newtheorem{remark}[theorem]{Remark}
\newcommand{\A}{\mathcal{A}}
\numberwithin{equation}{section}
\title[Random Walk with Two Memory Channels]{Phase Transitions for Elephant Random Walks \\ with Two memory Channels} 
\author[K. Maulik]{Krishanu Maulik}
\address{Krishanu Maulik\\
    Theoretical Statistics and Mathematics Unit \\
    Indian Statistical Institute \\
    203 B. T. Road, Kolkata 700108 \\
    West Bengal, India \\
}
\email{kmisical@gmail.com}
\author[P. Roy]{Parthanil Roy}
\address{Parthanil Roy\\
    Department of Mathematics \\
   Indian Institute of Technology Bombay \\
    Powai, Mumbai\\
    Maharashtra 400076 \\
    India 
}
\email{parthanil.roy@gmail.com}
\author[T. Sadhukhan]{Tamojit Sadhukhan}
\address{Tamojit Sadhukhan\\
    Theoretical Statistics and Mathematics Unit  \\
   Indian Statistical Institute \\
    203 B. T. Road, Kolkata 700108 \\
    West Bengal, India
}
\email{tamojit96sadhukhan@gmail.com}
\thanks{The first author is partially supported by a SERB grant MTR/2019/001448. The second author is partially supported by a DST SwarnaJayanti Fellowship. The third author is partially supported by an IMU Breakout Graduate Fellowship.}
\begin{document}

\begin{abstract}
Elephant random walk, introduced to study the effect of memory on random walks, is a novel type of walk that incorporates the information of one randomly chosen past step to determine the future step. However, memory of a process can be multifaceted and can arise due to interactions of more than one underlying phenomena. To model this, random walks with multiple memory channels were introduced in the statistical physics literature by Saha (2022) - here the information on a bunch of independently chosen past steps is needed to decide the future step. With the help of variance heuristics, this work analyzed the two-channel case and predicted a double phase transition: from diffusive to superdiffusive and from superdiffusive to ballistic regimes. We prove these conjectures rigorously (with some corrections), discover a mildly superdiffusive regime at one of the conjectured transition boundaries, and observe a new second-order phase transition. We also carry out a detailed investigation of the asymptotic behavior of the walk at different regimes.
\end{abstract}

\keywords{random walk, memory channels, phase transition, stochastic approximation, law of large numbers, central limit theorem, law of iterated logarithms, recurrence and transience}

\subjclass[2020]{Primary: 60K50; Secondary: 60F05, 60F15, 82B26}

\maketitle

\section{Introduction}

Random processes with strong memory arise naturally in various disciplines including physics, economics, biology, geology, etc. Keeping this in mind, \cite{erw1} introduced a new discrete-time stochastic process, where the walker takes $\pm 1$ steps along the set of integers $\mathbb{Z}$. This walk starts at the origin and at the beginning, the walker moves either to the right ($+1$) with probability $q$ or to the left ($-1$) with probability $1 - q$, where $q \in (0,1)$. For all subsequent times, a previous step is selected uniformly at random from the history of the walk. The walker then either replicates this selected step with probability $p$, or takes the opposite step with probability $1 - p$ independently of everything else. The memory parameter $p \in (0,1)$ quantifies the ability of the walker to correctly recall a past step. The walker is metaphorically termed an elephant, which is claimed to possess strong memory. The walk thus generated is called an \emph{elephant random walk}, which has gathered substantial attention in the probability and statistical physics community in recent years; see, e.g., \cite{Bercu_2018, bertoin2022counting, 10.1063/1.4983566, Coletti_2017, coletti2019limit, Coletti_2021, harbola2014memory}.

Random walk with multiple memory channels is a generalization of the elephant random walk, introduced in \cite{rwnmc}, where the walker chooses $m~(\geq 2)$ mutually independent steps (instead of one) uniformly at random from the previous steps, and determines the next step following certain rules based on the chosen steps. As a result, the elephant has multiple memory channels ($m \geq 2$, to be specific), as opposed to the classical elephant random walk with only one such channel, to decide the next step. The strength of all the channels is determined by a single memory parameter $p \in (0,1)$. 

In \cite{rwnmc}, the variance of the position of the walker $S_n$ (at time $n$) was calculated heuristically in the case $m=2$ (i.e. two memory channels) and a double phase transition was predicted as the memory parameter $p$ varies in $(0,1)$, indicating transitions from diffusive to superdiffusive and from superdiffusive to ballistic regimes as described below: 
\begin{align}\label{eq:rw2mc-conj}
    \operatorname{Var}\left(S_n\right) \sim  \begin{cases}
       n & \mbox{if } \quad 0 < p \leq 0.68, \smallskip\\
       n^{8(2p-1)/3} & \mbox{if } \quad 0.68 < p < 0.88,\smallskip\\
       n^2 &\mbox{if }  \quad 0.88 \leq p < 1,
    \end{cases}
\end{align}
as $n \to \infty$. However, an exact probabilistic analysis of the above has remained an open problem, which we have been able to resolve as mentioned below.

We conducted a detailed and rigorous analysis of the asymptotic behavior of the walk using stochastic approximation and martingale techniques. We showed that the scaled position of the random walk converges almost surely to zero for $p < 7/8$ and to a nonzero random variable for $p > 7/8$ (see Theorem~\ref{thm:slln-rw2mc}). This result immediately implies that the walk is ballistic (see Corollary~\ref{cor1}) and transient (see Proposition~\ref{def:trrec})for $p > 7/8$, thus confirming one of the conjectures made in \cite{rwnmc} except that the number $0.88$ had to be corrected to $0.875 = 7/8$. However, the question of almost sure convergence at $p = 7/8$ remains open. We further established that the fluctuations of the scaled position around its almost sure limit exhibit three phase transitions, namely at $p = 11/16$, $p = 7/8$ and $p = ({113+\sqrt{97}})/{128}$ (see Theorem~\ref{thm:clt-rw2mc}). The first two phase transitions were more or less (i.e., with the approximations $0.68 \approx 0.6875 =11/16$ and $0,88 \approx 0.875 = 7/8$) conjectured in \cite{rwnmc}, while the third one is a new finding. For these fluctuations, we rigorously proved central limit theorems (see Theorem~\ref{thm:clt-rw2mc}-a),b)), laws of iterated logarithms (see Theorem~\ref{thm:lil-rw2mc}) and quadratic strong laws (see Theorem~\ref{thm:qsl-rw2mc}) for $p \in (0, 11/16] \cup [({113+\sqrt{97}})/{128}, 1)$ and almost sure convergence results (see Theorem~\ref{thm:clt-rw2mc}-c)) for $p \in (11/16, 7/8) \cup (7/8, ({113+\sqrt{97}})/{128})$.

It is essential to point out the effect of an extra memory channel on the asymptotic properties of the elephant random walk. As is clear from \cite{Bercu_2018, 10.1063/1.4983566, Coletti_2017}, the classical elephant ramdom walk (with one memory channel) undergoes a single phase transition from diffusive to superdiffusive growth at $p=3/4$. Due to the presence of one more memory channel, the random walk introduced in \cite{rwnmc} exhibits a double phase transition, with the same change (from diffusive to superdiffusive regimes) occurs at a smaller value, namely at $p=11/16~(< 3/4)$. Moreover, the additional channel of memory also ensures a nontrivial phase transition at $p=7/8$ from  superdiffusive to ballistic growth, which is absent in the classical elephant random walk except for the trivial case $p=1$. 

The structure of the paper is as follows. Section~\ref{sec:main} details the dynamics of the random walk with two memory channels, our main results (see Section~\ref{sec:res}) and a few open problems (see Remarks~\ref{remark_op1} and~\ref{remark_op2}). We then describe a bunch of some related stochastic processes in Section~\ref{sec:rel_proc}. Finally, Section~\ref{sec:sa} discusses the stochastic approximation techniques, their relevance to the random walk model including the supporting lemmas and their proofs, and the main results are proved in Section~\ref{sec:proofs}.

\section{The Model and Related Processes}\label{sec:main}

In this section, we first define the elephant random walk formally and then introduce the main model, namely the (elephant) random walk with two memory channels. We also state all key results of this paper. 

\subsection{Elephant Random Walk}

{We first describe the elephant random walk as defined in \cite{erw1}. Fix the parameters $p,q \in (0,1)$. We denote the location of the elephant at time $n \geq 0$ by $\xi_n$. Starting from $\xi_0 \equiv 0$, the walk evolves according to $\xi_{n+1} = \xi_{n} + Y_{n+1}$, where for each $n \geq 0$, $Y_{n+1}$ represents the $(n+1)$-th step of the elephant defined as follows: $Y_1 \sim \text{ Rad } (q)$ (i.e., $Y_1$ takes the value $+1$ with probability $q$ and the value $-1$ with probability $1-q$), and for $n \geq 1$, 
\begin{align}\label{eq01}
    Y_{n+1} = \begin{cases}
        +Y_{U_{n+1}} & \text{with probability } p, \\
        -Y_{U_{n+1}} & \text{with probability } 1 - p,
    \end{cases}
\end{align}
with $U_{n+1} \sim \text{Uniform}\{1, 2, \ldots, n\}$ independent of $\{Y_1, Y_2, \ldots, Y_n\}$. Equivalently, \eqref{eq01} can be written as
\begin{align*}
           {Y_{n+1} = \alpha_{n+1}Y_{U_{n+1}}},
\end{align*} 
where {$\alpha_{n+1} {\sim} \text{ Rad }(p)$}, independent of $U_{n+1}$ and $\{Y_1, \ldots, Y_n\}$.} Since the elephant recalls one past step chosen uniformly at random, it effectively operates with a single memory channel governed by the parameter $p$. In particular, through this memory channel, the elephant retrieves a version of the selected past step that is perturbed, through multiplication, by a $\text{Rad }(p)$ random variable $\alpha_{n+1}$.

\subsection{Random Walk with Two Memory Channels}\label{sec:model}

The dynamics of this walk is as follows. For $n \geq 0$, we use $S_n$ and $X_n$ to denote the location of this random walk at time $n$ and the $n$-th step of the walk, respectively. This walk begins from the origin $S_0 \equiv 0$ at epoch $n = 0$. At both epochs $n = 1$ and $n = 2$, the walk moves one step either to the right or to the left, each with probability $1/2$, independently of the other step. The locations of the walker at epochs $n=1$ and $n = 2$ are thus given by 
\[
    S_{1} = S_{0} + X_{1} = X_1, \quad S_{2} = S_{1} + X_{2} = X_1 + X_2,  
\]
where $X_1, X_2 \stackrel{iid}{\sim} \text{ Rad }(1/2)$. For each $n \geq 2$, at epoch $n+1$, the walker chooses two steps with replacement from its past $n$ steps independently and uniformly at random. Let $U_{1,n+1}$ and $U_{2,n+1}$ be the indices of the chosen past steps at epoch $n+1$, so that for $n \geq 2$, 
\[
   U_{1,n+1}, U_{2,n+1}  \stackrel{\text{ iid }}{\sim} \text{ Uniform }\{1, 2, \ldots, n\}, \quad (U_{1,n+1}, U_{2,n+1}) \perp \!\!\! \perp (X_1, X_2, \ldots, X_n).
\]
{Here both the memory channels are governed by a single parameter $p \in (0, 1)$}, and through these two memory channels, the walker is only able to retrieve versions of the two selected past steps, perturbed, through multiplication, by two random variables $\alpha_{1,n+1}$ and $\alpha_{2,n+1}$, respectively, where for $n \geq 2$, \[\alpha_{1,n+1}, \alpha_{2,n+1} \stackrel{\text{ iid }}{\sim} \text{ Rad } (p), \quad (\alpha_{1,n+1}, \alpha_{2,n+1}) \perp \!\!\! \perp (X_1, X_2, \ldots, X_n, U_{1,n+1}, U_{2,n+1}).\] 

The steps retrieved through the two memory channels, which we denote by $X^{(1)}_{n+1}$ and $X^{(2)}_{n+1}$ respectively, are therefore given by \begin{align*}
  X^{(1)}_{n+1} := \alpha_{1,n+1}X_{U_{1,n+1}}, \quad X^{(2)}_{n+1} := \alpha_{2,n+1}X_{U_{2,n+1}}. 
\end{align*}
The location at epoch $n+1$ is $S_{n+1} = S_n + X_{n+1}$, where the $(n+1)$-th step $X_{n+1}$ of the walker is given by 
\begin{align*}
          X_{n+1} = \begin{cases}+1, &\text{ if}\quad  \left(X^{(1)}_{n+1}, X^{(2)}_{n+1}\right) \in \{(+1,+1), (+1, 0), (0,+1)\}, \\[.3cm] \textcolor{white}{+}0, &\text{ if}\quad \left(X^{(1)}_{n+1}, X^{(2)}_{n+1}\right) \in \{(+1,-1), (-1,+1), (0, 0)\}, \\[.3cm] -1, &\text{ if}\quad \left(X^{(1)}_{n+1}, X^{(2)}_{n+1}\right) \in \{ (-1,-1), (-1, 0), (0,-1)\}. \end{cases}
\end{align*}

It is important to note that even though the first two steps are non-zero, the walk is such that the subsequent steps can possibly be zero. In other words, there may be situations where the walker becomes lazy and does not move at all. The $(n+1)$-th step $X_{n+1}$ is described in a different but equivalent way in \cite{rwnmc}, through the random variable $T_{n+1}$ given by
\begin{align*}
   T_{n+1} := \alpha_{1,n+1}X_{U_{1,n+1}} + \alpha_{2,n+1}X_{U_{2,n+1}} \, , \quad n \geq 2.
\end{align*}
It is easy to see that $X_{n+1}$ can also be written as
\begin{align*}
           X_{n+1} = T_{n+1}\boldsymbol{1}_{[T_{n+1} \, \in \, \{-1, 0, +1\}]} + \boldsymbol{1}_{[T_{n+1} =+2]} - \boldsymbol{1}_{[T_{n+1} =-2]}\,.
\end{align*}
In other words, $X_{n+1}$ is nothing but $T_{n+1}$ truncated at $\pm 1$. The above dynamics can be extended naturally to incorporate more than two memory channels as described in \cite{rwnmc}. We shall, however, restrict our attention to the two-channel case.

\subsection{Main Results} \label{sec:res}

In this section, we state the main results for the random walk with two memory channels. Our first result establishes the almost sure convergence of the scaled random walk for all $p \neq 7/8$.

\begin{theorem}[Almost sure convergence]\label{thm:slln-rw2mc}
For all $p \neq 7/8$,
\begin{align} \label{eq:s_p}
\frac{{S}_n}{n} \hspace{.1cm} \stackrel{a.s.}{\to} \hspace{.1cm} \Lambda_p := c_{p}\textnormal{{ Rad }}\left(1/2\right),
\end{align}
where
\begin{align*}
   c_{p} = &{\begin{cases}\hspace{1.2cm}{0}\hspace{0.85cm}, &{p \in \left(0, \frac{7}{8}\right)}, \\[.1cm] {\frac{\sqrt{32p^2-52p+21
   }}{(2p-1)^2}}, &{p \in \left(\frac{7}{8}, 1\right)}.\end{cases}}\end{align*}
\end{theorem}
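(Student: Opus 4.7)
The plan is to identify $S_n/n$ as the first coordinate of a two-dimensional stochastic approximation whose mean-field equilibria are precisely the claimed limits. Alongside $M_n := S_n/n$, introduce the empirical fraction of lazy steps $W_n := n^{-1}\sum_{k=1}^n \mathbf{1}[X_k = 0]$. A quick computation conditional on $\mathcal{F}_n$, using that a retrieved past step equals $+1,-1,0$ with probabilities $r_\pm = \tfrac{1}{2}[(1-W_n) \pm (2p-1)M_n]$ and $r_0 = W_n$ respectively, yields
\[
\E[X_{n+1} \mid \mathcal{F}_n] = (2p-1)(1+W_n)\,M_n, \qquad \E[\mathbf{1}[X_{n+1}=0] \mid \mathcal{F}_n] = \tfrac{1}{2}(1-W_n)^2 - \tfrac{1}{2}(2p-1)^2 M_n^2 + W_n^2.
\]
Dividing $S_{n+1} = S_n + X_{n+1}$ and the analogous recursion for $Z_n := n W_n$ by $n+1$ places $(M_n, W_n)$ in the Robbins--Monro form
\[
(M_{n+1}, W_{n+1}) = (M_n, W_n) + \tfrac{1}{n+1}\bigl(F(M_n, W_n) + \Delta_{n+1}\bigr),
\]
with bounded martingale-difference noise $\Delta_{n+1}$ and mean field $F = (f, g)$ where $f(m,w) = [(2p-1)(1+w) - 1]\,m$ and $g(m,w) = \tfrac{1}{2} - 2w + \tfrac{3}{2}w^2 - \tfrac{1}{2}(2p-1)^2 m^2$.

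Next, I would locate and classify the zeroes of $F$ in the admissible region $\{w \geq 0,\ |m| \leq 1-w\}$. Setting $f = 0$ gives either $m = 0$, in which case $g = 0$ forces $w = 1/3$ (the root $w = 1$ being degenerate), or $w = (2-2p)/(2p-1)$, which lies in $[0, 1]$ only when $p \in (7/8, 1)$; substituting the latter into $g = 0$ yields $(2p-1)^4 m^2 = 32p^2 - 52p + 21$, i.e.\ $m = \pm c_p$. The Jacobian of $F$ at $(0, 1/3)$ is diagonal with entries $(8p-7)/3$ and $-1$, so this equilibrium is linearly stable for $p < 7/8$ and unstable in the $m$-direction for $p > 7/8$; a short check (the Jacobian at $(\pm c_p, (2-2p)/(2p-1))$ has negative trace and positive determinant) shows that the non-trivial equilibria, when they exist, are linearly stable.

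With these ingredients in hand, the theorem would follow from the stochastic-approximation tools developed in Section~\ref{sec:sa}. Boundedness of $F$ together with $\sum (n+1)^{-2}\E[\|\Delta_{n+1}\|^2 \mid \mathcal{F}_n] < \infty$ give almost sure convergence of $(M_n, W_n)$ to the equilibrium set of $F$. For $p < 7/8$ the only candidate is $(0, 1/3)$, so $S_n/n \to 0$. For $p > 7/8$ convergence to the linearly unstable $(0, 1/3)$ is ruled out by a Pemantle / Brandi\`ere--Duflo non-convergence argument, leaving $(\pm c_p, (2-2p)/(2p-1))$ as the only possible limits, whence $|S_n/n| \to c_p$ almost surely. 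The involution $(X_1, X_2, \ldots) \mapsto (-X_1, -X_2, \ldots)$ preserves the joint law (since $X_1, X_2 \sim \text{ Rad }(1/2)$ and the memory rule is odd in the retrieved steps) and flips the sign of $S_n/n$, so the two limits $\pm c_p$ are equally likely, which gives the $\text{ Rad }(1/2)$ factor in~\eqref{eq:s_p}.

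The main obstacle I foresee is the non-convergence step at $p > 7/8$. The Pemantle criterion requires a uniform lower bound on the conditional variance of the martingale increment projected onto the unstable direction (the $m$-axis) in a neighbourhood of $(0, 1/3)$; at the equilibrium itself this variance equals $2/3$, so non-degeneracy holds on an open set, but one still has to exploit the stability of the $w$-coordinate to ensure that the trajectory spends enough time in this neighbourhood for the criterion to apply. The borderline case $p = 7/8$, where the unstable eigenvalue vanishes and the standard theory breaks down, is flagged by the authors as an open question.
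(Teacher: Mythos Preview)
Your plan is essentially the paper's own proof, recast in the linearly equivalent coordinates $(M_n,W_n)=(S_n^+/n-S_n^-/n,\,1-S_n^+/n-S_n^-/n)$ in place of the paper's $\boldsymbol{\Gamma}_n=(S_n^+/n,\,S_n^-/n)$; the equilibria, eigenvalues, the Pemantle non-convergence step, and the symmetry argument for the $\mathrm{Rad}(1/2)$ law all line up exactly with Theorem~\ref{thm:sa-slln} and the proof of Theorem~\ref{thm:slln-rw2mc}. Your choice of coordinates is arguably cleaner, since the Jacobian at $(0,1/3)$ is already diagonal.

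Two small points to patch. First, the assertion that bounded noise plus square-summable step sizes gives convergence to the \emph{equilibrium} set of $F$ is not automatic: standard stochastic-approximation theory (Bena\"im, Kushner--Clark) only delivers convergence to an internally chain-transitive set of the ODE $\dot x=F(x)$, which in two dimensions could in principle be a periodic orbit. The paper closes this gap by checking Dulac's criterion for $\boldsymbol{h}_p$ and invoking Poincar\'e--Bendixson; you should do the same (the computation is short in your coordinates as well). Second, the equilibrium $(m,w)=(0,1)$ that you label ``degenerate'' is the paper's $\boldsymbol{0}$, and it is not degenerate but linearly unstable with eigenvalues $4p-3$ and $1$; either rule it out by the same Pemantle argument you use for $(0,1/3)$ when $p>7/8$, or note that the noise in the $w$-direction is non-degenerate near $w=1$ so Pemantle applies there too.
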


\noindent An immediate consequence of \eqref{eq:s_p} is the following.

\begin{corollary}\label{cor1}
    If $\frac{7}{8} < p < 1$, 
    ${V}ar\left(S_n\right) \sim n^2$ and so the random walk with two memory channels is ballistic here. 
\end{corollary}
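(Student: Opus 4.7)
\textbf{Proof proposal for Corollary~\ref{cor1}.} The corollary will follow directly from Theorem~\ref{thm:slln-rw2mc} in two short steps, with essentially no additional technical input beyond boundedness of the steps. The plan is to extract ballisticity from the almost sure limit, then upgrade the almost sure convergence to $L^2$ convergence using the trivial bound $|X_i|\le 1$.

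For the ballisticity claim, Theorem~\ref{thm:slln-rw2mc} gives $S_n/n \to \Lambda_p$ almost surely, and $|\Lambda_p| = c_p$ is a strictly positive constant whenever $p \in (7/8, 1)$. Consequently $|S_n|/n \to c_p > 0$ almost surely, which is precisely the statement that the walk has positive asymptotic speed and is therefore ballistic in the usual sense.

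For the variance statement, I would first observe that $\E[S_n] = 0$ for every $n$. This is a symmetry argument: $X_1, X_2 \stackrel{iid}{\sim} \text{Rad}(1/2)$ are symmetric about $0$, and the recursion for $X_{n+1}$, built out of $\alpha_{1,n+1}X_{U_{1,n+1}} + \alpha_{2,n+1}X_{U_{2,n+1}}$ and the odd truncation at $\pm 1$, is equivariant under the global sign flip $X_i \mapsto -X_i$. Hence the joint law of $(X_1,\ldots,X_n)$ is invariant under this flip, giving $\E[S_n]=0$ and reducing the problem to asymptotics of $\E[S_n^2]$.

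The key observation for the rate is that $|X_i| \le 1$ for all $i$, so $|S_n/n| \le 1$ deterministically. The sequence $(S_n/n)^2$ is therefore uniformly bounded by $1$, and Theorem~\ref{thm:slln-rw2mc} together with the bounded convergence theorem yields
\[
\frac{\var(S_n)}{n^2} \;=\; \E\!\left[\left(\frac{S_n}{n}\right)^{\!2}\right] \;\longrightarrow\; \E[\Lambda_p^2] \;=\; c_p^2 > 0,
\]
so that $\var(S_n) \sim c_p^2 n^2$, as claimed. I do not anticipate any genuine obstacle: all the real work has already been done in Theorem~\ref{thm:slln-rw2mc}, and the corollary is essentially a one-line application of bounded convergence together with the symmetry of the model.
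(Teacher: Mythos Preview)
Your proof is correct and follows exactly the line the paper intends: the corollary is stated as an immediate consequence of Theorem~\ref{thm:slln-rw2mc}, and you have simply made that deduction explicit via symmetry (to get $\E[S_n]=0$) and bounded convergence (using $|S_n/n|\le 1$) to pass from the almost sure limit to $\var(S_n)/n^2 \to c_p^2>0$. The paper provides no further argument beyond declaring it immediate, so your write-up is a faithful and complete elaboration of the same approach.
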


\begin{remark} \label{remark_op1}
    Corollary~\ref{cor1} rigorously verifies one of the predictions of \cite{rwnmc}  (see \eqref{eq:rw2mc-conj}). The case $p=7/8$ is still open. 
\end{remark}

The fluctuations of the walk around the almost sure limit, namely ${S_n}/{n} - \Lambda_{p}$ exhibit different behavior for different values of $p$ and undergoes three phase transitions, respectively, at $ p = {11}/{16}$, $p = {7}/{8}$, and $p = {(113+\sqrt{97})}/{128}$. In order to state these results, we need the notion mentioned below.

\begin{definition}
    Suppose $\Sigma$ is a strictly positive random variable. We say that another random variable $Z$ is a variance mixture of Gaussian with random variance $\Sigma$ (and denote this by ${Z} \sim N\left(0,\Sigma\right)$) if the conditional characteristic function of ${Z}$ given $\Sigma$ is 
    \[
       \mathbb{E}\left(e^{\iota tZ} \mid \Sigma\right) = e^{-\frac{t^2\Sigma}{2}}, \quad t \in \mathbb{R}.
    \]
\end{definition}
    
\noindent With the help of this notion we state the following second-order phase transition result for random walks with two memory channels.

\begin{theorem}[Central Limit Behavior]\label{thm:clt-rw2mc}
The fluctuations of the walk around the almost sure limit exhibit the following central limit behavior based on the value of $p$. 
\begin{enumerate}
\item[a)] For $p \in \left(0, \frac{11}{16}\right)\cup\left(\frac{113+\sqrt{97}}{128}, 1\right)$,
\begin{align}\label{eqthm:clt1}
    \sqrt{n}\left(\frac{{S}_n}{{n}} - \Lambda_p\right) \stackrel{d}{\to} N\left(0,\Sigma^{(1)}_p\right),
\end{align}
where
\begin{align}\label{eq:sigma1}
   \Sigma^{(1)}_p = &{\begin{cases}{\frac{2}{11-16p}}, &{p \in \left(0, \frac{11}{16}\right)}, \\[.1cm] \Sigma^{(1)}_{\alpha, \beta, p}, &{p \in \left(\frac{113+\sqrt{97}}{128}, 1\right)},\end{cases}}
\end{align}
with $\alpha, \beta$ given by \eqref{eq:al-be} and the random variable $\Sigma^{(1)}_{\alpha, \beta, p}$ given by \eqref{eq:sig1}.
\item[b)] For $p \in \left\{\frac{11}{16}, \frac{113+\sqrt{97}}{128}\right\}$,
\begin{align}\label{eqthm:clt2}
    \sqrt{\frac{n}{\log n}}\left(\frac{{S}_n}{{n}} - \Lambda_p\right) \stackrel{d}{\to} N\left(0,\Sigma^{(2)}_p\right),
\end{align}
where
\begin{align}\label{eq:sigma2}
   \Sigma^{(2)}_p = &{\begin{cases}{\frac{2}{3}}, &{p = \frac{11}{16}}, \\[.1cm] \Sigma^{(2)}_{\alpha, \beta, p}, &{p = \frac{113+\sqrt{97}}{128}}.\end{cases}}
\end{align}
with $\alpha, \beta$ given by \eqref{eq:al-be} and the random variable $\Sigma^{(2)}_{\alpha, \beta, p}$ given by \eqref{eq:sig2}.
\item[c)] For $p \in \left(\frac{11}{16}, \frac{7}{8}\right)\cup\left(\frac{7}{8}, \frac{113+\sqrt{97}}{128}\right)$,
\begin{align}\label{eqthm:clt3}
    n^{y_p}\left(\frac{{S}_n}{{n}} - \Lambda_p\right) \stackrel{a.s.}{\to} L_{p},
\end{align}
where
${{L_p}}$ is a finite random variable and 
\begin{align*}
    y_p = \begin{cases}\hspace{1.7cm}{\frac{7-8p}{3}}\hspace{2.05cm},
    &{p \in \left(\frac{11}{16}, \frac{7}{8}\right)}, \smallskip \\{\frac{5p-4-\sqrt{-64p^3+161p^2-134p+37}}{2p-1}}, &{p \in \left(\frac{7}{8}, \frac{113+\sqrt{97}}{128}\right)}.\end{cases}
\end{align*}
\end{enumerate}
\end{theorem}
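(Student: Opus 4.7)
Let $\mathcal{F}_n = \sigma(X_1, \ldots, X_n)$, and alongside $\bar{S}_n := S_n/n$ introduce the proportion of null steps $\bar{c}_n := n^{-1}\sum_{k=1}^{n}\mathbf{1}[X_k = 0]$. A short computation using the dynamics of Section~\ref{sec:model} and the independence of $(\alpha_{i,n+1}, U_{i,n+1})$ from $\mathcal{F}_n$ gives
\[
\E[X_{n+1}\mid\mathcal{F}_n] = (2p-1)(1+\bar{c}_n)\,\bar{S}_n,
\qquad
\P(X_{n+1}=0 \mid \mathcal{F}_n) = \tfrac{1}{2}(1-\bar{c}_n)^2 + \bar{c}_n^2 - \tfrac{1}{2}(2p-1)^2\,\bar{S}_n^2,
\]
so that the pair $(\bar{S}_n, \bar{c}_n)$ obeys a two-dimensional Robbins--Monro recursion
\[
(\bar{S}_{n+1}, \bar{c}_{n+1}) = (\bar{S}_n, \bar{c}_n) + \tfrac{1}{n+1}\,h_p(\bar{S}_n, \bar{c}_n) + \tfrac{1}{n+1}\,\Delta M_{n+1},
\]
with polynomial mean field $h_p$ and bounded martingale-difference noise $\Delta M_{n+1}$ whose conditional covariance is itself polynomial in $(\bar{S}_n, \bar{c}_n)$. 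This reduces the problem to the asymptotic analysis of a stochastic approximation around its stable attractor.

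\textbf{Spectral analysis at the attractor.} In the diffusive phase $p < 7/8$ the attractor is $(0, 1/3)$ and $Dh_p(0, 1/3)$ is diagonal with eigenvalues $-(7-8p)/3$ and $-1$. In the ballistic phase $p > 7/8$, Theorem~\ref{thm:slln-rw2mc} combined with a one-line fixed-point computation identifies the attractor as $(\pm c_p,\, (2-2p)/(2p-1))$; here the Jacobian is genuinely $2\times 2$ but easily diagonalised, with slower eigenvalue exactly $-y_p$ (the exponent of part (c)). The critical values $p=11/16$ and $p=(113+\sqrt{97})/128$ are precisely those at which the slower eigenvalue crosses the stochastic-approximation threshold $-1/2$. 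A separate evaluation of the conditional covariance of $\Delta M_{n+1}$ at the attractor supplies the asymptotic noise covariance needed for the Gaussian limit.

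\textbf{Conclusion via stochastic approximation theory.} Writing $D_n := (\bar{S}_n - \Lambda_p, \, \bar{c}_n - c_*(p))$ and projecting onto the slow eigenspace of $Dh_p$ reduces the problem to a scalar recursion
\[
d_{n+1} = \left(1 - \tfrac{\lambda_p}{n+1}\right) d_n + \tfrac{1}{n+1}\bigl(\xi_{n+1} + o(|d_n|)\bigr),
\]
where $\lambda_p = (7-8p)/3$ in the diffusive phase and $\lambda_p = y_p$ in the ballistic phase. For $\lambda_p > 1/2$ (part (a)) the classical SA CLT gives $\sqrt{n}\,d_n \stackrel{d}{\to} N(0,\sigma_p^2/(2\lambda_p-1))$; in the diffusive phase $\E[X_{n+1}^2 \mid \mathcal{F}_n] \to 2/3$, producing $\Sigma^{(1)}_p = 2/(11-16p)$, while in the ballistic phase the limiting noise covariance depends on the (random) sign of $\Lambda_p$, producing the variance-mixed Gaussian $N(0,\Sigma^{(1)}_{\alpha,\beta,p})$. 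For $\lambda_p = 1/2$ (part (b)) the same scheme yields the standard $\sqrt{n/\log n}$ logarithmic correction. For $\lambda_p < 1/2$ (part (c)) the Kronecker-type identification $n^{\lambda_p}\,d_n = \widetilde{M}_n + o(1)$ with an $L^2$-bounded martingale $\widetilde{M}_n$ delivers the almost-sure finite limit $L_p$.

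\textbf{Main obstacle.} The chief technical difficulty will lie in the ballistic regime, where the Jacobian is genuinely two-dimensional: the slow/fast mode separation must be executed carefully to isolate $d_n$, and one must independently establish $\bar{c}_n \to c_*(p)$ almost surely (since Theorem~\ref{thm:slln-rw2mc} records only the convergence of $\bar{S}_n$). A further issue will be the verification of the regularity hypotheses of the SA limit theorems --- uniform control of the quadratic remainder of $h_p$ on a.s.-bounded trajectories, and convergence of the conditional noise covariance to its limit --- together with the bookkeeping, in the ballistic regime, of the randomness in the attractor's sign, which is precisely the source of the variance mixture appearing in part (a).
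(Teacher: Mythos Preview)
Your proposal is correct and follows the same stochastic-approximation strategy as the paper, with one cosmetic difference. The paper works with $\boldsymbol{\Gamma}_n = (S_n^+/n,\,S_n^-/n)$, the proportions of $+1$ and $-1$ steps, whereas you use $(\bar S_n,\bar c_n)$; these are related by an invertible affine map, so the recursions are equivalent. Your coordinates have the advantage that the Jacobian at the diffusive attractor $(0,1/3)$ is already diagonal---the paper's eigenvectors $(1,-1)^\top$ and $(1,1)^\top$ at $\boldsymbol{\gamma}_0$ become the standard basis in your frame---so the slow-mode projection for $p<7/8$ is simply the first coordinate. The paper instead diagonalises explicitly via $\boldsymbol{\nu}^{(i)}_p$ and splits $\boldsymbol{\Gamma}_n-\boldsymbol{\Gamma}^{(p)}$ into a martingale part $\boldsymbol{M}_n$ and a remainder $\boldsymbol{\Delta}_n$, controlling each via Lemmas~\ref{lem:1}--\ref{lem:4} and then invoking the Pelletier-type SA limit theorems; your scalar reduction $d_{n+1}=(1-\lambda_p/(n+1))d_n+\cdots$ is exactly the projected version of this. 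Both approaches face the same obstacle you flag: in the ballistic regime the attractor is random (two symmetric points), and the paper resolves it by arguing conditionally on $\{\boldsymbol{\Gamma}^{(p)}=\boldsymbol{\gamma}\}$ for each $\boldsymbol{\gamma}$ in the support---this conditioning is precisely what produces the variance-mixture Gaussian. Your worry about establishing $\bar c_n\to c_*(p)$ is handled in the paper by proving convergence of the full vector $\boldsymbol{\Gamma}_n$ (Theorem~\ref{thm:sa-slln}) via Dulac's criterion and Pemantle's nonconvergence-to-unstable-points theorem, rather than relying on Theorem~\ref{thm:slln-rw2mc} alone.
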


{For the fluctuations around the almost sure limit, we also derive the laws of iterated logarithms and the quadratic strong laws in the appropriate regimes. It is important to note that in case of the classical elephant random walk, the limits obtained in both of these are always non-random (see, e.g., \cite{gerw} and the references therein), which is not the case for the random walks with two memory channels. This can also be thought of as the effect of an extra memory channel.} 

\begin{theorem}[Law of Iterated Logarithms]\label{thm:lil-rw2mc}
{The rates of almost sure convergence of the fluctuations around the almost sure limit in the appropriate regimes are as follows.
\begin{enumerate}
\item[a)] For $p \in \left(0, \frac{11}{16}\right)$,
\begin{align}\label{eqthm:lil1-a}
    &\limsup_{n \to \infty} \sqrt{\frac{n}{2\log\log n}}\left(\frac{S_n}{{n}} - \Lambda_p\right) \\
    =-&\liminf_{n \to \infty} \sqrt{\frac{n}{2\log\log n}}\left(\frac{S_n}{{n}} - \Lambda_p\right) 
    {= \sqrt{\Sigma^{(1)}_p}}, \nonumber
\end{align}
where $\Sigma^{(1)}_p$ is given by \eqref{eq:sigma1}, and for $p \in \left(\frac{113+\sqrt{97}}{128}, 1\right)$, there exists a constant $C > 0$ such that
\begin{align}\label{eqthm:lil1-b}
    &\limsup_{n \to \infty} \sqrt{\frac{n}{2\log\log n}}\left|\frac{S_n}{{n}} - \Lambda_p\right| \leq C.
\end{align}
\item[b)] For $p \in \left\{\frac{11}{16}, \frac{113+\sqrt{97}}{128}\right\}$,
\begin{align}\label{eqthm:lil2}
    &\limsup_{n \to \infty} \left(\frac{n}{2\log n\log\log\log n}\right)^{1/2}\left(\frac{S_n}{{n}} - \Lambda_p\right) 
    \\ =-&\liminf_{n \to \infty} \left(\frac{n}{2\log n\log\log\log n}\right)^{1/2}\left(\frac{S_n}{{n}} - \Lambda_p\right) 
    {= \sqrt{\Sigma^{(2)}_p}}, \nonumber
\end{align}
where $\Sigma^{(2)}_p$ is given by \eqref{eq:sigma2}.
\end{enumerate}}
\end{theorem}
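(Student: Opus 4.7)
The plan is to combine the martingale decomposition that underlies Theorem~\ref{thm:clt-rw2mc} with a martingale law of the iterated logarithm. The stochastic approximation scheme produces a square-integrable martingale $M_n$ such that, after a suitable deterministic rescaling, $(S_n/n) - \Lambda_p$ equals $M_n$ plus an almost surely negligible remainder. Since the increments of $M_n$ inherit boundedness from $X_{n+1} \in \{-1, 0, +1\}$, any Lindeberg-type hypothesis in a martingale LIL is satisfied automatically, so the entire argument reduces to identifying the almost sure growth rate of the predictable quadratic variation $\langle M \rangle_n$ and showing that the remainder does not disturb the LIL normalization.

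For the diffusive regime $p \in (0, 11/16)$ of part~a), the computations already carried out for the CLT show that $\langle M \rangle_n \sim \Sigma^{(1)}_p\, n$ almost surely---this is precisely what drove the $\sqrt{n}$ normalization in \eqref{eqthm:clt1}---so a classical martingale LIL (Stout, or Hall and Heyde, Theorem~4.8) immediately yields \eqref{eqthm:lil1-a} with limsup/liminf equal to $\pm \sqrt{\Sigma^{(1)}_p}$. In the ballistic regime $p \in ((113+\sqrt{97})/128, 1)$, $\langle M\rangle_n$ still grows linearly, but with \emph{random} coefficient $\Sigma^{(1)}_{\alpha,\beta,p}$ depending on the auxiliary martingale limits $\alpha, \beta$ from \eqref{eq:al-be}. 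Since $\Sigma^{(1)}_{\alpha,\beta,p}$ is almost surely bounded by some deterministic constant (the underlying random variables being bounded), applying the martingale LIL pathwise on the event $\{\Sigma^{(1)}_{\alpha,\beta,p} \leq K\}$ for $K$ sufficiently large yields the one-sided bound \eqref{eqthm:lil1-b}. At the critical values $p \in \{11/16, (113+\sqrt{97})/128\}$ of part~b), $\langle M\rangle_n$ grows like $c_p\, n \log n$ (consistent with the $\sqrt{\log n}$ factor appearing in \eqref{eqthm:clt2}), and the boundary-case martingale LIL of Heyde--Scott type then produces the $\sqrt{n/(\log n \, \log\log\log n)}$ scaling with limit $\sqrt{\Sigma^{(2)}_p}$.

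The main obstacle will be upgrading the stochastic approximation error bounds from the convergence-in-probability control that sufficed for the CLT to almost sure control at the LIL rate $\sqrt{(\log\log n)/n}$. This will require sharper fourth-moment estimates on the remainder combined with Borel--Cantelli along geometrically spaced subsequences and a Kronecker-type interpolation, and is considerably more delicate than its CLT counterpart. A secondary difficulty is the ballistic regime itself, where the randomness of $\Sigma^{(1)}_{\alpha,\beta,p}$ forces the LIL constant to be random and the statement to be only one-sided; a two-sided refinement there would require a conditional LIL given the $\sigma$-field generated by $\alpha, \beta$, which we do not pursue.
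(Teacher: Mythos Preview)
Your overall strategy---reduce to a martingale via the stochastic approximation decomposition and then invoke a Stout-type martingale LIL---is exactly what the paper does, and for the diffusive regime $p\in(0,11/16)$ and the critical points of part~b) your outline is essentially correct. The paper carries this out by citing Pelletier's general LIL for stochastic approximation (which packages the remainder control you anticipate) rather than redoing the fourth-moment/Borel--Cantelli argument by hand, but the underlying mechanism is the same: $\boldsymbol{u}=(1,-1)^\top$ is an \emph{eigenvector} of the Jacobian $\boldsymbol{J}_p(\boldsymbol{\gamma}_0)$ in this range, so the projection $\boldsymbol{u}^\top(\boldsymbol{\Gamma}_n-\boldsymbol{\gamma}_0)$ really is (after the correct deterministic rescaling) a scalar martingale plus a remainder that is negligible at the LIL rate.

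Your treatment of the ballistic regime $p\in\bigl((113+\sqrt{97})/128,1\bigr)$, however, rests on a misreading. The quantities $\alpha,\beta$ in \eqref{eq:al-be} are \emph{deterministic} functions of $p$, not ``auxiliary martingale limits''; the randomness in $\Sigma^{(1)}_{\alpha,\beta,p}$ comes solely from which of the two stable equilibria $\boldsymbol{\gamma}_p,\widebar{\boldsymbol{\gamma}}_p$ the process $\boldsymbol{\Gamma}_n$ converges to. More importantly, the reason the paper states only the one-sided bound \eqref{eqthm:lil1-b} is \emph{structural}, not a matter of bounding a random variance. For $p>7/8$ the vector $\boldsymbol{u}$ is a genuine linear combination $\alpha\boldsymbol{\nu}^{(1)}_p+\beta\boldsymbol{\nu}^{(2)}_p$ of two distinct eigendirections, so $\boldsymbol{u}^\top\boldsymbol{M}_{n+1}(\boldsymbol{\gamma})$ contains the matrix exponential $e^{(a_n-a_k)\boldsymbol{J}_p(\boldsymbol{\gamma})}$ and does \emph{not} reduce to a single scalar martingale to which Stout's LIL can be applied. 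What survives is only the vector-valued norm bound $\limsup_n \sqrt{n/(2\log\log n)}\,\|\boldsymbol{\Gamma}_n-\boldsymbol{\gamma}\|\le C$ from Pelletier's Theorem~1, which is precisely \eqref{eq:mlil11} and yields \eqref{eqthm:lil1-b}. Your proposed pathwise argument ``on $\{\Sigma^{(1)}_{\alpha,\beta,p}\le K\}$'' would need a scalar martingale that does not exist in this regime, so that step would fail as written.
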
 

{The classical notions of recurrence and transience do not apply directly for the random walk with two memory channels as it exhibits time-inhomogeneous Markovian dynamics. So we formulate it separately for this setup following \cite{gerw}.
}

\begin{definition}[Transience and Recurrence]\label{def:trrec}
    {The random walk with two memory channels is said to be transient (respectively, recurrent) if, starting from the origin, it returns to the origin only finitely many times (respectively, infinitely often) with probability one.
}
\end{definition}

\noindent It is not immediate whether the walk is transient or recurrent or neither. The following result, which is a consequence of Theorem~\ref{thm:slln-rw2mc} and Theorem~\ref{thm:lil-rw2mc}, partially describes the recurrence and transience (as defined in Definition~\ref{def:trrec}) of the random walk with two memory channels.

\begin{proposition}[Transience and Recurrence]
    {The random walk with two memory channels is recurrent for $ 0 < p \leq \frac{11}{16}$ and is transient for $\frac{7}{8} < p < 1$.}
\end{proposition}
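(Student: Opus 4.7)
The plan is to handle the transient and recurrent regimes separately, exploiting the crucial structural fact that the increments $X_{n+1}$ take values in $\{-1, 0, +1\}$, so $|S_{n+1} - S_n| \leq 1$ for all $n$.

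For the transient regime $p \in (7/8, 1)$, the argument is immediate from Theorem~\ref{thm:slln-rw2mc}: since $S_n/n \to \Lambda_p = c_p \cdot \textrm{Rad}(1/2)$ almost surely with $c_p > 0$, we have $|S_n|/n \to c_p > 0$ almost surely, hence $|S_n| \to \infty$ almost surely. As $S_n$ is integer-valued, it can equal zero only finitely often on a set of full probability, which is the definition of transience.

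For the recurrent regime $p \in (0, 11/16]$, I would invoke the law of iterated logarithms from Theorem~\ref{thm:lil-rw2mc} instead of only the CLT (the CLT alone gives oscillation in distribution but not almost sure oscillation, which is what Definition~\ref{def:trrec} requires). For $p \in (0, 11/16)$, part (a) of Theorem~\ref{thm:lil-rw2mc} yields
\begin{align*}
   \limsup_{n \to \infty} \frac{S_n}{\sqrt{2n \log \log n}} &= \sqrt{\Sigma^{(1)}_p} > 0, \\
   \liminf_{n \to \infty} \frac{S_n}{\sqrt{2n \log \log n}} &= -\sqrt{\Sigma^{(1)}_p} < 0,
\end{align*}
almost surely, and for the boundary $p = 11/16$ the same conclusion follows from part (b) with $\Sigma^{(2)}_p$ and the normalization $\sqrt{2n \log n \log\log\log n}$. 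In both cases, $\limsup_n S_n = +\infty$ and $\liminf_n S_n = -\infty$ almost surely.

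To finish, I would combine these oscillation statements with the bounded-increment property: on the almost sure event where both $\limsup S_n = +\infty$ and $\liminf S_n = -\infty$, one can find arbitrarily large indices $n_1 < n_2$ with $S_{n_1} > 0$ and $S_{n_2} < 0$; since consecutive values of $S_n$ differ by at most one in absolute value and $S_n \in \mathbb{Z}$, the sequence must take the value $0$ at some index in $[n_1, n_2]$. Letting the windows grow shows that $S_n = 0$ for infinitely many $n$ almost surely, establishing recurrence. The only mildly delicate point is noticing that the definition of recurrence in Definition~\ref{def:trrec} really does demand \emph{almost sure} infinitely-often visits rather than mere recurrence in distribution, which is exactly why the LIL (rather than the CLT) is the appropriate input here.
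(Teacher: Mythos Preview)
Your proof is correct and follows exactly the approach indicated by the paper, which simply states that the proposition is ``a consequence of Theorem~\ref{thm:slln-rw2mc} and Theorem~\ref{thm:lil-rw2mc}'' without spelling out the details. You have supplied precisely the missing steps: transience for $p>7/8$ via $|S_n|\to\infty$ from the SLLN, and recurrence for $p\le 11/16$ via the LIL together with the bounded-increment intermediate-value argument.
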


\begin{remark} \label{remark_op2}
    The problem of determining whether the random walk with two memory channels is transient or recurrent or neither when $p \in (11/16, 7/8]$ is still open.
\end{remark}

{The final result describes the quadratic strong laws for this model.}

\begin{theorem}[Quadratic Strong Law]\label{thm:qsl-rw2mc}
{We have the following quadratic strong laws for the fluctuations around the almost sure limit.
\begin{enumerate}
\item[a)] For $p \in \left(0, \frac{11}{16}\right)\cup\left(\frac{113+\sqrt{97}}{128}, 1\right)$,
\begin{align}\label{eqthm:qsl1}
    \lim_{n \to \infty} \frac{1}{\log n}\sum_{k=1}^n\left(\frac{S_k}{k}-\Lambda_p\right)^2 
    {= \Sigma^{(1)}_p},
\end{align}
where $\Sigma^{(1)}_p$ is given by \eqref{eq:sigma1}.\\
\item[b)] For $p \in \left\{\frac{11}{16}, \frac{113+\sqrt{97}}{128}\right\}$,
\begin{align}\label{eqthm:qsl2}
    \lim_{n \to \infty}  \frac{1}{\log\log n}\sum_{k=1}^n\left(\frac{1}{\log k}\right)^2\left(\frac{S_k}{k}-\Lambda_p\right)^2 
    {= \Sigma^{(2)}_p},
\end{align}
where $\Sigma^{(2)}_p$ is given by \eqref{eq:sigma2}.
\end{enumerate}}
\end{theorem}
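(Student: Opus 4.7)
The plan is to inherit the quadratic strong law from an application of the martingale quadratic strong law to the same auxiliary martingale $(M_n)$ that is used to prove the central limit and iterated logarithm results of Theorems~\ref{thm:clt-rw2mc} and~\ref{thm:lil-rw2mc}. Writing the stochastic approximation recursion in its eigenbasis (as implicit in \eqref{eq:al-be}), one expects a decomposition of the form $S_n/n-\Lambda_p = a_n^{-1}M_n + R_n$, where $a_n$ is the deterministic normalizing sequence read off from the eigenvalues of the drift matrix, $(M_n)$ is a square integrable martingale with respect to the natural filtration $\mathcal{F}_n=\sigma(X_1,\dots,X_n)$, and $R_n$ is a remainder.

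First I would apply the martingale quadratic strong law of Chaabane--Maaouia (used, e.g., by Bercu in the analysis of the classical elephant random walk) to the martingale $(M_n)$. In the strongly diffusive regime $p\in(0,11/16)$, the predictable quadratic variation $\langle M\rangle_n$ is asymptotically linear in $n$ with deterministic coefficient, so the QSL produces the normalization $1/\log n$ and identifies the deterministic limit $\Sigma^{(1)}_p=2/(11-16p)$. In the ballistic regime $p\in((113+\sqrt{97})/128,1)$, the leading component of $M_n$ has a random multiplicative constant (driven by the same projections onto the $\alpha,\beta$ eigendirections of \eqref{eq:al-be} that produce $\Lambda_p$), so one must invoke the mixing/conditional form of the QSL, which produces the random limit $\Sigma^{(1)}_{\alpha,\beta,p}$ appearing in~\eqref{eq:sig1}. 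At the two critical values $p\in\{11/16,(113+\sqrt{97})/128\}$ the bracket picks up a logarithmic factor so that $\langle M\rangle_n\asymp n\log n$, and the correctly weighted QSL then produces the $\log\log n$ normalization and the $1/(\log k)^2$ weight inside the sum in \eqref{eqthm:qsl2}, identifying the limit as $\Sigma^{(2)}_p$.

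The hard part will be the Cesàro control of the remainder $R_n$. For the CLT only pointwise control $R_n=o_{\mathbb{P}}(n^{-1/2})$ is needed, and for the LIL an almost sure bound of the form $R_n=o(\sqrt{\log\log n/n})$ suffices; the QSL, on the other hand, requires the strictly stronger averaged bound
\begin{equation*}
\frac{1}{\log n}\sum_{k=1}^n R_k^2\xrightarrow[n\to\infty]{\mathrm{a.s.}}0,
\end{equation*}
together with its $\log\log n$-weighted analogue in the critical cases. Obtaining this will rely on sharp sum-of-squares estimates for the nonlinear discrepancy between the stochastic approximation recursion and its linearization, combined with Kronecker-type lemmas that convert pathwise bounds into Cesàro bounds against the slowly varying weights $1/\log k$ and $1/(\log k)^2$. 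A second subtlety in the ballistic regime is that one has to argue a \emph{simultaneous} almost sure convergence of the martingale's normalized bracket and of the bilinear cross term $a_k^{-1}M_kR_k$ (which enters when $(S_k/k-\Lambda_p)^2$ is expanded), for which the product form of the conditional QSL together with a Cauchy--Schwarz argument using the already established $R_n$ bound will be used. Once these ingredients are in place, identification of the limits in~\eqref{eqthm:qsl1} and~\eqref{eqthm:qsl2} with $\Sigma^{(1)}_p$ and $\Sigma^{(2)}_p$ is purely algebraic and reuses the explicit variance computations already carried out in the proof of Theorem~\ref{thm:clt-rw2mc}.
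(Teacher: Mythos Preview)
Your proposal is correct and follows essentially the same route as the paper: the paper writes $S_n/n-\Lambda_p=\boldsymbol{u}^\top(\boldsymbol{\Gamma}_n-\boldsymbol{\Gamma}^{(p)})$, decomposes $\boldsymbol{\Gamma}_n-\boldsymbol{\gamma}=\boldsymbol{M}_{n}(\boldsymbol{\gamma})+\boldsymbol{\Delta}_{n}(\boldsymbol{\gamma})$ along the eigenbasis of the Jacobian, applies Bercu's martingale quadratic strong law (Theorem~3 of \cite{BERCU2004157}) to each eigen-projection of $\boldsymbol{M}_n$, and controls the remainder $\boldsymbol{\Delta}_n$ and the cross terms exactly as you outline.

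One minor difference worth noting: for part~(a) the paper does not carry out the martingale decomposition and Ces\`aro remainder control by hand as you propose, but instead invokes Pelletier's quadratic strong law for stochastic approximation processes (Theorem~3 of \cite{PELLETIER1998217}) as a black box, which delivers \eqref{eq:mqsl1} directly once the standard regularity conditions on drift, noise and step-size are checked. Your approach would of course also work in that regime (the remainder satisfies $\|\boldsymbol{\Delta}_n\|=O(n^{\lambda^{(1)}_p})$ with $\lambda^{(1)}_p<-1/2$, so $\sum_k R_k^2<\infty$ and the Ces\`aro bound is immediate), but the paper's shortcut avoids having to extend the remainder lemma (Lemma~\ref{lem:4}) to the $\lambda^{(1)}_p<-1/2$ range. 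For part~(b) the paper proceeds exactly as you describe, including the Cauchy--Schwarz-type handling of the bilinear cross term $\boldsymbol{M}_k^\top\boldsymbol{\Delta}_k$.
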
 
 
\section{Related Stochastic Processes} \label{sec:rel_proc}

The literature on probability and statistical physics includes several stochastic processes whose dynamics are closely connected to those of random walks with two memory channels. In this section, we describe a bunch of them, one of which will be important for our paper. The other related models can also be investigated using our techniques, even though we refrain from doing so for the sake of brevity of this paper. 

\subsection{A Polya Type Generalized Urn Model with Two Drawings}

{Following \cite{baur2016elephant}, random walk with two memory channels can be embedded into the following Polya type generalized urn model with two drawings; see~\cite{rwnmc}. This urn scheme consists of the discrete-time evolution of balls of three colors, say, red~($R$), black ($B$), and green ($G$). The urn composition at time $n \geq 0$ is denoted by $U_n = (R_n, B_n, G_n)$, where $R_n$, $B_n$, and $G_n$ are the numbers of red, black and green balls, respectively, at epoch $n$. We start with a random urn composition at time $n = 0$, given by,
\[
   U_0 = \begin{cases}
       (2,0,0) &\text{ with probability } 1/4, \\ 
       (0,2,0) &\text{ with probability } 1/4, \\
       (1,1,0) &\text{ with probability } 1/2.
   \end{cases}
\]

At each epoch, two balls are drawn at random, with replacement, from the urn. Then they are returned to the urn along with another new ball of random color. The distribution of the random color depends on the configurations of the drawn balls. After each draw, there are six possible outcomes: $(R,R)$, $(R,B)$, $(R,G)$, $(B,B)$, $(B,G)$, and $(G,G)$. The probabilities of whether the newly added ball is $R$, $B$, or $G$ corresponding to each of the six outcomes are summarized in the following mean replacement matrix:
\[
\begin{blockarray}{cccc}
& R & B & G \\
\begin{block}{c(ccc)}
  (R,R) & p^2 & (1-p)^2 & 2p(1-p) \\
  (R,B) & p(1-p) & p(1-p) & p^2 + (1-p)^2 \\
  (R,G) & p & 1-p & 0 \\
  (B,B) & (1-p)^2 & p^2 & 2p(1-p) \\
  (B,G) & 1-p & p & 0 \\
  (G,G) & 0 & 0 & 1 \\
\end{block}
\end{blockarray}.
 \]
If we associate the $+1$ steps with red balls, the $-1$ steps with black balls, and the zero steps with green balls, then the relation 
\[(S_{n+2} : n \geq 0) \stackrel{d}{=} (R_n - B_n : n \geq 0)\]}
describes the \cite{baur2016elephant}-type embedding mentioned above. We believe that this urn process is of independent interest and can definitely be analyzed using our method.

\subsection{Multidimensional Generalized Elephant Random Walk}\label{eg:mgerw}

The random walk with two memory channels is a special case of the multidimensional generalized elephant random walk proposed by \cite{gerw}. In order to describe this model, let us denote by $[k]$ the set $\{1, 2, \ldots, k\}$ for any positive integer $k$. The {multidimensional generalized elephant random walk} \(\left(\boldsymbol{\xi}_n\right)_{n\geq0}\) is a very general random walk model on \(\mathbb{R}^l\) for some \(l \in \mathbb{N}\). At each time \(n \geq 0\), the position \(\boldsymbol{\xi}_n\) of the walker is given by a linear transformation of the position \(\boldsymbol{\widetilde{\xi}}_n\) of an auxiliary random walk \(\left(\boldsymbol{\widetilde{\xi}}_n\right)_{n\geq0}\) on \([0,\infty)^m\) for \(m \in \mathbb{N}\), according to $\boldsymbol{\xi}_n = \boldsymbol{\A} \boldsymbol{\widetilde{\xi}}_n + n\boldsymbol{\beta}$, where \(\boldsymbol{\A}\) is a nonrandom \(l \times l\) matrix and \(\boldsymbol{\beta}\) is a nonrandom vector in \(\mathbb{R}^l\). The matrix \(\boldsymbol{\A}\) maps the auxiliary position \(\boldsymbol{\widetilde{S}}_n\) into \(\mathbb{R}^l\), while the vector \(n\boldsymbol{\beta}\) introduces a time-dependent drift. 

The auxiliary walk \((\boldsymbol{\widetilde{S}}_n)_{n\geq0}\) is described with several parameters, as follows. For some \(u \in [m+1]\), let \(0 = i_0 < i_1 < i_2 < \cdots < i_{u-2} < i_{u-1} \leq i_u = m\) and define the sets \(v_i = \{i_{j-1}+1, \ldots, i_j\}\) for \(j \in [u]\) with the convention that if \(i_{u-1} = i_u\), then \(v_r = \emptyset\). 
Initiating from the origin \(\boldsymbol{\widetilde{\xi}}_0 = \boldsymbol{0}_m\), the auxiliary walk takes the first step \(\boldsymbol{\widetilde{\xi}}_1 = \boldsymbol{\widetilde{Y}}_1\), a random vector taking values in \(E_m \subseteq [0,\infty)^m\), $E_m$ being an \(m\)-dimensional (possibly unbounded) rectangle with the origin as a corner. Subsequently, for each \(n \geq 1\), $\boldsymbol{\widetilde{\xi}}_{n+1} = \boldsymbol{\widetilde{\xi}}_n + \boldsymbol{\widetilde{Y}}_{n+1}$,
where, conditioned on \(\boldsymbol{\widetilde{Y}}_1, \ldots, \boldsymbol{\widetilde{Y}}_n\), the $(n+1)$-th step \(\boldsymbol{\widetilde{Y}}_{n+1}\) is defined by
\begin{align*}
    \boldsymbol{\widetilde{Y}}_{n+1} = \begin{cases}
        \boldsymbol{Z}_{v_1, n+1} & \text{with probability } {f}_1\left(\boldsymbol{\widetilde{\xi}}_n / n\right), \\
        \boldsymbol{Z}_{v_2, n+1} & \text{with probability } {f}_2\left(\boldsymbol{\widetilde{\xi}}_n / n\right), \\
        \quad\vdots \\
        \boldsymbol{Z}_{v_{r-1}, n+1} & \text{with probability } {f}_{r-1}\left(\boldsymbol{\widetilde{\xi}}_n / n\right), \\
        \boldsymbol{Z}_{v_r, n+1} & \text{with remaining probability } 1 - \sum_{i=1}^{r-1} {f}_i\left(\boldsymbol{\widetilde{\xi}}_n / n\right).
    \end{cases}
\end{align*}
Here \(\boldsymbol{Z}_1, \boldsymbol{Z}_2, \ldots\) are i.i.d. random vectors taking values in \(E_m\), and for each \(n \geq 1\) and \(i \in [u]\), \(\boldsymbol{Z}_{v_i, n+1}\) denotes the vector formed by retaining only the coordinates of \(\boldsymbol{Z}_{n+1}\) in the set \(v_i\), with zeros elsewhere. The functions \(f_i: E_m \rightarrow [0,1]\), for \(i \in [u-1]\), satisfy \(\sum_{i=1}^{u-1} f_i(\boldsymbol{x}) < 1\) for all \(\boldsymbol{x} \in E_m\).

Random walk with two memory channels is a special case of the multidimensional generalized elephant random walk with $l = 1$, $m = 2$, $u = 3$, $i_1 = 1$, $i_2 = i_3 = 2$, $\boldsymbol{\A}_{1\times 2} = \begin{bmatrix} 1 & -1 \end{bmatrix}$, ${\beta}_{1\times 1} = 0$, $\boldsymbol{Z}_1 \equiv \begin{pmatrix}
    1 & 1
\end{pmatrix}^{\top}$, $E_2 = [0,1]^2$ and the functions $f_1, f_2$ given by
\begin{align*}
    f_1\left(x_1, x_2\right) &= \{(1-p)x_1 + px_2 - 1\}^2 - (1-x_1-x_2)^2, \quad (x_1, x_2) \in [0,1]^2,
    \\ f_2\left(x_1, x_2\right) &= \{(1-p)x_2 + px_1 - 1\}^2 - (1-x_1-x_2)^2, \quad (x_1, x_2) \in [0,1]^2.
\end{align*}
Multidimensional generalized elephant random walk will therefore play an important role in this paper.

\subsection{Elephant Random Walk with Multiple Extractions}

Elephant random walk with multiple extractions is a similar model, introduced in \cite{fran}. This is a generalization of the classical elephant random walk in which the $(n+1)$-th step $Z_{n+1}$ is determined as follows. First, the elephant selects \( k \) independent previous steps at random from \( Z_1, \ldots, Z_n \), where \( k \) is an odd integer. If the sum of these selected steps is positive, indicating a majority of positive steps, then \( Z_{n+1} \) is set to \( +1 \) with probability \( p \), and \( -1 \) with probability \( 1 - p \). On the other hand, if the sum is negative, \( Z_{n+1} \) takes the value \( -1 \) with probability \( p \), and \( +1 \) with probability \( 1 - p \). The classical elephant random walk is recovered when \( k = 1 \).

\section{Applications of Stochastic Approximation}\label{sec:sa}

In this section, we derive a relation of the random walk with two memory channels with a class of stochastic processes known as stochastic approximation, using which the asymptotic properties of the former are established. For a overview of stochastic approximation and its connection with random walks, see, e.g. \cite{harold1997stochastic} and \cite{gerw}, respectively. The abovementioned relation is through $\boldsymbol{\Gamma}_n := \begin{pmatrix}{{{S}^+_{n}}/{n}} & {{S}^-_{n}}/{n}\end{pmatrix}^\top$, where ${S}^{\pm}_{n} = \# \pm1 \text{ steps of the walk till time } n$, $n \geq 1$. The following result shows that $\left(\boldsymbol{\Gamma}_n\right)_{n \geq 2}$ is a stochastic approximation process.

\begin{lemma}\label{lem:sa}
    The stochastic process $\boldsymbol{\Gamma}_n$ satisfy the recursion
\begin{align}\label{eq:sa-main}
    \boldsymbol{\Gamma}_{n+1} = \boldsymbol{\Gamma}_{n} + \frac{\boldsymbol{h}_p(\boldsymbol{\Gamma}_{n}) + \boldsymbol{e}_{n+1}}{n+1}, 
    \quad n \geq 2,
\end{align}
where $\boldsymbol{e}_{n+1}$ is a martingale-difference w.r.t.\ the filtration $\mathcal{\boldsymbol{G}}_{n} = \sigma\{\boldsymbol{\Gamma}_1, \ldots, \boldsymbol{\Gamma}_n\}$ and $\boldsymbol{h}_p : [0,1]^2 \to [0,1]^2$ is the following quadratic function
\begin{align*}
    \boldsymbol{h}_p\left(\boldsymbol{x}\right) := \begin{pmatrix}
        \left((1-p)x_1 + px_2 - 1\right)^2 - (1-x_1-x_2)^2 - x_1\\\left((1-p)x_2 + px_1 - 1\right)^2 - (1-x_1-x_2)^2 - x_2
    \end{pmatrix}.
\end{align*}
In other words, $\boldsymbol{\Gamma}_n$ is a stochastic approximation process with drift $\boldsymbol{h}_p$, random noise $\boldsymbol{e}_{n+1}$ and step-size ${1}/{(n+1)}$. For some constant $C > 0$, 
we also have, almost surely, \begin{align}\label{eq:ineq}\mathbb{E}\left(\|\boldsymbol{h}_p(\boldsymbol{\Gamma}_n) + \boldsymbol{e}_{n+1}\|^2 \mid \mathcal{\boldsymbol{G}}_{n}\right) \leq C(1 + \|\boldsymbol{\Gamma}_n\|^2).\end{align}
Additionally, $\boldsymbol{e}_{n+1}$ satisfies the following Lindeberg condition 
    \begin{align}\label{eq:lind}
    \frac{1}{n}\sum_{k=2}^{n}\mathbb{E}\left( \|\boldsymbol{e}_{k+1}\|^2 \mathbf{1}\left\{\|\boldsymbol{e}_{k+1}\| \geq \delta\sqrt{n}\right\}\mid \mathcal{\boldsymbol{G}}_{k}\right) \stackrel{a.s.}{\to} 0, \quad \delta > 0.
    \end{align} 
    Moreover, for all $\epsilon > 0$ such that, almost surely, 
    \begin{align}\label{eq:sup}
        \sup_{n \geq 2} \mathbb{E}\left( \|\boldsymbol{e}_{n+1}\|^{2+\epsilon} \mid \mathcal{\boldsymbol{G}}_{n}\right) < \infty.
\end{align}
\end{lemma}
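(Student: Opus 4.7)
The plan is to unpack each part of the lemma from the one-step update $S^{\pm}_{n+1} = S^{\pm}_n + \bone_{[X_{n+1} = \pm 1]}$, where $S^{\pm}_n$ denotes the number of $\pm 1$ steps up to time $n$. Dividing by $n+1$ and rearranging gives
\[
\boldsymbol{\Gamma}_{n+1} = \boldsymbol{\Gamma}_n + \frac{1}{n+1}\Big[\big(\bone_{[X_{n+1}=+1]},\,\bone_{[X_{n+1}=-1]}\big)^{\top} - \boldsymbol{\Gamma}_n\Big].
\]
Letting $\boldsymbol{e}_{n+1}$ be the $\mathcal{\boldsymbol{G}}_n$-centered version of the bracketed quantity and identifying $\boldsymbol{h}_p(\boldsymbol{\Gamma}_n)$ with the corresponding conditional mean minus $\boldsymbol{\Gamma}_n$ yields \eqref{eq:sa-main}, and the martingale-difference property of $\boldsymbol{e}_{n+1}$ is immediate by construction.

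The computational heart of the argument is to derive the explicit form of $\boldsymbol{h}_p$. Writing $\boldsymbol{\Gamma}_n = (x_1, x_2)^{\top}$ and conditioning on $\mathcal{\boldsymbol{G}}_n$, each retrieved step $X^{(i)}_{n+1} = \alpha_{i,n+1} X_{U_{i,n+1}}$ takes values $+1, -1, 0$ with respective conditional probabilities $\pi^+ := p x_1 + (1-p)x_2$, $\pi^- := (1-p)x_1 + p x_2$ and $\pi^0 := 1 - x_1 - x_2$, which I would obtain by splitting according to the sign of $\alpha_{i,n+1}$ and the value of $X_{U_{i,n+1}}$. Using the independence of the two channels together with the rule that $X_{n+1}=+1$ iff neither $X^{(j)}_{n+1}$ is $-1$ but at least one is $+1$, the complementary-event shortcut gives
\[
\P(X_{n+1} = +1 \mid \mathcal{\boldsymbol{G}}_n) = (1 - \pi^-)^2 - (\pi^0)^2 = \big((1-p)x_1 + p x_2 - 1\big)^2 - (1 - x_1 - x_2)^2,
\]
and a symmetric calculation handles $X_{n+1}=-1$. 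Subtracting $x_1$ and $x_2$ from the respective conditional means then recovers the claimed $\boldsymbol{h}_p$.

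The moment conditions \eqref{eq:ineq}, \eqref{eq:lind} and \eqref{eq:sup} follow almost trivially from uniform boundedness. Since $\boldsymbol{\Gamma}_n \in [0,1]^2$ always and the indicator vector $(\bone_{[X_{n+1}=+1]}, \bone_{[X_{n+1}=-1]})^{\top}$ also lies in $[0,1]^2$, there is a deterministic $M > 0$ with $\|\boldsymbol{e}_{n+1}\| \leq M$ and $\|\boldsymbol{h}_p(\boldsymbol{\Gamma}_n)\| \leq M$ almost surely. The quadratic bound \eqref{eq:ineq} then holds with any constant at least $(2M)^2$; the Lindeberg truncation in \eqref{eq:lind} is identically zero once $n > M^2/\delta^2$, so the sum and its almost-sure limit are zero; and $\sup_n \E(\|\boldsymbol{e}_{n+1}\|^{2+\epsilon} \mid \mathcal{\boldsymbol{G}}_n) \leq M^{2+\epsilon}$ gives \eqref{eq:sup} for every $\epsilon > 0$.

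The only nonroutine step I foresee is the case enumeration for the conditional probabilities in the middle paragraph: one must correctly match each of the nine joint outcomes of $(X^{(1)}_{n+1}, X^{(2)}_{n+1})$ to the truncation rule defining $X_{n+1}$ from Section~\ref{sec:model}, and then collapse the resulting sum into the compact quadratic $((1-p)x_1 + p x_2 - 1)^2 - (1 - x_1 - x_2)^2$ using the complementary events ``no channel retrieves $-1$'' and ``both channels retrieve $0$''. Once this algebra is in place, everything else is bookkeeping on a compact state space.
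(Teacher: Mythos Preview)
Your proposal is correct and follows essentially the same approach as the paper: both derive the recursion from the one-step update $(n+1)\boldsymbol{\Gamma}_{n+1}=n\boldsymbol{\Gamma}_n+(\bone_{[X_{n+1}=+1]},\bone_{[X_{n+1}=-1]})^{\top}$, define $\boldsymbol{e}_{n+1}$ as the $\mathcal{\boldsymbol{G}}_n$-centered increment, and obtain \eqref{eq:ineq}--\eqref{eq:sup} from the uniform boundedness of everything in sight. Your write-up is in fact more explicit than the paper's, which simply asserts the transition probabilities in \eqref{eq-epochn} without the complementary-event derivation you give for $\boldsymbol{h}_p$.
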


\begin{proof}
The dynamics of $\boldsymbol{\Gamma}_{n}$ are as follows -- 
\begin{align*}
2\boldsymbol{\Gamma}_2 = \begin{cases}
    \begin{pmatrix}
    2 & 0
\end{pmatrix}^{\top}, &\text{ with probability } \frac{1}{4} \\
\begin{pmatrix}
    1 & 1
\end{pmatrix}^{\top}, &\text{ with probability } \frac{1}{2} \\
\begin{pmatrix}
    0 & 2
\end{pmatrix}^{\top}, &\text{ with probability } \frac{1}{4},
\end{cases}
\end{align*}
and for $n \geq 2$, 
\begin{align}\label{eq-epochn}
(n+1)\boldsymbol{\Gamma}_{n+1} = n\boldsymbol{\Gamma}_{n} + \begin{cases}
    \begin{pmatrix}
    1 & 0
\end{pmatrix}^{\top}, &\text{ with probability } \begin{pmatrix}
    1 & 0
\end{pmatrix}^{\top}\left(\boldsymbol{h}_p\left(\boldsymbol{\Gamma}_{n}\right) + \boldsymbol{\Gamma}_{n}\right) \\
\begin{pmatrix}
    0 & 1
\end{pmatrix}^{\top}, &\text{ with probability } \begin{pmatrix}
    0 & 1
\end{pmatrix}^{\top}\left(\boldsymbol{h}_p\left(\boldsymbol{\Gamma}_{n}\right) + \boldsymbol{\Gamma}_{n}\right) \\
\begin{pmatrix}
    0 & 0
\end{pmatrix}^{\top}, &\text{ with probability } 1 - \begin{pmatrix}
    1 & 1
\end{pmatrix}^{\top}\left(\boldsymbol{h}_p\left(\boldsymbol{\Gamma}_{n}\right) + \boldsymbol{\Gamma}_{n}\right).
\end{cases}
\end{align}
For $n \geq 2$, define $\boldsymbol{e}_{n+1} := (n+1)\left(\boldsymbol{\Gamma}_{n+1} - \boldsymbol{\Gamma}_{n}\right) - \boldsymbol{h}_p\left(\boldsymbol{\Gamma}_{n}\right)$ so that $\boldsymbol{e}_{n+1}$ is a martingale-difference w.r.t.\ the filtration $\mathcal{\boldsymbol{G}}_{n} = \sigma\{\boldsymbol{\Gamma}_1, \ldots, \boldsymbol{\Gamma}_n\}$ and \eqref{eq:sa-main} holds. From \eqref{eq-epochn}, the definition of $\boldsymbol{h}_p$ and $\boldsymbol{e}_{n+1}$, \eqref{eq:ineq}, \eqref{eq:lind} and \eqref{eq:sup} follow.
\end{proof}

We denote the Jacobian of $\boldsymbol{h}_p$ at $\boldsymbol{x} \in [0,1]^2$ by $\boldsymbol{J}_p\left(\boldsymbol{x}\right)$. For $i = 1,2$, $\lambda^{(i)}_p\left(\boldsymbol{x}\right)$ and $\boldsymbol{\nu}^{(i)}_p\left(\boldsymbol{x}\right)$ denote the $i^{th}$ largest eigenvalue of $\boldsymbol{J}_p\left(\boldsymbol{x}\right)$ and the corresponding eigenvector, respectively. 
The irrational conjugate of ${x} \in [0,1]$ is denoted by $\widebar{{x}}$ and $\widebar{\boldsymbol{x}} := \begin{pmatrix}
    \bar{x}_1 & \bar{x}_2
\end{pmatrix}$. We next establish the almost sure convergence of $\boldsymbol{\Gamma}_n$ for $p \neq 7/8$. The same for $p = 7/8$ is still open. 

\begin{theorem}\label{thm:sa-slln}
    For $p \neq 7/8$, $\boldsymbol{\Gamma}_n\stackrel{a.s.}{\to} \boldsymbol{\Gamma}^{(p)}$. For $p < 7/8$, $\boldsymbol{\Gamma}^{(p)}$ is degenerate at $\boldsymbol{\gamma}_0 := \begin{pmatrix} {1}/{3} & {1}/{3}\end{pmatrix}^{\top}$. For $p > 7/8$, $\boldsymbol{\Gamma}^{(p)}$ takes the values $\boldsymbol{\gamma}_p$ and $\widebar{\boldsymbol{\gamma}}_p$, where
\begin{align*}
    \boldsymbol{\gamma}_p = \begin{pmatrix}
       \frac{8p^2-10p+3+\sqrt{32p^2-52p+21
   }}{2(2p-1)^2} & \frac{8p^2-10p+3-\sqrt{32p^2-52p+21
   }}{2(2p-1)^2} 
    \end{pmatrix}.
\end{align*}
Further, for $\boldsymbol{\gamma}$ in the support of $\boldsymbol{\Gamma}^{(p)}$, on the event $\left\{\boldsymbol{\Gamma}^{(p)} = \boldsymbol{\gamma}\right\}$, we have 
\begin{align}\label{eq:sigma-conv}
    \mathbb{E}\left(\boldsymbol{e}_{n+1}\boldsymbol{e}^{\top}_{n+1} \mid \mathcal{\boldsymbol{G}}_{n}\right) \stackrel{a.s.}{\to} \boldsymbol{\sigma}\left(\boldsymbol{\gamma}\right) := \begin{pmatrix}
      \gamma_1 - \gamma_1^2  & -\gamma_1\gamma_2\\  -\gamma_1\gamma_2 & \gamma_2 - \gamma_2^2
    \end{pmatrix}.
\end{align}
\end{theorem}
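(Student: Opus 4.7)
The plan is to treat \eqref{eq:sa-main} as a compactly-supported stochastic approximation scheme with drift $\boldsymbol{h}_p$ and apply the ODE method, reducing the theorem to (i) locating the zeros of $\boldsymbol{h}_p$, (ii) classifying their stability under the linearised flow, and (iii) ruling out convergence to the unstable ones. Exploiting the coordinate-swap symmetry $h_{p,1}(x_1,x_2) = h_{p,2}(x_2,x_1)$, a direct computation gives
\[
h_{p,1}(\boldsymbol{x}) - h_{p,2}(\boldsymbol{x}) = (x_1 - x_2)\bigl[(1-2p)(x_1 + x_2 - 2) - 1\bigr],
\]
so every equilibrium lies either on the diagonal $\{x_1 = x_2\}$ or on the line $\{x_1 + x_2 = (4p-3)/(2p-1)\}$. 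On the diagonal, $h_{p,1}(t,t) = t(1-3t)$, yielding $(0,0)$ and $\boldsymbol{\gamma}_0 = (1/3, 1/3)^\top$. Off the diagonal, substituting the sum constraint into $h_{p,1} + h_{p,2} = 0$ produces a quadratic whose solutions, after routine algebra, are precisely the symmetric pair $\boldsymbol{\gamma}_p$ and $\widebar{\boldsymbol{\gamma}}_p$ of the theorem; nonnegativity of the discriminant $32p^2 - 52p + 21$ together with $\boldsymbol{\gamma}_p \in [0,1]^2$ forces $p > 7/8$.

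For the stability classification, differentiation of $\boldsymbol{h}_p$ at $\boldsymbol{\gamma}_0$ gives the symmetric matrix
\[
\boldsymbol{J}_p(\boldsymbol{\gamma}_0) = \frac{1}{3}\begin{pmatrix} 4p-5 & 2-4p \\ 2-4p & 4p-5 \end{pmatrix},
\]
with eigenvalues $-1$ (eigenvector $(1,1)^\top$) and $(8p-7)/3$ (eigenvector $(1,-1)^\top$). Hence $\boldsymbol{\gamma}_0$ is a linearly stable attractor for $p < 7/8$ and a hyperbolic saddle for $p > 7/8$, with a pitchfork bifurcation exactly at the critical value $p = 7/8$ (which also explains why that case is out of reach here). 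An analogous computation at $(0,0)$ gives eigenvalues $1$ and $4p-3$, so the origin is always repelling. For $p > 7/8$, I would verify by a direct (though slightly tedious) calculation that both eigenvalues of $\boldsymbol{J}_p(\boldsymbol{\gamma}_p)$ are strictly negative, so that $\boldsymbol{\gamma}_p$ and $\widebar{\boldsymbol{\gamma}}_p$ are the only stable attractors.

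With the phase portrait in hand, the bounds \eqref{eq:ineq}-\eqref{eq:sup} and the compactness of the state space $\{(x_1, x_2) \in [0,1]^2 : x_1 + x_2 \leq 1\}$ allow invoking the standard ODE convergence theorem for stochastic approximation (e.g.\ as in \cite{harold1997stochastic}), yielding almost-sure convergence of $\boldsymbol{\Gamma}_n$ to the (finite) zero set of $\boldsymbol{h}_p$, and hence to a single (random) equilibrium. For $p < 7/8$, the only attractor is $\boldsymbol{\gamma}_0$, delivering the degenerate limit. For $p > 7/8$, a Brandière-Duflo / Pemantle / Tarrès-type non-convergence theorem rules out both the origin and the saddle $\boldsymbol{\gamma}_0$: the required input is a uniform lower bound on the conditional variance of $\boldsymbol{e}_{n+1}$ along the unstable eigen-direction, which here follows from the positive-definiteness of $\boldsymbol{\sigma}(\boldsymbol{\gamma}_0) = \tfrac{1}{9}\begin{pmatrix} 2 & -1 \\ -1 & 2 \end{pmatrix}$ (and similarly at the origin). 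The limit therefore lies in $\{\boldsymbol{\gamma}_p, \widebar{\boldsymbol{\gamma}}_p\}$, each value being attained with positive probability by the $x_1 \leftrightarrow x_2$ symmetry.

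Finally, \eqref{eq:sigma-conv} is read off directly from \eqref{eq-epochn}: the conditional law of the new increment $(n+1)\boldsymbol{\Gamma}_{n+1} - n\boldsymbol{\Gamma}_n$ is a three-point distribution on $\{(1,0)^\top, (0,1)^\top, (0,0)^\top\}$ with probabilities $p_i := h_{p,i}(\boldsymbol{\Gamma}_n) + \Gamma_{n,i}$ and $1 - p_1 - p_2$. Since the two non-trivial atoms are mutually exclusive, its conditional covariance matrix is exactly $\boldsymbol{\sigma}\bigl(\boldsymbol{h}_p(\boldsymbol{\Gamma}_n) + \boldsymbol{\Gamma}_n\bigr)$, which is also $\mathbb{E}(\boldsymbol{e}_{n+1}\boldsymbol{e}_{n+1}^\top \mid \mathcal{\boldsymbol{G}}_n)$. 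Continuity of $\boldsymbol{\sigma}$ and $\boldsymbol{h}_p$, combined with $\boldsymbol{h}_p(\boldsymbol{\gamma}) = 0$, then passes the limit through on each event $\{\boldsymbol{\Gamma}^{(p)} = \boldsymbol{\gamma}\}$. The \textbf{main obstacle} in this program is the non-convergence-to-saddle step for $p > 7/8$: beyond mere positive-definiteness of $\boldsymbol{\sigma}(\boldsymbol{\gamma}_0)$, one must verify the precise quantified noise-excitation hypotheses (a conditional Lindeberg-type bound with uniform ellipticity transverse to the stable manifold) required by the Pemantle-type theorem; the remaining steps reduce to routine algebra.
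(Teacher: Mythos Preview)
Your approach is essentially the paper's: both reduce to the ODE method, locate the zeros of $\boldsymbol{h}_p$, classify their linear stability via the Jacobian eigenvalues (your computations match the paper's \eqref{eq:evalues}), invoke a Pemantle-type nonconvergence result to rule out the unstable equilibria, and read off \eqref{eq:sigma-conv} by continuity from \eqref{eq-epochn}. In fact the paper dispatches what you call the ``main obstacle'' by a direct citation to \cite{pemantle1990nonconvergence} (and \cite{kaniovski1995strong} for the convergence), without any custom verification of noise-excitation hypotheses, so you are overestimating the difficulty there. (Minor slip: for $p<3/4$ the origin is a saddle, not a repellor, since $4p-3<0$; but one positive eigenvalue is all Pemantle needs.)

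The one genuine step your proposal skips, and which the paper supplies, is the exclusion of periodic orbits for the planar ODE $\dot{\boldsymbol{x}} = \boldsymbol{h}_p(\boldsymbol{x})$. The standard ODE-method theorems (Kushner--Yin, Bena\"im, etc.) deliver almost-sure convergence only to an internally chain-transitive set of the flow, which in two dimensions could a priori be a limit cycle or a heteroclinic loop rather than an equilibrium; your jump from ``standard ODE convergence theorem'' directly to ``the (finite) zero set of $\boldsymbol{h}_p$'' is therefore unjustified as written. The paper closes this gap by verifying Dulac's criterion for $\boldsymbol{h}_p$ and then invoking Poincar\'e--Bendixson, forcing every bounded $\omega$-limit set to consist of equilibria. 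This is a short check but a real one, and should be added to your argument.
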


\begin{proof}
The asymptotic behavior of $\boldsymbol{\Gamma}_n$ is determined by the ordinary differential equation 
\begin{align}\label{eq:ode}
   \frac{d\boldsymbol{x}(t)}{dt} = \boldsymbol{h}_p(\boldsymbol{x}(t)), \quad t \geq 0.
\end{align}
It is easy to verify that $\boldsymbol{h}_p$ satisfies the {Dulac's Criterion} and so {Poincare-Bendixon Theorem} implies that trajectories of \eqref{eq:ode} converge to the zeros of $\boldsymbol{h}_p$ as $t \to \infty$.
If $p<7/8$, $\boldsymbol{h}_p$ has two zeros in $[0,1]^2$ -- $\boldsymbol{0}$ and $\boldsymbol{\gamma}_0$. 
If $p > 7/8$, $\boldsymbol{h}_p$ and has four zeros in $[0,1]^2$ -- $\boldsymbol{0}$, $\boldsymbol{\gamma}_0$, $\boldsymbol{\gamma}_p$ and $\widebar{\boldsymbol{\gamma}}_p$. Also, an immidiate calculation shows that
\begin{align}\label{eq:evalues}
     &\lambda^{(1)}_p\left(\boldsymbol{0}\right) = 1, \quad \lambda^{(2)}_p\left(\boldsymbol{0}\right) = 4p-3, \\
     &\lambda^{(1)}_p\left(\boldsymbol{\gamma}_0\right) = \begin{cases} -1,  & \hspace{.8cm} p \leq \frac{1}{2}, \\ -\frac{1}{3}\left(7-8p\right), &\frac{1}{2} < p < \frac{7}{8},\end{cases} \quad \lambda^{(2)}_p\left(\boldsymbol{\gamma}_0\right) = \begin{cases} -\frac{1}{3}\left(7-8p\right),  & \hspace{.8cm} p \leq \frac{1}{2}, \\ -1, &\frac{1}{2} < p < \frac{7}{8},\end{cases} \nonumber \\
      &\lambda^{(1)}_p\left(\boldsymbol{\gamma}_p\right) = \widebar{\lambda}^{(2)}_p\left(\boldsymbol{\gamma}_p\right) = \lambda^{(1)}_p\left(\boldsymbol{\gamma}_p\right) = \widebar{\lambda}^{(2)}_p\left(\boldsymbol{\gamma}_p\right) = \frac{4-5p+\sqrt{-64p^3+161p^2-134p+37}}{2p-1} \nonumber.
\end{align}
From \eqref{eq:evalues}, it follows that, for $p < 7/8$, $\boldsymbol{0}$ is linearly unstable and $\boldsymbol{\gamma}_0$ is linearly stable for \eqref{eq:ode}, whereas for $p > 7/8$, $\boldsymbol{0}$, $\boldsymbol{\gamma}_0$ are linearly unstable and $\boldsymbol{\gamma}_p$, $\widebar{\boldsymbol{\gamma}}_p$ are linearly stable for \eqref{eq:ode} (see, for example, Assumption A2 of \cite{kaniovski1995strong} for the definitions of linearly unstable and linearly stable zeros). Theorem 1 of \cite{pemantle1990nonconvergence} then implies 
\[
   \mathbb{P}\left(\boldsymbol{\Gamma}_n \to \boldsymbol{\gamma_0}\right) = 0, \quad p > \frac{7}{8}, \quad \mathbb{P}\left(\boldsymbol{\Gamma}_n \to \boldsymbol{0}\right) = 0, \quad p \neq \frac{7}{8}.
\]
Consequently, from Theorem 1 of \cite{kaniovski1995strong}, we get that 
\[ \boldsymbol{\Gamma}_n \stackrel{a.s.}{\to} \boldsymbol{\Gamma}^{(p)} \text{ with } \operatorname{supp}\left(\boldsymbol{\Gamma}^{(p)}\right) = \begin{cases}\left\{\boldsymbol{\gamma}_0\right\}, \quad &p < \frac{7}{8}\\\left\{\boldsymbol{\gamma}_p, \widebar{\boldsymbol{\gamma}}_p\right\}, \quad &p > \frac{7}{8}.\end{cases} \]
This concludes the proof.
\end{proof}

Note that for $p < 7/8$,
\begin{align}\label{eq:eienvec1}
    \boldsymbol{\nu}^{(1)}_p\left(\boldsymbol{\gamma}_0\right) = \begin{cases} \begin{pmatrix}
            +1 \\ +1
        \end{pmatrix},  & \hspace{.8cm} p \leq \frac{1}{2}, \\ \begin{pmatrix}
            +1 \\ -1
        \end{pmatrix}, &\frac{1}{2} < p < \frac{7}{8},\end{cases} \quad
    \boldsymbol{\nu}^{(2)}_p\left(\boldsymbol{\gamma}_0\right) = \begin{cases} \begin{pmatrix}
            +1 \\ -1
        \end{pmatrix},  & \hspace{.8cm} p \leq \frac{1}{2}, \\ \begin{pmatrix}
            +1 \\ +1
        \end{pmatrix}, &\frac{1}{2} < p < \frac{7}{8},\end{cases}
\end{align}
and for $p > 7/8$,
\begin{align}\label{eq:eienvec2}
        \boldsymbol{\nu}^{(1)}_p\left(\boldsymbol{\gamma}_p\right) &= \boldsymbol{\nu}^{(1)}_p\left(\widebar{\boldsymbol{\gamma}}_p\right) = \begin{pmatrix}
        +1 \\ -\frac{(1-p)\sqrt{32p^2-52p+21
   }-\sqrt{-64p^3+161p^2-134p+37}}{4-5p+p\sqrt{32p^2-52p+21
   }}   
        \end{pmatrix},
        \\
        {\boldsymbol{\nu}}^{(2)}_p\left(\boldsymbol{\gamma}_p\right) &= \boldsymbol{\nu}^{(2)}_p\left(
            \widebar{\boldsymbol{\gamma}}_p\right) = \begin{pmatrix}
         +1\\-\frac{(1-p)\sqrt{32p^2-52p+21
   }+\sqrt{-64p^3+161p^2-134p+37}}{4-5p+p\sqrt{32p^2-52p+21
   }}   
        \end{pmatrix}.
\end{align}
From \eqref{eq:eienvec1} and \eqref{eq:eienvec2} (respectively, \eqref{eq:evalues}), we get  that $\boldsymbol{\nu}^{(i)}_p\left(\boldsymbol{\Gamma}^{(p)}\right)$ (respectively, $\boldsymbol{\lambda}^{(i)}_p\left(\boldsymbol{\Gamma}^{(p)}\right)$) is actually non-random, for $i = 1,2$ and $p \neq 7/8$. The next result shows the behavior of $\boldsymbol{\Gamma}_n - \boldsymbol{\Gamma}^{(p)}$ when $\lambda^{(1)}_p\left(\boldsymbol{\Gamma}^{(p)}\right) < - \frac{1}{2}$ almost surely.

\begin{theorem}
Let $c_1, c_2 \in \mathbb{R}$ and $p \neq 7/8$ be such that $\lambda^{(1)}_p\left(\boldsymbol{\Gamma}^{(p)}\right) < - \frac{1}{2}$ almost surely. 
 \begin{itemize}
    \item[i)] The following central limit theorem holds - 
    \begin{align}\label{eq:mclt1}  &\sqrt{n}\left\{\left(c_1\boldsymbol{\nu}^{(1)}_p\left(\boldsymbol{\Gamma}^{(p)}\right) + c_2\boldsymbol{\nu}^{(2)}_p\left(\boldsymbol{\Gamma}^{(p)}\right)\right)^\top\left(\boldsymbol{\Gamma}_n - \boldsymbol{\Gamma}^{(p)}\right)\right\} \stackrel{d}{\to} N\left(0, {\Sigma}^{(1)}_{c_1,c_2,p}\right),
    \end{align}
    where
    {\begin{align}\label{eq:sig1}
        {\Sigma}^{(1)}_{c_1,c_2,p} := \left(c_1\boldsymbol{\nu}^{(1)}_p\left(\boldsymbol{\Gamma}^{(p)}\right) + c_2\boldsymbol{\nu}^{(2)}_p\left(\boldsymbol{\Gamma}^{(p)}\right)\right)^{\top}\left\{\int_{0}^{\infty}e^{t\left(\boldsymbol{J}_p\left(\boldsymbol{\Gamma}^{(p)}\right)+\frac{1}{2}\boldsymbol{I}\right)}\sigma\left(\boldsymbol{\Gamma}^{(p)}\right)e^{t\left(\boldsymbol{J}_p\left(\boldsymbol{\Gamma}^{(p)}\right)+\frac{1}{2}\boldsymbol{I}\right)^{\top}} dt\right\}\nonumber\\\times\left(c_1\boldsymbol{\nu}^{(1)}_p\left(\boldsymbol{\Gamma}^{(p)}\right) + c_2\boldsymbol{\nu}^{(2)}_p\left(\boldsymbol{\Gamma}^{(p)}\right)\right).
    \end{align}}
    \item[ii)] For some absolute constant $C > 0$, almost surely  
    \begin{align}\label{eq:mlil11}  &\limsup_{n \to \infty}\sqrt{\frac{n}{2\log\log n}}\left\|\left(c_1\boldsymbol{\nu}^{(1)}_p\left(\boldsymbol{\Gamma}^{(p)}\right) + c_2\boldsymbol{\nu}^{(2)}_p\left(\boldsymbol{\Gamma}^{(p)}\right)\right)^\top\left(\boldsymbol{\Gamma}_n - \boldsymbol{\Gamma}^{(p)}\right)\right\| \leq C,
    \end{align}
    and, almost surely, \begin{align}\label{eq:mlil12}  &\limsup_{n \to \infty}\sqrt{\frac{n}{2\log\log n}}\left\{\left(c_1\boldsymbol{\nu}^{(1)}_p\left(\boldsymbol{\Gamma}^{(p)}\right)\right)^\top\left(\boldsymbol{\Gamma}_n - \boldsymbol{\Gamma}^{(p)}\right)\right\} \\ = - &\liminf_{n \to \infty}\sqrt{\frac{n}{2\log\log n}}\left\{\left(c_1\boldsymbol{\nu}^{(1)}_p\left(\boldsymbol{\Gamma}^{(p)}\right)\right)^\top\left(\boldsymbol{\Gamma}_n - \boldsymbol{\Gamma}^{(p)}\right)\right\} = \sqrt{{\Sigma}^{(1)}_{c_1,0,p}}, \nonumber
    \end{align}
    and, also, almost surely, \begin{align}\label{eq:mlil13}  &\limsup_{n \to \infty}\sqrt{\frac{n}{2\log\log n}}\left\{\left(c_2\boldsymbol{\nu}^{(2)}_p\left(\boldsymbol{\Gamma}^{(p)}\right)\right)^\top\left(\boldsymbol{\Gamma}_n - \boldsymbol{\Gamma}^{(p)}\right)\right\} \\ = - &\liminf_{n \to \infty}\sqrt{\frac{n}{2\log\log n}}\left\{\left(c_2\boldsymbol{\nu}^{(2)}_p\left(\boldsymbol{\Gamma}^{(p)}\right)\right)^\top\left(\boldsymbol{\Gamma}_n - \boldsymbol{\Gamma}^{(p)}\right)\right\} = \sqrt{{\Sigma}^{(1)}_{0,c_2,p}}. \nonumber
    \end{align}
    \item[iii)] The following quadratic strong law holds almost surely -  
    \begin{align}\label{eq:mqsl1}  &\lim_{n \to \infty} \frac{1}{\log n}\sum_{k=2}^n\left\{\left(c_1\boldsymbol{\nu}^{(1)}_p\left(\boldsymbol{\Gamma}^{(p)}\right) + c_2\boldsymbol{\nu}^{(2)}_p\left(\boldsymbol{\Gamma}^{(p)}\right)\right)^\top\left(\boldsymbol{\Gamma}_k - \boldsymbol{\Gamma}^{(p)}\right)\right. \\ &\hspace{3cm}\times\left.\left(\boldsymbol{\Gamma}_k - \boldsymbol{\Gamma}^{(p)}\right)^{\top}\left(c_1\boldsymbol{\nu}^{(1)}_p\left(\boldsymbol{\Gamma}^{(p)}\right) + c_2\boldsymbol{\nu}^{(2)}_p\left(\boldsymbol{\Gamma}^{(p)}\right)\right)\right\} = {\Sigma}^{(1)}_{c_1,c_2,p}. \nonumber
    \end{align}
    \end{itemize}    
\end{theorem}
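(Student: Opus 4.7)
The strategy is to reduce all three assertions to the classical central limit theorem, law of the iterated logarithm and quadratic strong law for stochastic approximation processes in the \emph{fast} regime, which applies precisely when the top eigenvalue of the drift's Jacobian at the limit has real part strictly below $-1/2$. I start by linearizing the recursion \eqref{eq:sa-main} about $\boldsymbol{\Gamma}^{(p)}$. Writing $\boldsymbol{\Delta}_n := \boldsymbol{\Gamma}_n - \boldsymbol{\Gamma}^{(p)}$ and using $\boldsymbol{h}_p\left(\boldsymbol{\Gamma}^{(p)}\right) = \boldsymbol{0}$ together with the fact that $\boldsymbol{h}_p$ is a polynomial of degree two, a Taylor expansion gives
\begin{equation*}
\boldsymbol{\Delta}_{n+1} = \left(\boldsymbol{I} + \frac{\boldsymbol{J}_p\left(\boldsymbol{\Gamma}^{(p)}\right)}{n+1}\right)\boldsymbol{\Delta}_n + \frac{\boldsymbol{e}_{n+1}}{n+1} + \frac{\boldsymbol{r}_n}{n+1},
\end{equation*}
with $\|\boldsymbol{r}_n\| = O(\|\boldsymbol{\Delta}_n\|^2)$. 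Since by Theorem~\ref{thm:sa-slln} the random limit $\boldsymbol{\Gamma}^{(p)}$ has at most two atoms, on each of which the Jacobian, its eigenvalues, its eigenvectors and the asymptotic noise covariance $\boldsymbol{\sigma}\left(\boldsymbol{\Gamma}^{(p)}\right)$ from \eqref{eq:sigma-conv} all become non-random, I would argue conditionally on $\{\boldsymbol{\Gamma}^{(p)} = \boldsymbol{\gamma}\}$ and then unconditioning is immediate because the candidate limiting variance $\Sigma^{(1)}_{c_1,c_2,p}$ has the same expression on both atoms (the symmetry $\boldsymbol{\gamma}_p \leftrightarrow \bar{\boldsymbol{\gamma}}_p$ preserves it).

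On $\{\boldsymbol{\Gamma}^{(p)} = \boldsymbol{\gamma}\}$ the hypothesis $\lambda^{(1)}_p(\boldsymbol{\gamma}) < -1/2$ makes $\boldsymbol{J}_p(\boldsymbol{\gamma}) + \tfrac{1}{2}\boldsymbol{I}$ a stable matrix, so the Lyapunov equation
\begin{equation*}
\left(\boldsymbol{J}_p(\boldsymbol{\gamma}) + \tfrac{1}{2}\boldsymbol{I}\right)\boldsymbol{V} + \boldsymbol{V}\left(\boldsymbol{J}_p(\boldsymbol{\gamma}) + \tfrac{1}{2}\boldsymbol{I}\right)^{\top} = -\boldsymbol{\sigma}(\boldsymbol{\gamma})
\end{equation*}
admits the absolutely convergent integral solution that appears in \eqref{eq:sig1}. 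For part (i), I would then invoke a standard multivariate CLT for stochastic approximation (see, e.g., \cite{harold1997stochastic}): the Lindeberg condition \eqref{eq:lind}, the growth bound \eqref{eq:ineq}, the uniform $(2+\epsilon)$-moment control \eqref{eq:sup} and the conditional covariance convergence \eqref{eq:sigma-conv} are all in hand from Lemma~\ref{lem:sa} and Theorem~\ref{thm:sa-slln}, so conditional on $\{\boldsymbol{\Gamma}^{(p)} = \boldsymbol{\gamma}\}$ one obtains $\sqrt{n}\,\boldsymbol{\Delta}_n \stackrel{d}{\to} N(\boldsymbol{0}, \boldsymbol{V})$; projecting onto the deterministic direction $c_1\boldsymbol{\nu}^{(1)}_p(\boldsymbol{\gamma}) + c_2\boldsymbol{\nu}^{(2)}_p(\boldsymbol{\gamma})$ yields \eqref{eq:mclt1}. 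For part (ii) I would apply the corresponding compact LIL for stochastic approximation (Pelletier-type) together with Stout's martingale LIL for the cumulative noise $\sum_k \boldsymbol{e}_{k+1}/k$ to get \eqref{eq:mlil12}--\eqref{eq:mlil13}, from which \eqref{eq:mlil11} follows by the triangle inequality. Part (iii) follows from a quadratic strong law for stochastic approximation in the style of Chaabane--Maaouia and Bercu, which requires precisely the eigenvalue hypothesis assumed here together with the moment conditions already verified.

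The main technical obstacle will be controlling the quadratic remainder $\boldsymbol{r}_n$ so that it is negligible on the $1/\sqrt{n}$ scale relevant for \eqref{eq:mclt1}--\eqref{eq:mqsl1}. For this I would first bootstrap a preliminary almost-sure rate $\|\boldsymbol{\Delta}_n\| = O(n^{-\rho})$ for some $\rho > 1/4$ via a Lyapunov-function argument on the linearized dynamics, exploiting the strictly positive gap between $-\operatorname{Re}\lambda^{(1)}_p(\boldsymbol{\gamma})$ and $1/2$. A secondary subtlety is that for $p$ in part of $(7/8, (113+\sqrt{97})/128)$ the eigenvalues of $\boldsymbol{J}_p(\boldsymbol{\gamma}_p)$ are complex conjugates (visible from the sign of the discriminant under the square root in \eqref{eq:evalues}), so the spectral decomposition of the linearization has to be carried out in $\mathbb{C}^2$; nevertheless the integral in \eqref{eq:sig1} is intrinsically real-symmetric-positive-semidefinite and the cited limit theorems apply verbatim to the real Jordan normal form of $\boldsymbol{J}_p(\boldsymbol{\gamma}_p)$.
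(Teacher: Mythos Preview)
Your proposal is essentially the same approach as the paper: condition on $\{\boldsymbol{\Gamma}^{(p)}=\boldsymbol{\gamma}\}$, verify the moment, Lindeberg, and covariance-convergence hypotheses from Lemma~\ref{lem:sa} and Theorem~\ref{thm:sa-slln}, and then invoke Pelletier's CLT, LIL and quadratic strong law for stochastic approximation in the regime where the leading eigenvalue of the Jacobian is below $-1/2$. Two minor remarks: first, the explicit bootstrap you propose for the quadratic remainder $\boldsymbol{r}_n$ is not needed, since Pelletier's theorems (\cite{10.1214/aoap/1027961032}, \cite{PELLETIER1998217}) are stated directly for recursions of the form \eqref{eq:sa-main} with smooth $\boldsymbol{h}_p$ and absorb the linearization error internally; second, your concern about complex eigenvalues on $(7/8,(113+\sqrt{97})/128)$ is moot here, because that interval lies entirely outside the hypothesis $\lambda^{(1)}_p(\boldsymbol{\Gamma}^{(p)})<-1/2$ of this theorem (cf.\ \eqref{eq:evalues} and the proof of Theorem~\ref{thm:clt-rw2mc}).
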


\begin{proof}
To show \eqref{eq:mclt1} and \eqref{eq:mlil11}, \eqref{eq:mlil12}, \eqref{eq:mlil13}, \eqref{eq:mqsl1}, respectively, we show, for any $\boldsymbol{\gamma} \in \operatorname{supp}\left(\boldsymbol{\Gamma}^{(p)}\right)$, that on the event $\left\{\boldsymbol{\Gamma}^{(p)} = \boldsymbol{\gamma}\right\}$, 
\begin{align}\label{eq:mclt1-a}  
&\sqrt{n}\left(\boldsymbol{\Gamma}_n - \boldsymbol{\gamma}\right) \stackrel{d}{\to} N\left(0, \int_{0}^{\infty}e^{t\left(\boldsymbol{J}_p\left(\boldsymbol{\gamma}\right)+\frac{1}{2}\boldsymbol{I}\right)}\sigma\left(\boldsymbol{\gamma}\right)e^{t\left(\boldsymbol{J}_p\left(\boldsymbol{\gamma}\right)+\frac{1}{2}\boldsymbol{I}\right)^{\top}} dt\right),
\end{align} 
and, almost surely,
\begin{align}\label{eq:mlil11-a}  
&\limsup_{n \to \infty}\sqrt{\frac{n}{2\log\log n}}\left\|\boldsymbol{\Gamma}_n - \boldsymbol{\gamma}\right\| \leq C,
\end{align}
\begin{align}\label{eq:mlil12-a}  
&\limsup_{n \to \infty}\sqrt{\frac{n}{2\log\log n}}\left(\boldsymbol{\nu}^{(1)}_p\left(\boldsymbol{\gamma}\right)\right)^\top\left(\boldsymbol{\Gamma}_n - \boldsymbol{\gamma}\right)\\ = - &\liminf_{n \to \infty}\sqrt{\frac{n}{2\log\log n}}\left(\boldsymbol{\nu}^{(1)}_p\left(\boldsymbol{\gamma}\right)\right)^\top\left(\boldsymbol{\Gamma}_n - \boldsymbol{\Gamma}^{(p)}\right) \nonumber \\ = \textcolor{white}{-}&\left(\left(\boldsymbol{\nu}^{(1)}_p\left(\boldsymbol{\gamma}\right) \right)^{\top}\left\{\int_{0}^{\infty}e^{t\left(\boldsymbol{J}_p\left(\boldsymbol{\gamma}\right)+\frac{1}{2}\boldsymbol{I}\right)}\sigma\left(\boldsymbol{\gamma}\right)e^{t\left(\boldsymbol{J}_p\left(\boldsymbol{\gamma}\right)+\frac{1}{2}\boldsymbol{I}\right)^{\top}} dt\right\}\left(\boldsymbol{\nu}^{(1)}_p\left(\boldsymbol{\gamma}\right) \right)\right)^{1/2}, \nonumber
\end{align}
\begin{align}\label{eq:mlil13-a}  
&\limsup_{n \to \infty}\sqrt{\frac{n}{2\log\log n}}\left(\boldsymbol{\nu}^{(2)}_p\left(\boldsymbol{\gamma}\right)\right)^\top\left(\boldsymbol{\Gamma}_n - \boldsymbol{\gamma}\right) \\ = - &\liminf_{n \to \infty}\sqrt{\frac{n}{2\log\log n}}\left(\boldsymbol{\nu}^{(2)}_p\left(\boldsymbol{\gamma}\right)\right)^\top\left(\boldsymbol{\gamma}_n - \boldsymbol{\gamma}\right)\nonumber \\ = \textcolor{white}{-}&\left(\left(\boldsymbol{\nu}^{(2)}_p\left(\boldsymbol{\gamma}\right) \right)^{\top}\left\{\int_{0}^{\infty}e^{t\left(\boldsymbol{J}_p\left(\boldsymbol{\gamma}\right)+\frac{1}{2}\boldsymbol{I}\right)}\sigma\left(\boldsymbol{\gamma}\right)e^{t\left(\boldsymbol{J}_p\left(\boldsymbol{\gamma}\right)+\frac{1}{2}\boldsymbol{I}\right)^{\top}} dt\right\}\left(\boldsymbol{\nu}^{(2)}_p\left(\boldsymbol{\gamma}\right) \right)\right)^{1/2}, \nonumber
\end{align}
\begin{align}\label{eq:mqsl1-a}  
&\lim_{n \to \infty} \frac{1}{\log n}\sum_{k=2}^n\left(\boldsymbol{\Gamma}_k - \boldsymbol{\gamma}\right)\left(\boldsymbol{\Gamma}_k - \boldsymbol{\gamma}\right)^{\top} = \int_{0}^{\infty}e^{t\left(\boldsymbol{J}_p\left(\boldsymbol{\gamma}\right)+\frac{1}{2}\boldsymbol{I}\right)}\sigma\left(\boldsymbol{\gamma}\right)e^{t\left(\boldsymbol{J}_p\left(\boldsymbol{\gamma}\right)+\frac{1}{2}\boldsymbol{I}\right)^{\top}} dt. 
\end{align}
From Lemma~\ref{lem:sa} and Lemma~\ref{thm:sa-slln}, it follows that 
\begin{itemize}
    \item[(a)] The stochastic approximation $\boldsymbol{\Gamma}_n$ satisfies Equation (2.1) of \cite{10.1214/aoap/1027961032} and Equation (1) of \cite{PELLETIER1998217}, with the residual term being identically zero.
    \item[(b)] The drift $\boldsymbol{h}_p$ satisfies Assumption A1.1 of \cite{10.1214/aoap/1027961032} (with $\rho \equiv 1$) and Assumption (A1) of \cite{PELLETIER1998217}.
    \item[(c)] The random noise $\boldsymbol{e}_{n+1}$ satisfies Assumption A1.2 of \cite{10.1214/aoap/1027961032} and Assumption (A3) of \cite{PELLETIER1998217}.
    \item[(d)] The step-size $1/(n+1)$) satisfies Assumption A1.3 of \cite{10.1214/aoap/1027961032} and Assumption (A2) of \cite{PELLETIER1998217}, with $v(t) = t$ and $\gamma_0 = 1$.
\end{itemize}
Consequently, Theorem 1 of \cite{10.1214/aoap/1027961032} (also see Remark (a) just after Theorem 1 in \cite{10.1214/aoap/1027961032} and Equation (7) in \cite{PELLETIER1998217}) implies \eqref{eq:mclt1-a}, proving \eqref{eq:mclt1}. Similarly, Theorem 1 (respectively, Theorem 3) of \cite{PELLETIER1998217} implies \eqref{eq:mlil11-a}, \eqref{eq:mlil12-a} and \eqref{eq:mlil13-a} (respectively, \eqref{eq:mqsl1-a}) proving \eqref{eq:mlil11}, \eqref{eq:mlil12} and \eqref{eq:mlil13} (respectively, \eqref{eq:mqsl1}).
\end{proof}

To establish the rate of convergence in Theorem~\ref{thm:sa-slln} when $\lambda^{(1)}_p\left(\boldsymbol{\Gamma}^{(p)}\right) \geq - \frac{1}{2}$ almost surely, we need some preliminary results. For $n \geq 2$ and $\boldsymbol{x} \in [0,1]^2$, define
\begin{align}\label{eq:not1}
    a_n &:= \sum_{k=2}^n\frac{1}{k+1}, \quad \boldsymbol{M}_{n+1}\left(\boldsymbol{x}\right) := \sum_{k=2}^n\frac{e^{(a_n-a_k)\boldsymbol{J}_p\left(\boldsymbol{x}\right)}}{k+1}\boldsymbol{e}_{k+1}, \quad \boldsymbol{\Delta}_{n+1}\left(\boldsymbol{x}\right) := \boldsymbol{\Gamma}_{n+1} - \boldsymbol{x} - \boldsymbol{M}_{n+1}\left(\boldsymbol{x}\right).\end{align} 
We need the asymptotic properties of $\boldsymbol{\nu}^{(i)}_p\left(\boldsymbol{\gamma}\right)^{\top}\boldsymbol{M}_{n+1}\left(\boldsymbol{\Gamma}^{(p)}\right)$ and $\boldsymbol{\nu}^{(i)}_p\left(\boldsymbol{\gamma}\right)^{\top}\boldsymbol{\Delta}_{n+1}\left(\boldsymbol{\Gamma}^{(p)}\right)$ for $i = 1,2$ and  different values of $p \neq 7/8$, depending on the value of $\boldsymbol{\lambda}^{(i)}_p\left(\boldsymbol{\Gamma}^{(p)}\right)$. For any $\boldsymbol{\gamma} \in \operatorname{supp}\left(\boldsymbol{\Gamma}^{(p)}\right)$, $i = 1,2$ and $p \neq 7/8$, as $\boldsymbol{\nu}^{(i)}_p\left(\boldsymbol{\gamma}\right)$ is an eigenvector of $\boldsymbol{J}_p\left(\boldsymbol{\gamma}\right)$ corresponding to the eigenvalue $\boldsymbol{\lambda}^{(i)}_p\left(\boldsymbol{\gamma}\right)$, it follows that
\begin{align}\label{eq:eigen-mg}
    \boldsymbol{\nu}^{(i)}_p\left(\boldsymbol{\gamma}\right)^\top\boldsymbol{M}_{k+1}\left(\boldsymbol{\gamma}\right) &= \sum_{k=2}^n\frac{e^{(a_n-a_k)\boldsymbol{\lambda}^{(i)}_p\left(\boldsymbol{\gamma}\right)}}{k+1}\boldsymbol{\nu}^{(i)}_p\left(\boldsymbol{\gamma}\right)^{\top}\boldsymbol{e}_{k+1} \nonumber
    \\ &= e^{a_n\boldsymbol{\lambda}^{(i)}_p\left(\boldsymbol{\gamma}\right)}\sum_{k=2}^n\Phi^{(i,p)}_{k}\left(\boldsymbol{\gamma}\right)\boldsymbol{\nu}^{(i)}_p\left(\boldsymbol{\gamma}\right)^{\top}\boldsymbol{e}_{k+1}, \text{ where } \Phi^{(i,p)}_{k}\left(\boldsymbol{\gamma}\right) = \frac{e^{-a_k\boldsymbol{\lambda}^{(i)}_p\left(\boldsymbol{\gamma}\right)}}{k+1}. 
\end{align}
For $\boldsymbol{\gamma} \in \operatorname{supp}\left(\boldsymbol{\Gamma}^{(p)}\right)$, $i = 1,2$ and $p \neq 7/8$, let \begin{align}\label{eq:c}c_{i,p}\left(\boldsymbol{\gamma}\right) = \lim_{n\to\infty} e^{(a_n-\log n)\boldsymbol{\lambda}^{(i)}_p\left(\boldsymbol{\gamma}\right)}.\end{align}
\begin{lemma}\label{lem:1}
Fix $i \in \{1,2\}$. Let $p \neq 7/8$ be such that $\lambda^{(i)}_p\left(\boldsymbol{\Gamma}^{(p)}\right) < - \frac{1}{2}$ almost surely.  
    \begin{itemize}
    \item[i)] The following central limit theorem holds - 
    \begin{align}\label{eq:clt1}  &\sqrt{n}\left\{{{\boldsymbol{\nu}^{(i)}_p\left(\boldsymbol{\Gamma}^{(p)}\right)^\top}\boldsymbol{M}_{n+1}\left(\boldsymbol{\Gamma}^{(p)}\right)}\right\} \stackrel{d}{\to} N\left(0, \frac{\boldsymbol{\nu}^{(i)}_p\left(\boldsymbol{\Gamma}^{(p)}\right)^\top\boldsymbol{\sigma\left(\boldsymbol{\Gamma}^{(p)}\right)}\boldsymbol{\nu}^{(i)}_p\left(\boldsymbol{\Gamma}^{(p)}\right)}{-\left(2\lambda^{(i)}_p\left(\boldsymbol{\Gamma}^{(p)}\right)+1\right)}\right). 
    \end{align}
    \item[ii)] The following law of iterated logarithms holds almost surely -  
    \begin{align}\label{eq:lil1}  &\limsup_{n \to \infty}\sqrt{\frac{n}{2\log\log n}}\left\{{{\boldsymbol{\nu}^{(i)}_p\left(\boldsymbol{\Gamma}^{(p)}\right)^\top}\boldsymbol{M}_{n+1}\left(\boldsymbol{\Gamma}^{(p)}\right)}\right\} \\ = - &\liminf_{n \to \infty}\sqrt{\frac{n}{2\log\log n}}\left\{{{\boldsymbol{\nu}^{(i)}_p\left(\boldsymbol{\Gamma}^{(p)}\right)^\top}\boldsymbol{M}_{n+1}\left(\boldsymbol{\Gamma}^{(p)}\right)}\right\} \nonumber \\ =\textcolor{white}{+}&  \left(\frac{\boldsymbol{\nu}^{(i)}_p\left(\boldsymbol{\Gamma}^{(p)}\right)^\top\boldsymbol{\sigma\left(\boldsymbol{\Gamma}^{(p)}\right)}\boldsymbol{\nu}^{(i)}_p\left(\boldsymbol{\Gamma}^{(p)}\right)}{-\left(2\lambda^{(i)}_p\left(\boldsymbol{\Gamma}^{(p)}\right)+1\right)}\right)^{1/2}. \nonumber
    \end{align}
    \item[iii)] The following quadratic strong law holds almost surely -  
    \begin{align}\label{eq:qsl1}  &\lim_{n \to \infty} \frac{1}{\log n}\sum_{k=2}^n\left\{{{\boldsymbol{\nu}^{(i)}_p\left(\boldsymbol{\Gamma}^{(p)}\right)^\top}\boldsymbol{M}_{k+1}\left(\boldsymbol{\Gamma}^{(p)}\right)}\boldsymbol{M}_{k+1}\left(\boldsymbol{\Gamma}^{(p)}\right)^{\top}\boldsymbol{\nu}^{(i)}_p\left(\boldsymbol{\Gamma}^{(p)}\right)\right\} \\ &= \frac{\boldsymbol{\nu}^{(i)}_p\left(\boldsymbol{\Gamma}^{(p)}\right)^\top\boldsymbol{\sigma\left(\boldsymbol{\Gamma}^{(p)}\right)}\boldsymbol{\nu}^{(i)}_p\left(\boldsymbol{\Gamma}^{(p)}\right)}{-\left(2\lambda^{(i)}_p\left(\boldsymbol{\Gamma}^{(p)}\right)+1\right)}. \nonumber
    \end{align}
    \end{itemize}
\end{lemma}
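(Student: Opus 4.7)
The plan is to reduce everything to the scalar martingale
\[
N_n \;:=\; \sum_{k=2}^{n} \Phi^{(i,p)}_k(\boldsymbol{\gamma})\, \boldsymbol{\nu}^{(i)}_p(\boldsymbol{\gamma})^{\top}\boldsymbol{e}_{k+1},
\]
defined on the event $\{\boldsymbol{\Gamma}^{(p)}=\boldsymbol{\gamma}\}$ for each $\boldsymbol{\gamma}\in\operatorname{supp}(\boldsymbol{\Gamma}^{(p)})$, and then to apply classical martingale limit theorems. On this event, $\lambda:=\lambda^{(i)}_p(\boldsymbol{\gamma})$ and $\nu:=\boldsymbol{\nu}^{(i)}_p(\boldsymbol{\gamma})$ are deterministic by the observation following \eqref{eq:eienvec2}, $(N_n)_{n\geq 2}$ is a genuine $(\mathcal{G}_n)$-martingale, and \eqref{eq:eigen-mg} gives the crucial identity $\nu^{\top}\boldsymbol{M}_{n+1}(\boldsymbol{\gamma})=e^{a_n\lambda}\,N_n$. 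I first establish the almost-sure asymptotics of the predictable bracket $\langle N\rangle_n$ and then feed this into (i) the Hall--Heyde martingale CLT, (ii) Stout's martingale LIL, and (iii) the Chaabane--Maaouia quadratic strong law (as used, e.g., in the stochastic-approximation literature of Bercu and collaborators).

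The bracket is
\[
\langle N\rangle_{n+1}\;=\;\sum_{k=2}^{n}\bigl[\Phi^{(i,p)}_k(\boldsymbol{\gamma})\bigr]^{2}\,\mathbb{E}\bigl[(\nu^{\top}\boldsymbol{e}_{k+1})^{2}\,\big|\,\mathcal{G}_k\bigr].
\]
Since $a_n=\log n+O(1)$, definition \eqref{eq:c} gives $\Phi^{(i,p)}_k(\boldsymbol{\gamma})\sim c_{i,p}(\boldsymbol{\gamma})^{-1}k^{-\lambda-1}$, and on the localized event, \eqref{eq:sigma-conv} gives $\mathbb{E}[(\nu^{\top}\boldsymbol{e}_{k+1})^{2}\mid\mathcal{G}_k]\to \nu^{\top}\boldsymbol{\sigma}(\boldsymbol{\gamma})\nu$ almost surely. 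The assumption $\lambda<-1/2$ forces $-2\lambda-1>0$, so a Toeplitz/Ces\`aro argument yields
\[
\langle N\rangle_{n+1}\;\sim\;\frac{\nu^{\top}\boldsymbol{\sigma}(\boldsymbol{\gamma})\nu}{c_{i,p}(\boldsymbol{\gamma})^{2}\,(-2\lambda-1)}\,n^{-2\lambda-1}\quad\text{a.s.,}
\]
and multiplying by $e^{2a_n\lambda}\sim c_{i,p}(\boldsymbol{\gamma})^{2}n^{2\lambda}$ shows that $n\,e^{2a_n\lambda}\langle N\rangle_{n+1}\to \nu^{\top}\boldsymbol{\sigma}(\boldsymbol{\gamma})\nu/(-2\lambda-1)$, which is the target variance in all three parts.

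For part (i), the Lindeberg condition for the rescaled increments $\Phi^{(i,p)}_k(\boldsymbol{\gamma})\nu^{\top}\boldsymbol{e}_{k+1}$ follows from \eqref{eq:lind} (or from \eqref{eq:sup} via H\"older together with the polynomial decay of $\Phi^{(i,p)}_k$), so the martingale CLT gives $N_n/\sqrt{\langle N\rangle_n}\stackrel{d}{\to}N(0,1)$. Writing $\sqrt{n}\,\nu^{\top}\boldsymbol{M}_{n+1}(\boldsymbol{\gamma})=\bigl(\sqrt{n}\,e^{a_n\lambda}\sqrt{\langle N\rangle_n}\bigr)\cdot\bigl(N_n/\sqrt{\langle N\rangle_n}\bigr)$ and plugging in the bracket asymptotic yields \eqref{eq:clt1}. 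Part (ii) follows analogously from Stout's LIL, whose moment hypothesis is supplied by \eqref{eq:sup}: one obtains $\limsup N_n/\sqrt{2\langle N\rangle_n\log\log\langle N\rangle_n}=1$ almost surely, and the bracket asymptotic converts $\log\log\langle N\rangle_n\sim\log\log n$ and identifies the constant in \eqref{eq:lil1}. Part (iii) is obtained by combining the Chaabane--Maaouia QSL, which gives a logarithmic-average statement for $N_k^{2}$ against $\langle N\rangle_k$, with an Abel-summation against the weight $e^{2a_k\lambda}$ in order to rewrite $\sum_k(\nu^{\top}\boldsymbol{M}_{k+1}(\boldsymbol{\gamma}))^{2}=\sum_k e^{2a_k\lambda}N_k^{2}$ and extract the $\log n$ normalization on the right-hand side of \eqref{eq:qsl1}.

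The main obstacle is the two-time-scale bookkeeping: the natural scales for $N_n$ are the polynomial $\langle N\rangle_n\asymp n^{-2\lambda-1}$ and its logarithm, whereas the lemma is normalized by $\sqrt{n}$ and $\log n$. Each application of a classical theorem must absorb the exponential prefactor $e^{a_n\lambda}$ cleanly, with the constant $c_{i,p}(\boldsymbol{\gamma})$ from \eqref{eq:c} cancelling exactly against the $c_{i,p}(\boldsymbol{\gamma})^{-2}$ in the bracket asymptotic. The QSL step in particular requires converting the Chaabane--Maaouia conclusion for $N$ into the target statement for $\nu^{\top}\boldsymbol{M}_{k+1}(\boldsymbol{\gamma})$, and this is where the Abel-summation bookkeeping is most delicate. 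Localization on $\{\boldsymbol{\Gamma}^{(p)}=\boldsymbol{\gamma}\}$ is handled routinely since this event is $\mathcal{G}_\infty$-measurable by Theorem~\ref{thm:sa-slln}.
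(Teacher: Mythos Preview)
Your proposal is correct and follows essentially the same route as the paper: localize on $\{\boldsymbol{\Gamma}^{(p)}=\boldsymbol{\gamma}\}$, use the scalar-martingale representation \eqref{eq:eigen-mg}, establish the bracket asymptotic via \eqref{eq:sigma-conv} and the polynomial growth of $\Phi^{(i,p)}_k(\boldsymbol{\gamma})$, and then invoke the standard martingale CLT, Stout's LIL, and a martingale quadratic strong law. The only cosmetic differences are the names of the cited tools (the paper uses Corollary~2.1.10 of \cite{duflo2013random}, Result~1 of \cite{PELLETIER1998217}, and Theorem~3 of \cite{BERCU2004157} in place of your Hall--Heyde, Stout, and Chaabane--Maaouia references) and that the paper carries out the QSL conversion by directly matching the weights $\Phi^{(i,p)}_k(\boldsymbol{\gamma})^2/\tau^{(i,p)}_k(\boldsymbol{\gamma})$ in Bercu's statement rather than via an Abel-summation rewrite.
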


\begin{proof}  
To show \eqref{eq:clt1}, we show, for any $\boldsymbol{\gamma} \in \operatorname{supp}\left(\boldsymbol{\Gamma}^{(p)}\right)$, that on the event $\left\{\boldsymbol{\Gamma}^{(p)} = \boldsymbol{\gamma}\right\}$, 
\begin{align}\label{eq:clt1-a}  
&\sqrt{n}\left\{{{\boldsymbol{\nu}^{(i)}_p\left(\boldsymbol{\gamma}\right)^\top}\boldsymbol{M}_{n+1}\left(\boldsymbol{\gamma}\right)}\right\} \stackrel{d}{\to} N\left(0, \frac{\boldsymbol{\nu}^{(i)}_p\left(\boldsymbol{\gamma}\right)^\top\boldsymbol{\sigma\left(\boldsymbol{\gamma}\right)}\boldsymbol{\nu}^{(i)}_p\left(\boldsymbol{\gamma}\right)}{{-\left(2\lambda^{(i)}_p\left(\boldsymbol{\gamma}\right)+1\right)}}\right). 
\end{align}
From \eqref{eq:eigen-mg}, \eqref{eq:sigma-conv} and \eqref{eq:lind}, it follows that $\left(e^{-a_n\boldsymbol{\lambda}^{(i)}_p\left(\boldsymbol{\gamma}\right)}{\boldsymbol{\nu}^{(i)}_p\left(\boldsymbol{\gamma}\right)^\top}\boldsymbol{M}_{n+1}\left(\boldsymbol{\gamma}\right)\right)_{n \geq 2}$ is a square-integrable martingale with quadratic variations $\left\langle{e^{-a_n\boldsymbol{\lambda}^{(i)}_p\left(\boldsymbol{\gamma}\right)}{\boldsymbol{\nu}^{(i)}_p\left(\boldsymbol{\gamma}\right)^\top}\boldsymbol{M}\left(\boldsymbol{\gamma}\right)}\right\rangle_{n}$ saisfying
\begin{align*}
\frac{{n}^{2\lambda^{(i)}_p\left(\boldsymbol{\gamma}\right)+1}}{\left(c_{i,p}\left(\boldsymbol{\gamma}\right)\right)^2 }\left\langle{e^{-a_n\boldsymbol{\lambda}^{(i)}_p\left(\boldsymbol{\gamma}\right)}{\boldsymbol{\nu}^{(i)}_p\left(\boldsymbol{\gamma}\right)^\top}\boldsymbol{M}\left(\boldsymbol{\gamma}\right)}\right\rangle_{n} \stackrel{a.s.}{\to} \frac{\boldsymbol{\nu}^{(i)}_p\left(\boldsymbol{\gamma}\right)^\top\boldsymbol{\sigma\left(\boldsymbol{\gamma}\right)}\boldsymbol{\nu}^{(i)}_p\left(\boldsymbol{\gamma}\right)}{{-\left(2\lambda^{(i)}_p\left(\boldsymbol{\gamma}\right)+1\right)}},
\end{align*}
and for all $\delta > 0$, $\Delta\left(e^{-a_n\boldsymbol{\lambda}^{(i)}_p\left(\boldsymbol{\gamma}\right)}{\boldsymbol{\nu}^{(i)}_p\left(\boldsymbol{\gamma}\right)^\top}\boldsymbol{M}_{n+1}\left(\boldsymbol{\gamma}\right)\right) := e^{-a_n\boldsymbol{\lambda}^{(i)}_p\left(\boldsymbol{\gamma}\right)}{\boldsymbol{\nu}^{(i)}_p\left(\boldsymbol{\gamma}\right)^\top}\boldsymbol{M}_{n+1}\left(\boldsymbol{\gamma}\right) - $\\$e^{-a_{n-1}\boldsymbol{\lambda}^{(i)}_p\left(\boldsymbol{\gamma}\right)}{\boldsymbol{\nu}^{(i)}_p\left(\boldsymbol{\gamma}\right)^\top}\boldsymbol{M}_{n}\left(\boldsymbol{\gamma}\right)$ satisfies
\begin{align*}
    &\frac{{n}^{2\lambda^{(i)}_p\left(\boldsymbol{\gamma}\right)+1}}{\left(c_{i,p}\left(\boldsymbol{\gamma}\right)\right)^2 }\sum_{k=2}^{n}\mathbb{E}\left( \left\|\Delta\left({\boldsymbol{\nu}^{(i)}_p\left(\boldsymbol{\gamma}\right)^\top}\boldsymbol{M}_{n+1}\left(\boldsymbol{\gamma}\right)\right)\right\|^2\right. \\ &\hspace{3cm}\times\left.\mathbf{1}\left\{\left\|\Delta\left({\boldsymbol{\nu}^{(i)}_p\left(\boldsymbol{\gamma}\right)^\top}\boldsymbol{M}_{n+1}\left(\boldsymbol{\gamma}\right)\right)\right\| \geq \delta{n}^{-\lambda^{(i)}_p\left(\boldsymbol{\gamma}\right)-1/2}\right\}\mid \mathcal{\boldsymbol{G}}_{k}\right) \stackrel{a.s.}{\to} 0.
\end{align*}
Hence from Corollary 2.1.10 of \cite{duflo2013random}, we get 
\begin{align}\label{eq:clt1-b}
    \frac{{n}^{\lambda^{(i)}_p\left(\boldsymbol{\gamma}\right)+1/2}}{c_{i,p}\left(\boldsymbol{\gamma}\right)}e^{-a_n\boldsymbol{\lambda}^{(i)}_p\left(\boldsymbol{\gamma}\right)}{\boldsymbol{\nu}^{(i)}_p\left(\boldsymbol{\gamma}\right)^\top}\boldsymbol{M}_{n+1}\left(\boldsymbol{\gamma}\right) \stackrel{d}{\to}  N\left(0, \frac{\boldsymbol{\nu}^{(i)}_p\left(\boldsymbol{\gamma}\right)^\top\boldsymbol{\sigma\left(\boldsymbol{\gamma}\right)}\boldsymbol{\nu}^{(i)}_p\left(\boldsymbol{\gamma}\right)}{{-\left(2\lambda^{(i)}_p\left(\boldsymbol{\gamma}\right)+1\right)}}\right).
\end{align}
From \eqref{eq:c} and \eqref{eq:clt1-b}, \eqref{eq:clt1-a} follows, establishing \eqref{eq:clt1}.

To show \eqref{eq:lil1}, we show, for any $\boldsymbol{\gamma} \in \operatorname{supp}\left(\boldsymbol{\Gamma}^{(p)}\right)$, that on the event $\left\{\boldsymbol{\Gamma}^{(p)} = \boldsymbol{\gamma}\right\}$, almost surely,
\begin{align}\label{eq:lil1-a}  
\limsup_{n \to \infty}\sqrt{\frac{n}{2\log\log n}}\left\{{{\boldsymbol{\nu}^{(i)}_p\left(\boldsymbol{\gamma}\right)^\top}\boldsymbol{M}_{n+1}\left(\boldsymbol{\gamma}\right)}\right\}  &= - \liminf_{n \to \infty}\sqrt{\frac{n}{2\log\log n}}\left\{{{\boldsymbol{\nu}^{(i)}_p\left(\boldsymbol{\gamma}\right)^\top}\boldsymbol{M}_{n+1}\left(\boldsymbol{\gamma}\right)}\right\} \nonumber \\ &=\sqrt{\frac{\boldsymbol{\nu}^{(i)}_p\left(\boldsymbol{\gamma}\right)^\top\boldsymbol{\sigma\left(\boldsymbol{\gamma}\right)}\boldsymbol{\nu}^{(i)}_p\left(\boldsymbol{\gamma}\right)}{-\left(2\lambda^{(i)}_p\left(\boldsymbol{\gamma}\right)+1\right)}}. 
\end{align}
From \eqref{eq:eigen-mg}, Lemma~\ref{lem:sa} (in particular, \eqref{eq:sigma-conv} and \eqref{eq:sup}), it follows that $\boldsymbol{\nu}^{(i)}_p\left(\boldsymbol{\gamma}\right)^{\top}\boldsymbol{e}_{n+1}$ is a martingale-difference w.r.t.\ the filtration $\mathcal{\boldsymbol{G}}_{n}$ such that, there exists $0 < \delta < 1$ for which almost surely,
\begin{align*}
        &\hspace{3cm}\sup_{n \geq 2} \mathbb{E}\left( \|\boldsymbol{e}_{n+1}\|^{2(1+\delta)} \mid \mathcal{\boldsymbol{G}}_{n}\right) < \infty, \\
&\mathbb{E}\left(\boldsymbol{\nu}^{(i)}_p\left(\boldsymbol{\gamma}\right)^{\top}\boldsymbol{e}_{n+1}\boldsymbol{e}^{\top}_{n+1}\boldsymbol{\nu}^{(i)}_p\left(\boldsymbol{\gamma}\right) \mid \mathcal{\boldsymbol{G}}_{n}\right) \stackrel{a.s.}{\to} \boldsymbol{\nu}^{(i)}_p\left(\boldsymbol{\gamma}\right)^{\top}\boldsymbol{\sigma}\left(\boldsymbol{\gamma}\right)\boldsymbol{\nu}^{(i)}_p\left(\boldsymbol{\gamma}\right),
\end{align*}
and $\tau^{(i,p)}_n\left(\boldsymbol{\gamma}\right) := \sum_{k=2}^n\left(\Phi^{(i,p)}_{k}\left(\boldsymbol{\gamma}\right)\right)^2$ satisfies
\begin{align}\label{eq:lil1-c}
    &\hspace{2.75cm}{n^{2\lambda^{(i)}_p\left(\boldsymbol{\gamma}\right)+1}}{\tau^{(i,p)}_n\left(\boldsymbol{\gamma}\right)} \to -\frac{\left((c_{i,p}\left(\boldsymbol{\gamma}\right)\right)^2}{2\lambda^{(i)}_p\left(\boldsymbol{\gamma}\right)+1}, \\
    &\sum_{k=2}^n\left(\frac{\left(\Phi^{(i,p)}_{k}\left(\boldsymbol{\gamma}\right)\right)^2}{\tau^{(i,p)}_k\left(\boldsymbol{\gamma}\right)}\right)^{1+\delta} < \infty, \quad
    \frac{\Phi^{(i,p)}_{k}\left(\boldsymbol{\gamma}\right)\left(\log \log {\tau^{(i,p)}_k\left(\boldsymbol{\gamma}\right)}\right)^{1/\delta}}{\tau^{(i,p)}_k\left(\boldsymbol{\gamma}\right)} \to 0. \nonumber
\end{align}
Hence Stout’s law of the iterated logarithm (see, for example, Result 1 of \cite{PELLETIER1998217}) implies that, almost surely,
\begin{align}\label{eq:lil1-b}  
&\limsup_{n \to \infty}\sqrt{\frac{1}{2\tau^{(i,p)}_n\log\log \tau^{(i,p)}_n}}\left\{\sum_{k=2}^n\Phi^{(i,p)}_{k}\left(\boldsymbol{\gamma}\right)\boldsymbol{\nu}^{(i)}_p\left(\boldsymbol{\gamma}\right)^{\top}\boldsymbol{e}_{k+1}\right\} \nonumber \\ = - &\liminf_{n \to \infty}\sqrt{\frac{1}{2\tau^{(i,p)}_n\log\log \tau^{(i,p)}_n}}\left\{\sum_{k=2}^n\Phi^{(i,p)}_{k}\left(\boldsymbol{\gamma}\right)\boldsymbol{\nu}^{(i)}_p\left(\boldsymbol{\gamma}\right)^{\top}\boldsymbol{e}_{k+1}\right\} \nonumber \\ =\textcolor{white}{+}&\sqrt{{\boldsymbol{\nu}^{(i)}_p\left(\boldsymbol{\gamma}\right)^\top\boldsymbol{\sigma\left(\boldsymbol{\gamma}\right)}\boldsymbol{\nu}^{(i)}_p\left(\boldsymbol{\gamma}\right)}}. 
\end{align}
From \eqref{eq:eigen-mg}, \eqref{eq:c}, \eqref{eq:lil1-c} and \eqref{eq:lil1-b}, \eqref{eq:lil1-a} follows, establishing \eqref{eq:lil1}.

To show \eqref{eq:qsl1}, we show, for any $\boldsymbol{\gamma} \in \operatorname{supp}\left(\boldsymbol{\Gamma}^{(p)}\right)$, that on the event $\left\{\boldsymbol{\Gamma}^{(p)} = \boldsymbol{\gamma}\right\}$, almost surely,
\begin{align}\label{eq:qsl1-a}  
&\lim_{n \to \infty} \frac{1}{\log n}\sum_{k=2}^n\left\{{{\boldsymbol{\nu}^{(i)}_p\left(\boldsymbol{\gamma}\right)^\top}\boldsymbol{M}_{k+1}\left(\boldsymbol{\gamma}\right)}\boldsymbol{M}_{k+1}\left(\boldsymbol{\gamma}\right)^{\top}\boldsymbol{\nu}^{(i)}_p\left(\boldsymbol{\gamma}\right)\right\} = \frac{\boldsymbol{\nu}^{(i)}_p\left(\boldsymbol{\gamma}\right)^\top\boldsymbol{\sigma\left(\boldsymbol{\gamma}\right)}\boldsymbol{\nu}^{(i)}_p\left(\boldsymbol{\gamma}\right)}{-\left(2\lambda^{(i)}_p\left(\boldsymbol{\gamma}\right)+1\right)}. 
\end{align}
Note that
\[
   \frac{n\left(\Phi^{(i,p)}_{n}\left(\boldsymbol{\gamma}\right)\right)^2}{\tau^{(i,p)}_n\left(\boldsymbol{\gamma}\right)} \to -\left(2\lambda^{(i)}_p\left(\boldsymbol{\gamma}\right)+1\right).
\]
So, Theorem 3 of \cite{BERCU2004157} implies that 
\begin{align}\label{eq:qsl1-b}  
&\lim_{n \to \infty} \frac{1}{\log \tau^{(i,p)}_n\left(\boldsymbol{\gamma}\right)}\sum_{k=2}^n\frac{\left(\Phi^{(i,p)}_{k}\left(\boldsymbol{\gamma}\right)\right)^2}{\tau^{(i,p)}_k\left(\boldsymbol{\gamma}\right)}\frac{\left(\sum_{j=2}^k\Phi^{(i,p)}_{j}\left(\boldsymbol{\gamma}\right)\boldsymbol{\nu}^{(i)}_p\left(\boldsymbol{\gamma}\right)^{\top}\boldsymbol{e}_{j+1}\right)^2}{\tau^{(i,p)}_{k-1}\left(\boldsymbol{\gamma}\right)} = {\boldsymbol{\nu}^{(i)}_p\left(\boldsymbol{\gamma}\right)^\top\boldsymbol{\sigma\left(\boldsymbol{\gamma}\right)}\boldsymbol{\nu}^{(i)}_p\left(\boldsymbol{\gamma}\right)}. 
\end{align}
From \eqref{eq:eigen-mg}, \eqref{eq:c}, \eqref{eq:lil1-c} and \eqref{eq:qsl1-b}, \eqref{eq:qsl1-a} follows, establishing \eqref{eq:qsl1}.
\end{proof}

\begin{lemma}\label{lem:2}
    Fix $i \in \{1,2\}$. Let $p \neq 7/8$ be such that $\lambda^{(i)}_p\left(\boldsymbol{\Gamma}^{(p)}\right) = - \frac{1}{2}$ almost surely.  
    \begin{itemize}
    \item[i)] The following central limit theorem holds - 
      \begin{align}\label{eq:clt2}  &\sqrt{\frac{n}{\log n}}\left\{{{\boldsymbol{\nu}^{(i)}_p\left(\boldsymbol{\Gamma}^{(p)}\right)^\top}\boldsymbol{M}_{n+1}\left(\boldsymbol{\Gamma}^{(p)}\right)}\right\} \stackrel{d}{\to} N\left(0, {\boldsymbol{\nu}^{(i)}_p\left(\boldsymbol{\Gamma}^{(p)}\right)^\top\boldsymbol{\sigma\left(\boldsymbol{\Gamma}^{(p)}\right)}\boldsymbol{\nu}^{(i)}_p\left(\boldsymbol{\Gamma}^{(p)}\right)}\right). 
     \end{align}
    \item[ii)] The following law of iterated logarithms holds almost surely -
    \begin{align}\label{eq:lil2}  &\limsup_{n \to \infty}\sqrt{\frac{n}{2\log n \log\log\log n}}\left\{{{\boldsymbol{\nu}^{(i)}_p\left(\boldsymbol{\Gamma}^{(p)}\right)^\top}\boldsymbol{M}_{n+1}\left(\boldsymbol{\Gamma}^{(p)}\right)}\right\} \\ = - &\liminf_{n \to \infty}\sqrt{\frac{n}{2\log n\log\log\log n}}\left\{{{\boldsymbol{\nu}^{(i)}_p\left(\boldsymbol{\Gamma}^{(p)}\right)^\top}\boldsymbol{M}_{n+1}\left(\boldsymbol{\Gamma}^{(p)}\right)}\right\} \nonumber \\ =\textcolor{white}{+}& \sqrt{\boldsymbol{\nu}^{(i)}_p\left(\boldsymbol{\Gamma}^{(p)}\right)^\top\boldsymbol{\sigma\left(\boldsymbol{\Gamma}^{(p)}\right)}\boldsymbol{\nu}^{(i)}_p\left(\boldsymbol{\Gamma}^{(p)}\right)}. \nonumber
    \end{align}
    \item[iii)] The following quadratic strong law holds almost surely -  
    \begin{align}\label{eq:qsl2}  &\lim_{n \to \infty} \frac{1}{\log \log n}\sum_{k=2}^n\left(\frac{1}{\log k}\right)^2\left\{{{\boldsymbol{\nu}^{(i)}_p\left(\boldsymbol{\Gamma}^{(p)}\right)^\top}\boldsymbol{M}_{k+1}\left(\boldsymbol{\Gamma}^{(p)}\right)}\boldsymbol{M}_{k+1}\left(\boldsymbol{\Gamma}^{(p)}\right)^{\top}\boldsymbol{\nu}^{(i)}_p\left(\boldsymbol{\Gamma}^{(p)}\right)\right\} \\ &= {\boldsymbol{\nu}^{(i)}_p\left(\boldsymbol{\Gamma}^{(p)}\right)^\top\boldsymbol{\sigma\left(\boldsymbol{\Gamma}^{(p)}\right)}\boldsymbol{\nu}^{(i)}_p\left(\boldsymbol{\Gamma}^{(p)}\right)}. \nonumber
    \end{align}
    \end{itemize}
\end{lemma}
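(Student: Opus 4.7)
The plan is to repeat, almost verbatim, the three-step argument in the proof of Lemma~\ref{lem:1}, but with the critical asymptotics $\lambda^{(i)}_p(\boldsymbol{\gamma}) = -1/2$ replacing the subcritical ones, so that the rescaled martingale now has quadratic variation diverging like $\log n$ rather than remaining bounded. First I would localize on the event $\{\boldsymbol{\Gamma}^{(p)} = \boldsymbol{\gamma}\}$ for an arbitrary $\boldsymbol{\gamma} \in \operatorname{supp}(\boldsymbol{\Gamma}^{(p)})$ and reuse the identity
\[
e^{-a_n \lambda^{(i)}_p(\boldsymbol{\gamma})} \boldsymbol{\nu}^{(i)}_p(\boldsymbol{\gamma})^\top \boldsymbol{M}_{n+1}(\boldsymbol{\gamma}) = \sum_{k=2}^n \Phi^{(i,p)}_k(\boldsymbol{\gamma}) \boldsymbol{\nu}^{(i)}_p(\boldsymbol{\gamma})^\top \boldsymbol{e}_{k+1}
\]
from \eqref{eq:eigen-mg}. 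Substituting $\lambda^{(i)}_p(\boldsymbol{\gamma}) = -1/2$ into $\Phi^{(i,p)}_k(\boldsymbol{\gamma}) = e^{-a_k \lambda^{(i)}_p(\boldsymbol{\gamma})}/(k+1)$ together with $a_k = \log k + O(1)$ gives the sharp asymptotics $\Phi^{(i,p)}_k(\boldsymbol{\gamma}) \sim 1/(c_{i,p}(\boldsymbol{\gamma}) \sqrt{k})$, hence $\tau^{(i,p)}_n(\boldsymbol{\gamma}) \sim (\log n)/c_{i,p}(\boldsymbol{\gamma})^2$ and $e^{-a_n \lambda^{(i)}_p(\boldsymbol{\gamma})} \sim \sqrt{n}/c_{i,p}(\boldsymbol{\gamma})$.

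With these asymptotics in hand, I would invoke the same three martingale tools as before. For part~(i), Corollary~2.1.10 of \cite{duflo2013random} yields
\[
\tau^{(i,p)}_n(\boldsymbol{\gamma})^{-1/2} \sum_{k=2}^n \Phi^{(i,p)}_k(\boldsymbol{\gamma}) \boldsymbol{\nu}^{(i)}_p(\boldsymbol{\gamma})^\top \boldsymbol{e}_{k+1} \stackrel{d}{\to} N\bigl(0,\,\boldsymbol{\nu}^{(i)}_p(\boldsymbol{\gamma})^\top \boldsymbol{\sigma}(\boldsymbol{\gamma}) \boldsymbol{\nu}^{(i)}_p(\boldsymbol{\gamma})\bigr),
\]
and the overall prefactor $\tau^{(i,p)}_n(\boldsymbol{\gamma})^{-1/2} \cdot e^{-a_n \lambda^{(i)}_p(\boldsymbol{\gamma})} \sim \sqrt{n/\log n}$ matches \eqref{eq:clt2}. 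For part~(ii), I would apply Stout's LIL (Result~1 of \cite{PELLETIER1998217}), whose hypotheses $\sum_k (\Phi^{(i,p)}_k(\boldsymbol{\gamma})^2/\tau^{(i,p)}_k(\boldsymbol{\gamma}))^{1+\delta} < \infty$ and $\Phi^{(i,p)}_k(\boldsymbol{\gamma}) (\log\log \tau^{(i,p)}_k(\boldsymbol{\gamma}))^{1/\delta}/\tau^{(i,p)}_k(\boldsymbol{\gamma}) \to 0$ reduce here to convergence of $\sum k^{-1}(\log k)^{-(1+\delta)}$ and smallness of $(\log\log\log k)^{1/\delta}/(\sqrt{k}\log k)$, both immediate. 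The normalizer $\sqrt{2 \tau^{(i,p)}_n(\boldsymbol{\gamma})\log\log \tau^{(i,p)}_n(\boldsymbol{\gamma})} \sim \sqrt{2\log n\log\log\log n}/c_{i,p}(\boldsymbol{\gamma})$ then produces \eqref{eq:lil2} after multiplication by $e^{-a_n\lambda^{(i)}_p(\boldsymbol{\gamma})} \sim \sqrt{n}/c_{i,p}(\boldsymbol{\gamma})$. For part~(iii), Theorem~3 of \cite{BERCU2004157} continues to apply, now with $n\Phi^{(i,p)}_n(\boldsymbol{\gamma})^2/\tau^{(i,p)}_n(\boldsymbol{\gamma}) \to 0$ in place of the positive limit $-(2\lambda+1)$; substituting $\Phi^{(i,p)}_k(\boldsymbol{\gamma})^2/\tau^{(i,p)}_k(\boldsymbol{\gamma}) \sim 1/(k\log k)$, $\tau^{(i,p)}_{k-1}(\boldsymbol{\gamma}) \sim \log k/c_{i,p}(\boldsymbol{\gamma})^2$, and $\bigl(\sum_{j=2}^k \Phi^{(i,p)}_j(\boldsymbol{\gamma}) \boldsymbol{\nu}^{(i)}_p(\boldsymbol{\gamma})^\top \boldsymbol{e}_{j+1}\bigr)^2 \sim (k/c_{i,p}(\boldsymbol{\gamma})^2)(\boldsymbol{\nu}^{(i)}_p(\boldsymbol{\gamma})^\top \boldsymbol{M}_{k+1}(\boldsymbol{\gamma}))^2$ collapses the Bercu sum to the form in \eqref{eq:qsl2}, with normalizer $\log \tau^{(i,p)}_n(\boldsymbol{\gamma}) \sim \log\log n$.

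The main obstacle is the bookkeeping around the critical threshold $2\lambda+1=0$: the non-critical formulas used in Lemma~\ref{lem:1} involve division by $-(2\lambda+1)$ and therefore cannot be quoted directly, and each of Duflo's martingale CLT, Stout's LIL, and Bercu's QSL must be applied in the mode where $\tau^{(i,p)}_n(\boldsymbol{\gamma}) \uparrow \infty$ rather than $\tau^{(i,p)}_n(\boldsymbol{\gamma})$ being bounded. Once the critical-regime asymptotics for $\Phi^{(i,p)}_k(\boldsymbol{\gamma})$ and $\tau^{(i,p)}_n(\boldsymbol{\gamma})$ are established to one order beyond those in the proof of Lemma~\ref{lem:1}, transcribing the three limits back to $\boldsymbol{\nu}^{(i)}_p(\boldsymbol{\gamma})^\top \boldsymbol{M}_{n+1}(\boldsymbol{\gamma})$ via $e^{-a_n\lambda^{(i)}_p(\boldsymbol{\gamma})} \sim \sqrt{n}/c_{i,p}(\boldsymbol{\gamma})$ is routine.
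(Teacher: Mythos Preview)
Your proposal is correct and follows essentially the same approach as the paper: localize on $\{\boldsymbol{\Gamma}^{(p)}=\boldsymbol{\gamma}\}$, use the eigen-martingale identity \eqref{eq:eigen-mg}, establish the critical asymptotics $\Phi^{(i,p)}_k(\boldsymbol{\gamma})\sim 1/(c_{i,p}(\boldsymbol{\gamma})\sqrt{k})$ and $\tau^{(i,p)}_n(\boldsymbol{\gamma})\sim (\log n)/c_{i,p}(\boldsymbol{\gamma})^2$, and then invoke Corollary~2.1.10 of \cite{duflo2013random}, Result~1 of \cite{PELLETIER1998217}, and Theorem~3 of \cite{BERCU2004157} for parts~(i), (ii), (iii) respectively. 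The only cosmetic discrepancy is that for part~(iii) the paper records the auxiliary limit $n\log n\,\Phi^{(i,p)}_n(\boldsymbol{\gamma})^2/\tau^{(i,p)}_n(\boldsymbol{\gamma})\to 1$ rather than your $n\,\Phi^{(i,p)}_n(\boldsymbol{\gamma})^2/\tau^{(i,p)}_n(\boldsymbol{\gamma})\to 0$, but your subsequent substitutions $\Phi^2_k/\tau_k\sim 1/(k\log k)$ and $\log\tau_n\sim\log\log n$ are exactly what is needed and match the paper.
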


\begin{proof}
    To show \eqref{eq:clt2}, we show, for any $\boldsymbol{\gamma} \in \operatorname{supp}\left(\boldsymbol{\Gamma}^{(p)}\right)$, that on the event $\left\{\boldsymbol{\Gamma}^{(p)} = \boldsymbol{\gamma}\right\}$, 
\begin{align}\label{eq:clt2-a}  
&\sqrt{\frac{n}{\log n}}\left\{{{\boldsymbol{\nu}^{(i)}_p\left(\boldsymbol{\gamma}\right)^\top}\boldsymbol{M}_{n+1}\left(\boldsymbol{\gamma}\right)}\right\} \stackrel{d}{\to} N\left(0, {\boldsymbol{\nu}^{(i)}_p\left(\boldsymbol{\gamma}\right)^\top\boldsymbol{\sigma\left(\boldsymbol{\gamma}\right)}\boldsymbol{\nu}^{(i)}_p\left(\boldsymbol{\gamma}\right)}\right). 
\end{align}
From \eqref{eq:eigen-mg}, \eqref{eq:sigma-conv} and \eqref{eq:lind}, it follows that $\left(e^{-a_n\boldsymbol{\lambda}^{(i)}_p\left(\boldsymbol{\gamma}\right)}{\boldsymbol{\nu}^{(i)}_p\left(\boldsymbol{\gamma}\right)^\top}\boldsymbol{M}_{n+1}\left(\boldsymbol{\gamma}\right)\right)_{n \geq 2}$ is a square-integrable martingale with quadratic variations $\left\langle{e^{-a_n\boldsymbol{\lambda}^{(i)}_p\left(\boldsymbol{\gamma}\right)}{\boldsymbol{\nu}^{(i)}_p\left(\boldsymbol{\gamma}\right)^\top}\boldsymbol{M}\left(\boldsymbol{\gamma}\right)}\right\rangle_{n}$ satisfying
\begin{align*}
\frac{1}{\left(c_{i,p}\left(\boldsymbol{\gamma}\right)\right)^2\log n }\left\langle{e^{-a_n\boldsymbol{\lambda}^{(i)}_p\left(\boldsymbol{\gamma}\right)}{\boldsymbol{\nu}^{(i)}_p\left(\boldsymbol{\gamma}\right)^\top}\boldsymbol{M}\left(\boldsymbol{\gamma}\right)}\right\rangle_{n} \stackrel{a.s.}{\to} {\boldsymbol{\nu}^{(i)}_p\left(\boldsymbol{\gamma}\right)^\top\boldsymbol{\sigma\left(\boldsymbol{\gamma}\right)}\boldsymbol{\nu}^{(i)}_p\left(\boldsymbol{\gamma}\right)},
\end{align*}
and for all $\delta > 0$, $\Delta\left(e^{-a_n\boldsymbol{\lambda}^{(i)}_p\left(\boldsymbol{\gamma}\right)}{\boldsymbol{\nu}^{(i)}_p\left(\boldsymbol{\gamma}\right)^\top}\boldsymbol{M}_{n+1}\left(\boldsymbol{\gamma}\right)\right) := e^{-a_n\boldsymbol{\lambda}^{(i)}_p\left(\boldsymbol{\gamma}\right)}{\boldsymbol{\nu}^{(i)}_p\left(\boldsymbol{\gamma}\right)^\top}\boldsymbol{M}_{n+1}\left(\boldsymbol{\gamma}\right) - $ \\ $e^{-a_n\boldsymbol{\lambda}^{(i)}_p\left(\boldsymbol{\gamma}\right)}{\boldsymbol{\nu}^{(i)}_p\left(\boldsymbol{\gamma}\right)^\top}\boldsymbol{M}_{n}\left(\boldsymbol{\gamma}\right)$ satisfies
\begin{align*}
    &\frac{1}{\left(c_{i,p}\left(\boldsymbol{\gamma}\right)\right)^2 \log n}\sum_{k=2}^{n}\mathbb{E}\left( \left\|\Delta\left({\boldsymbol{\nu}^{(i)}_p\left(\boldsymbol{\gamma}\right)^\top}\boldsymbol{M}_{n+1}\left(\boldsymbol{\gamma}\right)\right)\right\|^2\right.& \\ &\hspace{4cm}\left.\mathbf{1}\left\{\left\|\Delta\left({\boldsymbol{\nu}^{(i)}_p\left(\boldsymbol{\gamma}\right)^\top}\boldsymbol{M}_{n+1}\left(\boldsymbol{\gamma}\right)\right)\right\| \geq \delta\sqrt{\log n}\right\}\mid \mathcal{\boldsymbol{G}}_{k}\right) \stackrel{a.s.}{\to} 0.
\end{align*}
Hence from Corollary 2.1.10 of \cite{duflo2013random}, we get 
\begin{align}\label{eq:clt2-b}
    \frac{1}{c_{i,p}\left(\boldsymbol{\gamma}\right)\log n}e^{-a_n\boldsymbol{\lambda}^{(i)}_p\left(\boldsymbol{\gamma}\right)}{\boldsymbol{\nu}^{(i)}_p\left(\boldsymbol{\gamma}\right)^\top}\boldsymbol{M}_{n+1}\left(\boldsymbol{\gamma}\right) \stackrel{d}{\to}  N\left(0, {\boldsymbol{\nu}^{(i)}_p\left(\boldsymbol{\gamma}\right)^\top\boldsymbol{\sigma\left(\boldsymbol{\gamma}\right)}\boldsymbol{\nu}^{(i)}_p\left(\boldsymbol{\gamma}\right)}\right).
\end{align}
From \eqref{eq:c} and \eqref{eq:clt2-b}, \eqref{eq:clt2-a} follows, establishing \eqref{eq:clt2}.

To show \eqref{eq:lil2}, we show, for any $\boldsymbol{\gamma} \in \operatorname{supp}\left(\boldsymbol{\Gamma}^{(p)}\right)$, that on the event $\left\{\boldsymbol{\Gamma}^{(p)} = \boldsymbol{\gamma}\right\}$, almost surely,
\begin{align}\label{eq:lil2-a}  
&\limsup_{n \to \infty}\sqrt{\frac{n}{2\log n \log\log\log n}}\left\{{{\boldsymbol{\nu}^{(i)}_p\left(\boldsymbol{\gamma}\right)^\top}\boldsymbol{M}_{n+1}\left(\boldsymbol{\gamma}\right)}\right\} \\ = - &\liminf_{n \to \infty}\sqrt{\frac{n}{2\log n\log\log\log n}}\left\{{{\boldsymbol{\nu}^{(i)}_p\left(\boldsymbol{\gamma}\right)^\top}\boldsymbol{M}_{n+1}\left(\boldsymbol{\gamma}\right)}\right\} \nonumber \\ =\textcolor{white}{+}& \sqrt{\boldsymbol{\nu}^{(i)}_p\left(\boldsymbol{\gamma}\right)^\top\boldsymbol{\sigma\left(\boldsymbol{\gamma}\right)}\boldsymbol{\nu}^{(i)}_p\left(\boldsymbol{\gamma}\right)}. \nonumber
\end{align}
From \eqref{eq:eigen-mg}, Lemma~\ref{lem:sa} (in particular, \eqref{eq:sigma-conv} and \eqref{eq:sup}), it follows that $\boldsymbol{\nu}^{(i)}_p\left(\boldsymbol{\gamma}\right)^{\top}\boldsymbol{e}_{n+1}$ is a martingale-difference w.r.t.\ the filtration $\mathcal{\boldsymbol{G}}_{n}$ such that, there exists $0 < \delta < 1$ for which almost surely
\begin{align*}
        &\hspace{3cm}\sup_{n \geq 2} \mathbb{E}\left( \|\boldsymbol{e}_{n+1}\|^{2(1+\delta)} \mid \mathcal{\boldsymbol{G}}_{n}\right) < \infty, \\
&\mathbb{E}\left(\boldsymbol{\nu}^{(i)}_p\left(\boldsymbol{\gamma}\right)^{\top}\boldsymbol{e}_{n+1}\boldsymbol{e}^{\top}_{n+1}\boldsymbol{\nu}^{(i)}_p\left(\boldsymbol{\gamma}\right) \mid \mathcal{\boldsymbol{G}}_{n}\right) \stackrel{a.s.}{\to} \boldsymbol{\nu}^{(i)}_p\left(\boldsymbol{\gamma}\right)^{\top}\boldsymbol{\sigma}\left(\boldsymbol{\gamma}\right)\boldsymbol{\nu}^{(i)}_p\left(\boldsymbol{\gamma}\right),
\end{align*}
and $\tau^{(i,p)}_n\left(\boldsymbol{\gamma}\right) := \sum_{k=2}^n\left(\Phi^{(i,p)}_{k}\left(\boldsymbol{\gamma}\right)\right)^2$ satisfies
\begin{align}\label{eq:lil2-c}
    &\hspace{2.75cm}\frac{\tau^{(i,p)}_n\left(\boldsymbol{\gamma}\right)}{\log n} \to -{\left(c_{i,p}\left(\boldsymbol{\gamma}\right)\right)^2}, \\
    &\sum_{k=2}^n\left(\frac{\left(\Phi^{(i,p)}_{k}\left(\boldsymbol{\gamma}\right)\right)^2}{\tau^{(i,p)}_k\left(\boldsymbol{\gamma}\right)}\right)^{1+\delta} < \infty, \quad
    \frac{\Phi^{(i,p)}_{k}\left(\boldsymbol{\gamma}\right)\left(\log \log {\tau^{(i,p)}_k\left(\boldsymbol{\gamma}\right)}\right)^{1/\delta}}{\tau^{(i,p)}_k\left(\boldsymbol{\gamma}\right)} \to 0. \nonumber
\end{align}
Hence Stout’s law of the iterated logarithm (see, for example, Result 1 of \cite{PELLETIER1998217}) implies that, almost surely,
\begin{align}\label{eq:lil2-b}  
&\limsup_{n \to \infty}\sqrt{\frac{1}{2\tau^{(i,p)}_n\log\log \tau^{(i,p)}_n}}\left\{\sum_{k=2}^n\Phi^{(i,p)}_{k}\left(\boldsymbol{\gamma}\right)\boldsymbol{\nu}^{(i)}_p\left(\boldsymbol{\gamma}\right)^{\top}\boldsymbol{e}_{k+1}\right\} \nonumber \\ = - &\liminf_{n \to \infty}\sqrt{\frac{1}{2\tau^{(i,p)}_n\log\log \tau^{(i,p)}_n}}\left\{\sum_{k=2}^n\Phi^{(i,p)}_{k}\left(\boldsymbol{\gamma}\right)\boldsymbol{\nu}^{(i)}_p\left(\boldsymbol{\gamma}\right)^{\top}\boldsymbol{e}_{k+1}\right\} \nonumber \\ =\textcolor{white}{+}&\sqrt{{\boldsymbol{\nu}^{(i)}_p\left(\boldsymbol{\gamma}\right)^\top\boldsymbol{\sigma\left(\boldsymbol{\gamma}\right)}\boldsymbol{\nu}^{(i)}_p\left(\boldsymbol{\gamma}\right)}}. 
\end{align}
From \eqref{eq:c}, \eqref{eq:lil2-c} and \eqref{eq:lil2-b}, \eqref{eq:lil2-a} follows, establishing \eqref{eq:lil2}.

To show \eqref{eq:qsl2}, we show, for any $\boldsymbol{\gamma} \in \operatorname{supp}\left(\boldsymbol{\Gamma}^{(p)}\right)$, that on the event $\left\{\boldsymbol{\Gamma}^{(p)} = \boldsymbol{\gamma}\right\}$, almost surely,
\begin{align}\label{eq:qsl2-a}  
&\lim_{n \to \infty} \frac{1}{\log \log n}\sum_{k=2}^n\left(\frac{1}{\log k}\right)^2\left\{{{\boldsymbol{\nu}^{(i)}_p\left(\boldsymbol{\gamma}\right)^\top}\boldsymbol{M}_{k+1}\left(\boldsymbol{\gamma}\right)}\boldsymbol{M}_{k+1}\left(\boldsymbol{\gamma}\right)^{\top}\boldsymbol{\nu}^{(i)}_p\left(\boldsymbol{\gamma}\right)\right\} \\ &= {\boldsymbol{\nu}^{(i)}_p\left(\boldsymbol{\gamma}\right)^\top\boldsymbol{\sigma\left(\boldsymbol{\gamma}\right)}\boldsymbol{\nu}^{(i)}_p\left(\boldsymbol{\gamma}\right)}. \nonumber
\end{align}
Note that
\[
   \frac{n\log n\left(\Phi^{(i,p)}_{n}\left(\boldsymbol{\gamma}\right)\right)^2}{\tau^{(i,p)}_n\left(\boldsymbol{\gamma}\right)} \to 1.
\]
So, Theorem 3 of \cite{BERCU2004157} implies that 
\begin{align}\label{eq:qsl2-b}  
&\lim_{n \to \infty} \frac{1}{\log \tau^{(i,p)}_n\left(\boldsymbol{\gamma}\right)}\sum_{k=2}^n\frac{\left(\Phi^{(i,p)}_{k}\left(\boldsymbol{\gamma}\right)\right)^2}{\tau^{(i,p)}_k\left(\boldsymbol{\gamma}\right)}\frac{\left(\sum_{j=2}^k\Phi^{(i,p)}_{j}\left(\boldsymbol{\gamma}\right)\boldsymbol{\nu}^{(i)}_p\left(\boldsymbol{\gamma}\right)^{\top}\boldsymbol{e}_{j+1}\right)^2}{\tau^{(i,p)}_{k-1}\left(\boldsymbol{\gamma}\right)} = {\boldsymbol{\nu}^{(i)}_p\left(\boldsymbol{\gamma}\right)^\top\boldsymbol{\sigma\left(\boldsymbol{\gamma}\right)}\boldsymbol{\nu}^{(i)}_p\left(\boldsymbol{\gamma}\right)}. 
\end{align}
From \eqref{eq:eigen-mg}, \eqref{eq:c}, \eqref{eq:lil2-c} and \eqref{eq:qsl2-b}, \eqref{eq:qsl2-a} follows, establishing \eqref{eq:qsl2}.
\end{proof}

\begin{lemma}\label{lem:3}
    Fix $i \in \{1,2\}$. Let $p \neq 7/8$ be such that $- \frac{1}{2} < \lambda^{(i)}_p\left(\boldsymbol{\Gamma}^{(p)}\right) < 0$. Then, there exists a finite random variable $\boldsymbol{M}^{(i,p)}$ such that 
    \begin{align}\label{eq:clt3}  &\frac{{{\boldsymbol{\nu}^{(i)}_p\left(\boldsymbol{\Gamma}^{(p)}\right)^\top}\boldsymbol{M}_{n+1}\left(\boldsymbol{\Gamma}^{(p)}\right)}}{{{n}^{\lambda^{(i)}_p\left(\boldsymbol{\Gamma}^{(p)}\right)}}} \stackrel{a.s.}{\to} \boldsymbol{M}^{(i,p)}. 
    \end{align}
\end{lemma}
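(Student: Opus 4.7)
The plan is to adapt the conditioning strategy from Lemmas~\ref{lem:1} and~\ref{lem:2}: fix any atom $\boldsymbol{\gamma}\in\operatorname{supp}(\boldsymbol{\Gamma}^{(p)})$, work on the event $\{\boldsymbol{\Gamma}^{(p)}=\boldsymbol{\gamma}\}$ on which $\boldsymbol{\nu}^{(i)}_p(\boldsymbol{\gamma})$ and $\lambda^{(i)}_p(\boldsymbol{\gamma})$ are deterministic, and establish that
\[
\frac{\boldsymbol{\nu}^{(i)}_p(\boldsymbol{\gamma})^\top \boldsymbol{M}_{n+1}(\boldsymbol{\gamma})}{n^{\lambda^{(i)}_p(\boldsymbol{\gamma})}} \stackrel{a.s.}{\to} c_{i,p}(\boldsymbol{\gamma})\, N_\infty(\boldsymbol{\gamma})
\]
for some finite random variable $N_\infty(\boldsymbol{\gamma})$. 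Piecing the limits together over the (at most two-point) support of $\boldsymbol{\Gamma}^{(p)}$ then defines $\boldsymbol{M}^{(i,p)} := c_{i,p}(\boldsymbol{\Gamma}^{(p)})\,N_\infty(\boldsymbol{\Gamma}^{(p)})$ unambiguously on the whole sample space.

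The engine of the argument is a straightforward $L^2$-martingale convergence applied to the decomposition \eqref{eq:eigen-mg}, namely
\[
\boldsymbol{\nu}^{(i)}_p(\boldsymbol{\gamma})^\top \boldsymbol{M}_{n+1}(\boldsymbol{\gamma}) = e^{a_n \lambda^{(i)}_p(\boldsymbol{\gamma})}\, N_n(\boldsymbol{\gamma}),
\qquad
N_n(\boldsymbol{\gamma}) := \sum_{k=2}^n \Phi^{(i,p)}_k(\boldsymbol{\gamma})\, \boldsymbol{\nu}^{(i)}_p(\boldsymbol{\gamma})^\top \boldsymbol{e}_{k+1}.
\]
By Lemma~\ref{lem:sa}, $(N_n(\boldsymbol{\gamma}))_{n\ge 2}$ is a square-integrable $(\boldsymbol{\mathcal{G}}_n)$-martingale. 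From \eqref{eq:c} one has $a_k = \log k + O(1)$, whence $\Phi^{(i,p)}_k(\boldsymbol{\gamma}) \sim c_{i,p}(\boldsymbol{\gamma})^{-1}\, k^{-\lambda^{(i)}_p(\boldsymbol{\gamma})-1}$. The hypothesis $-1/2 < \lambda^{(i)}_p(\boldsymbol{\gamma}) < 0$ forces $-2\lambda^{(i)}_p(\boldsymbol{\gamma})-2 < -1$, so $\sum_{k\ge 2}(\Phi^{(i,p)}_k(\boldsymbol{\gamma}))^2 < \infty$. Together with the almost sure convergence of $\mathbb{E}(\boldsymbol{\nu}^{(i)}_p(\boldsymbol{\gamma})^\top \boldsymbol{e}_{k+1}\boldsymbol{e}^\top_{k+1} \boldsymbol{\nu}^{(i)}_p(\boldsymbol{\gamma}) \mid \boldsymbol{\mathcal{G}}_k)$ provided by \eqref{eq:sigma-conv}, this gives $\langle N(\boldsymbol{\gamma})\rangle_\infty < \infty$ almost surely on $\{\boldsymbol{\Gamma}^{(p)}=\boldsymbol{\gamma}\}$.

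The standard $L^2$-martingale convergence theorem then yields a finite almost sure limit $N_n(\boldsymbol{\gamma}) \to N_\infty(\boldsymbol{\gamma})$, and combining this with the asymptotic $e^{a_n \lambda^{(i)}_p(\boldsymbol{\gamma})} \sim c_{i,p}(\boldsymbol{\gamma})\, n^{\lambda^{(i)}_p(\boldsymbol{\gamma})}$ from \eqref{eq:c} closes the argument. I do not anticipate a substantial obstacle here: in contrast to the borderline case $\lambda^{(i)}_p(\boldsymbol{\Gamma}^{(p)}) = -1/2$ of Lemma~\ref{lem:2}, where the predictable quadratic variation diverges like $\log n$ and forces the $\sqrt{n/\log n}$-type normalization, the strict inequality $\lambda^{(i)}_p(\boldsymbol{\Gamma}^{(p)}) > -1/2$ is precisely what makes $N_n(\boldsymbol{\gamma})$ an $L^2$-bounded martingale with an a.s.\ finite limit, so no CLT or law-of-iterated-logarithm machinery is needed. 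The only delicate point is verifying the square-summability threshold atom-by-atom so that the random limit $\boldsymbol{M}^{(i,p)}$ inherits a genuine (and generally non-degenerate) distribution from the mixing over $\operatorname{supp}(\boldsymbol{\Gamma}^{(p)})$.
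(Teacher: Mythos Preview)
Your proposal is correct and follows essentially the same route as the paper's proof: condition on $\{\boldsymbol{\Gamma}^{(p)}=\boldsymbol{\gamma}\}$, use the decomposition \eqref{eq:eigen-mg} to recognise $N_n(\boldsymbol{\gamma})=e^{-a_n\lambda^{(i)}_p(\boldsymbol{\gamma})}\boldsymbol{\nu}^{(i)}_p(\boldsymbol{\gamma})^\top\boldsymbol{M}_{n+1}(\boldsymbol{\gamma})$ as a square-integrable martingale, observe that $\sum_k(\Phi^{(i,p)}_k(\boldsymbol{\gamma}))^2<\infty$ precisely because $\lambda^{(i)}_p(\boldsymbol{\gamma})>-\tfrac12$, apply $L^2$-martingale convergence (the paper cites Theorem~1.3.24 of \cite{duflo2013random}), and finish with the asymptotic \eqref{eq:c}. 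The only cosmetic difference is that the paper does not invoke \eqref{eq:sigma-conv} explicitly here, since the uniform bound on conditional second moments from Lemma~\ref{lem:sa} already suffices.
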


\begin{proof}
To show \eqref{eq:clt3}, it is enough show, for any $\boldsymbol{\gamma} \in \operatorname{supp}\left(\boldsymbol{\Gamma}^{(p)}\right)$, that on the event $\left\{\boldsymbol{\Gamma}^{(p)} = \boldsymbol{\gamma}\right\}$, there exists a finite random variable $\boldsymbol{M}^{(i,p)}\left(\boldsymbol{\gamma}\right)$ such that 
\begin{align}\label{eq:clt3-a}  
&\frac{{{\boldsymbol{\nu}^{(i)}_p\left(\boldsymbol{\gamma}\right)^\top}\boldsymbol{M}_{kn+1}\left(\boldsymbol{\gamma}\right)}}{{{n}^{\lambda^{(i)}_p\left(\boldsymbol{\gamma}\right)}}} \stackrel{a.s.}{\to} \boldsymbol{M}^{(i,p)}\left(\boldsymbol{\gamma}\right). 
\end{align}
From \eqref{eq:eigen-mg} and Lemma~\ref{lem:sa}, it follows that $\left(e^{-a_n\boldsymbol{\lambda}^{(i)}_p\left(\boldsymbol{\gamma}\right)}{\boldsymbol{\nu}^{(i)}_p\left(\boldsymbol{\gamma}\right)^\top}\boldsymbol{M}_{n+1}\left(\boldsymbol{\gamma}\right)\right)_{n \geq 2}$ is a martingale and we have $e^{-a_n\boldsymbol{\lambda}^{(i)}_p\left(\boldsymbol{\gamma}\right)}{\boldsymbol{\nu}^{(i)}_p\left(\boldsymbol{\gamma}\right)^\top}\boldsymbol{M}_{n+1}\left(\boldsymbol{\gamma}\right) = \sum_{k=2}^n\Phi^{(i,p)}_{k}\left(\boldsymbol{\gamma}\right)\boldsymbol{\nu}^{(i)}_p\left(\boldsymbol{\gamma}\right)^{\top}\boldsymbol{e}_{k+1}$ with 
\begin{align*}
   \lim_{n\to \infty} \sum_{k=2}^n\left|\Phi^{(i,p)}_{k}\left(\boldsymbol{\gamma}\right)\right|^2 < \infty.
\end{align*}
Hence Theorem 1.3.24 of \cite{duflo2013random} gives us that for some finite random variable $\boldsymbol{M}^{(i,p)}\left(\boldsymbol{\gamma}\right)$, 
\begin{align}\label{eq:clt3-b}
    e^{-a_n\boldsymbol{\lambda}^{(i)}_p\left(\boldsymbol{\gamma}\right)}{\boldsymbol{\nu}^{(i)}_p\left(\boldsymbol{\gamma}\right)^\top}\boldsymbol{M}_{n+1}\left(\boldsymbol{\gamma}\right) \stackrel{a.s.}{\to} \frac{\boldsymbol{M}^{(i,p)}\left(\boldsymbol{\gamma}\right)}{c_{i,p}\left(\boldsymbol{\gamma}\right)}.
\end{align}
From \eqref{eq:clt3-b} and \eqref{eq:c}, we get \eqref{eq:clt3-a}, establishing \eqref{eq:clt3}.
\end{proof}

\begin{lemma}\label{lem:4}
     Fix $i \in \{1,2\}$. Let $p \neq 7/8$ be such that $- \frac{1}{2} \leq \lambda^{(1)}_p\left(\boldsymbol{\Gamma}^{(p)}\right) < 0$. Then there exists a finite random variable $\boldsymbol{\Delta}^{(i,p)}$ such that \begin{align}\label{eq:delta1}\frac{{\boldsymbol{\nu}^{(i)}_p\left(\boldsymbol{\Gamma}^{(p)}\right)^\top}\boldsymbol{\Delta}_{n+1}\left(\boldsymbol{\Gamma}^{(p)}\right)}{n^{\lambda^{(1)}_p\left(\boldsymbol{\Gamma}^{(p)}\right)}} \stackrel{a.s.}{\to} \boldsymbol{\Delta}^{(i,p)}.
\end{align}
\end{lemma}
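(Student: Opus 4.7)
My plan is to emulate the proofs of Lemmas~\ref{lem:1}--\ref{lem:3}: fix $\boldsymbol{\gamma}\in\operatorname{supp}(\boldsymbol{\Gamma}^{(p)})$, work on the event $\{\boldsymbol{\Gamma}^{(p)}=\boldsymbol{\gamma}\}$, and prove that $\boldsymbol{\nu}^{(i)}_p(\boldsymbol{\gamma})^\top\boldsymbol{\Delta}_{n+1}(\boldsymbol{\gamma})/n^{\lambda^{(1)}_p(\boldsymbol{\gamma})}$ converges almost surely to a finite random variable $\boldsymbol{\Delta}^{(i,p)}(\boldsymbol{\gamma})$. The first step is to derive a one-step recursion for $\boldsymbol{\Delta}_{n+1}(\boldsymbol{\gamma})$. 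The defining relation~\eqref{eq:not1} yields $\boldsymbol{M}_{n+1}(\boldsymbol{\gamma})=e^{\boldsymbol{J}_p(\boldsymbol{\gamma})/(n+1)}\boldsymbol{M}_n(\boldsymbol{\gamma})+\boldsymbol{e}_{n+1}/(n+1)$. Subtracting this from the stochastic approximation~\eqref{eq:sa-main}, Taylor-expanding the quadratic map $\boldsymbol{h}_p$ around $\boldsymbol{\gamma}$ as $\boldsymbol{h}_p(\boldsymbol{\Gamma}_n)=\boldsymbol{J}_p(\boldsymbol{\gamma})(\boldsymbol{\Gamma}_n-\boldsymbol{\gamma})+\boldsymbol{R}(\boldsymbol{\Gamma}_n-\boldsymbol{\gamma})$ with $\|\boldsymbol{R}(\boldsymbol{y})\|\leq C\|\boldsymbol{y}\|^2$, and using $e^{\boldsymbol{J}_p(\boldsymbol{\gamma})/(n+1)}=\boldsymbol{I}+\boldsymbol{J}_p(\boldsymbol{\gamma})/(n+1)+O(n^{-2})$ give
\begin{equation*}
\boldsymbol{\Delta}_{n+1}(\boldsymbol{\gamma})=e^{\boldsymbol{J}_p(\boldsymbol{\gamma})/(n+1)}\boldsymbol{\Delta}_n(\boldsymbol{\gamma})+\frac{\boldsymbol{R}(\boldsymbol{\Gamma}_n-\boldsymbol{\gamma})}{n+1}+\boldsymbol{B}_n(\boldsymbol{\Gamma}_n-\boldsymbol{\gamma}),
\end{equation*}
with $\|\boldsymbol{B}_n\|=O(n^{-2})$.

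Next, premultiplying by $\boldsymbol{\nu}^{(i)}_p(\boldsymbol{\gamma})^\top$ and using the left-eigenvector identity already exploited in~\eqref{eq:eigen-mg}, the vector equation decouples into the scalar recursion
\begin{equation*}
\delta_{n+1}^{(i)}=e^{\lambda^{(i)}_p(\boldsymbol{\gamma})/(n+1)}\delta_n^{(i)}+u_n^{(i)},\qquad\delta_n^{(i)}:=\boldsymbol{\nu}^{(i)}_p(\boldsymbol{\gamma})^\top\boldsymbol{\Delta}_n(\boldsymbol{\gamma}),
\end{equation*}
with $|u_n^{(i)}|\leq C\bigl(\|\boldsymbol{\Gamma}_n-\boldsymbol{\gamma}\|^2/n+\|\boldsymbol{\Gamma}_n-\boldsymbol{\gamma}\|/n^2\bigr)$. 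Iterating from any fixed $n_0$ and invoking the asymptotic $e^{\lambda^{(i)}_p(\boldsymbol{\gamma})a_n}\sim c_{i,p}(\boldsymbol{\gamma})\,n^{\lambda^{(i)}_p(\boldsymbol{\gamma})}$ from~\eqref{eq:c} produces
\begin{equation*}
\delta_{n+1}^{(i)}=c_{i,p}(\boldsymbol{\gamma})\,n^{\lambda^{(i)}_p(\boldsymbol{\gamma})}(1+o(1))\Biggl\{\text{const}+\sum_{k=n_0}^{n}e^{-\lambda^{(i)}_p(\boldsymbol{\gamma})a_k}u_k^{(i)}\Biggr\}.
\end{equation*}

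Dividing by $n^{\lambda^{(1)}_p(\boldsymbol{\gamma})}$, the prefactor becomes $c_{i,p}(\boldsymbol{\gamma})\,n^{\lambda^{(i)}_p-\lambda^{(1)}_p}$, which equals $c_{1,p}(\boldsymbol{\gamma})$ when $i=1$ and tends to $0$ when $i=2$ (since $\lambda^{(2)}_p<\lambda^{(1)}_p$, by~\eqref{eq:evalues}). To control the inner series, I need the a priori bound $\|\boldsymbol{\Gamma}_n-\boldsymbol{\gamma}\|=O(n^{\lambda^{(1)}_p(\boldsymbol{\gamma})})$ almost surely (with an additional $\sqrt{\log n}$ in the boundary case $\lambda^{(1)}_p=-1/2$); this follows by combining the rate on $\boldsymbol{\nu}^{(1)}_p(\boldsymbol{\gamma})^\top\boldsymbol{M}_{n+1}(\boldsymbol{\gamma})$ from Lemma~\ref{lem:3} (or Lemma~\ref{lem:2} on the boundary) with a recursive bound showing $\boldsymbol{\Delta}_n$ is of at most the same order. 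Granting this, $|u_k^{(i)}|=O(k^{2\lambda^{(1)}_p-1})$ up to logarithms, so $e^{-\lambda^{(i)}_p a_k}|u_k^{(i)}|=O(k^{2\lambda^{(1)}_p-\lambda^{(i)}_p-1})$. When $i=1$ this exponent equals $\lambda^{(1)}_p-1<-1$, the series converges absolutely almost surely, and the normalized quantity converges to a finite $\boldsymbol{\Delta}^{(1,p)}(\boldsymbol{\gamma})$. When $i=2$, the partial sums are at worst of order $n^{2\lambda^{(1)}_p-\lambda^{(2)}_p}$, and multiplying by the vanishing prefactor $n^{\lambda^{(2)}_p-\lambda^{(1)}_p}$ yields an overall $n^{\lambda^{(1)}_p}\to 0$, giving $\boldsymbol{\Delta}^{(2,p)}(\boldsymbol{\gamma})=0$.

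The main obstacle is the a priori bootstrap, because the identity $\boldsymbol{\Gamma}_n-\boldsymbol{\gamma}=\boldsymbol{M}_{n+1}(\boldsymbol{\gamma})+\boldsymbol{\Delta}_n(\boldsymbol{\gamma})$ folds $\boldsymbol{\Delta}_n$ back into its own recursion through the quadratic term $\boldsymbol{R}(\boldsymbol{\Gamma}_n-\boldsymbol{\gamma})$. I would close this loop in two stages: first extract a coarse bound $\|\boldsymbol{\Delta}_n\|=o(n^{\lambda^{(1)}_p+\epsilon})$ for every small $\epsilon>0$ by plugging the qualitative estimate $\|\boldsymbol{\Gamma}_n-\boldsymbol{\gamma}\|=o(1)$ from Theorem~\ref{thm:sa-slln} into the iteration, then upgrade to the sharp order by re-running the iteration with this improved bound and the sharp rates on $\boldsymbol{M}_{n+1}(\boldsymbol{\gamma})$ from Lemmas~\ref{lem:2}--\ref{lem:3}. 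A secondary technical point is making the asymptotic in~\eqref{eq:c} quantitative enough that the $1+o(1)$ factor can be pulled through the infinite series without disrupting its convergence.
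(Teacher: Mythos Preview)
Your proposal is correct and follows essentially the same route as the paper: derive the one-step recursion for $\boldsymbol{\Delta}_{n+1}(\boldsymbol{\gamma})$, project onto the eigenvectors, iterate, and bootstrap the quadratic feedback. For the obstacle you flag, the paper's concrete device is to first observe $\|\boldsymbol{\Delta}_n(\boldsymbol{\gamma})\|\to 0$ (from $\|\boldsymbol{\Gamma}_n-\boldsymbol{\gamma}\|\to 0$ and $\|\boldsymbol{M}_n(\boldsymbol{\gamma})\|\to 0$) and then linearize the feedback via $\|\boldsymbol{\Delta}_n\|^2\le\delta\|\boldsymbol{\Delta}_n\|$ for large $n$, which shifts the effective exponent to $\lambda^{(1)}_p(\boldsymbol{\gamma})+\delta$ and gives $\|\boldsymbol{\Delta}_n(\boldsymbol{\gamma})\|=O\bigl(n^{\lambda^{(1)}_p(\boldsymbol{\gamma})+\delta}\bigr)$ in a single pass---exactly the coarse bound you need before the sharp eigenvector analysis.
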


\begin{proof}
We first show that $\|\boldsymbol{\Delta}_{n+1}\left(\boldsymbol{\Gamma}_p\right)\| \to 0$ almost surely. 
Note that 
\begin{align*}
    \|\boldsymbol{\Delta}_{n+1}\left(\boldsymbol{\Gamma}_p\right)\| \leq \|\boldsymbol{\Gamma}_{n+1} - \boldsymbol{\Gamma}_p\| + \|\boldsymbol{M}_{n+1}\left(\boldsymbol{\Gamma}_p\right)\|.
\end{align*}
Theorem~\ref{thm:sa-slln} implies that $\|\boldsymbol{\Gamma}_{n+1} - \boldsymbol{\Gamma}_p\| \to 0$ almost surely. Also, from \eqref{eq:lil1}, we get, for all $\epsilon > 0$, 
that, if $ {\lambda}^{(2)}_p\left(\boldsymbol{\Gamma}^{(p)}\right) < -1/2$ almost surely, then 
\begin{align*}
{n}^{1/2-\epsilon}\left\{{\boldsymbol{\nu}^{(2)}_p\left(\boldsymbol{\Gamma}^{(p)}\right)^\top}\boldsymbol{M}_{n+1}\left(\boldsymbol{\Gamma}^{(p)}\right)\right\} \stackrel{a.s.}{\to} 0,
\end{align*}
and from \eqref{eq:lil2} and \eqref{eq:clt3}, we get, for all $\epsilon > 0$ and $i = 1,2$, that, if $ -1/2 \leq {\lambda}^{(i)}_p\left(\boldsymbol{\Gamma}^{(p)}\right) < 0$ almost surely, then 
\begin{align*}
{n}^{-{\lambda}^{(i)}_p\left(\boldsymbol{\Gamma}^{(p)}\right)-\epsilon}\left\{{\boldsymbol{\nu}^{(i)}_p\left(\boldsymbol{\Gamma}^{(p)}\right)^\top}\boldsymbol{M}_{n+1}\left(\boldsymbol{\Gamma}^{(p)}\right)\right\} \stackrel{a.s.}{\to} 0.
\end{align*}
Since $\boldsymbol{\nu}^{(1)}_p\left(\boldsymbol{\Gamma}^{(p)}\right)$ and $\boldsymbol{\nu}^{(2)}_p\left(\boldsymbol{\Gamma}^{(p)}\right)$ are linearly independent and \[\|\boldsymbol{M}_{n+1}\left(\boldsymbol{\Gamma}_p\right)\|^2 
= \left(\begin{pmatrix}1 & 0\end{pmatrix}^{\top}\boldsymbol{M}_{n+1}\left(\boldsymbol{\Gamma}_p\right)\right)^2 + \left(\begin{pmatrix}0 & 1\end{pmatrix}^{\top}\boldsymbol{M}_{n+1}\left(\boldsymbol{\Gamma}_p\right)\right)^2,\]we conclude, for all $\epsilon > 0$, that, 
\begin{align}\label{eq:delta7}n^{-{\lambda}^{(1)}_p\left(\boldsymbol{\Gamma}^{(p)}\right)-\epsilon}\|\boldsymbol{M}_{n+1}\left(\boldsymbol{\Gamma}_p\right)\| \stackrel{a.s.}{\to} 0.\end{align}  Consequently, $\|\boldsymbol{\Delta}_{n+1}\left(\boldsymbol{\Gamma}_p\right)\|^2 \to 0$ almost surely. Next, define, for $\boldsymbol{\gamma} \in \operatorname{supp}\left(\boldsymbol{\Gamma}^{(p)}\right)$,
\begin{align}\label{eq:delta3}
r_{n+1}\left(\boldsymbol{\gamma}\right) = {\boldsymbol{h}_p\left(\boldsymbol{\Gamma}_n\right) - \boldsymbol{J}_p\left(\boldsymbol{\gamma}\right)\left(\boldsymbol{\Gamma}_{n} - \boldsymbol{\gamma}\right)}, \quad \rho_{n+1}\left(\boldsymbol{\gamma}\right) = \left(e^{\frac{\boldsymbol{J}_p\left(\boldsymbol{\gamma}\right)}{n+1}} - \boldsymbol{I} - \frac{\boldsymbol{J}_p\left(\boldsymbol{\gamma}\right)}{n+1}\right)\boldsymbol{M}_n\left(\boldsymbol{\gamma}\right).\end{align} 
Note that, there exists a random variable $0 < C < \infty$  
such that almost surely
\begin{align}\label{eq:delta4}
    \|r_{n+1}\left(\boldsymbol{\Gamma}_p\right)\| &\leq C\|\boldsymbol{\Gamma}_n - \boldsymbol{\Gamma}_p\|^2
    \leq 2C\left(\|\boldsymbol{M}_n\left(\boldsymbol{\Gamma}_p\right)\|^2 + \|\boldsymbol{\Delta}_{n}\left(\boldsymbol{\Gamma}_p\right)\|^2\right).
\end{align}
Fix $\delta > 0$. As $2C\left(\boldsymbol{\Gamma}_p\right)\|\boldsymbol{\Delta}_{n}\left(\boldsymbol{\Gamma}_p\right)\| \to 0$ almost surely, there exists an positive integer valued random variable $N_0$ such that, almost surely, for all $n \geq N_0$, 
\begin{align}\label{eq:delta8}
    \|r_{n+1}\left(\boldsymbol{\Gamma}_p\right)\|
    \leq 2C\|\boldsymbol{M}_n\left(\boldsymbol{\Gamma}_p\right)\|^2 + \delta\|\boldsymbol{\Delta}_{n}\left(\boldsymbol{\Gamma}_p\right)\|.
\end{align}
Also, observe that, there exists a random variable $0 < D < \infty$ such that almost surely
\begin{align}\label{eq:delta5}
    \|\rho_{n+1}\left(\boldsymbol{\Gamma}_p\right)\| \leq \frac{D\|M_{n}\left(\boldsymbol{\Gamma}_p\right)\|}{(n+1)^2}.
\end{align}
From \eqref{eq:not1} and \eqref{eq:delta3}, we have 
\begin{align}\label{eq:delta9}
    \boldsymbol{\Delta}_{n+1}\left(\boldsymbol{\Gamma}_p\right) &= \boldsymbol{\Gamma}_{n+1} - \boldsymbol{\Gamma}_p - \boldsymbol{M}_{n+1}\left(\boldsymbol{\Gamma}_p\right) \nonumber \\ &= \boldsymbol{\Gamma}_{n} - \boldsymbol{\Gamma}_p + \frac{\boldsymbol{h}_p\left(\boldsymbol{\Gamma_n}\right)}{n+1} + \frac{\boldsymbol{e}_{n+1}}{n+1} - \frac{\boldsymbol{e}_{n+1}}{n+1} - e^{\frac{\boldsymbol{J}_p\left(\boldsymbol{\Gamma}_p\right)}{n+1}}\boldsymbol{M}_n\left(\boldsymbol{\Gamma}_p\right)
    \nonumber \\ &= \left(1+\frac{\boldsymbol{J}_p\left(\boldsymbol{\Gamma}_p\right)}{n+1}\right) \boldsymbol{\Delta}_{n}\left(\boldsymbol{\Gamma}_p\right) + \frac{r_{n+1}\left(\boldsymbol{\Gamma}_p\right)}{n+1} + \rho_{n+1}\left(\boldsymbol{\Gamma}_p\right),
\end{align}
so that, \eqref{eq:delta8} and \eqref{eq:delta5} imply, almost surely, for all $n \geq N_0$, 
\begin{align*}
    \|\boldsymbol{\Delta}_{n+1}\left(\boldsymbol{\Gamma}_p\right)\| &\leq \left(1+\frac{\lambda^{(1)}_p\left(\boldsymbol{\Gamma}_p\right)}{n+1}\right) \|\boldsymbol{\Delta}_{n}\left(\boldsymbol{\Gamma}_p\right)\| + \frac{\|r_{n+1}\left(\boldsymbol{\Gamma}_p\right)\|}{n+1} + \|\rho_{n+1}\left(\boldsymbol{\Gamma}_p\right)\|
    \\ &\leq \left(1+\frac{\lambda^{(1)}_p\left(\boldsymbol{\Gamma}_p\right)+\delta}{n+1}\right) \|\boldsymbol{\Delta}_{n}\left(\boldsymbol{\Gamma}_p\right)\| + \frac{2C\|M_{n}\left(\boldsymbol{\Gamma}_p\right)\|^2}{n+1} + \frac{D\|M_{n}\left(\boldsymbol{\Gamma}_p\right)\|}{(n+1)^2}\\
    &\leq \exp\left\{\frac{\lambda^{(1)}_p\left(\boldsymbol{\Gamma}_p\right)+\delta}{n+1}\right\}\|\boldsymbol{\Delta}_{n}\left(\boldsymbol{\Gamma}_p\right)\| + \frac{2C\|M_{n}\left(\boldsymbol{\Gamma}_p\right)\|^2}{n+1} + \frac{D\|M_{n}\left(\boldsymbol{\Gamma}_p\right)\|}{(n+1)^2}
    \\&\leq \exp\left\{\sum_{k=N_0}^{n}\frac{\lambda^{(1)}_p\left(\boldsymbol{\Gamma}_p\right)+\delta}{k+1}\right\}\left[\|\boldsymbol{\Delta}_{N_0}\left(\boldsymbol{\Gamma}_p\right)\|\right.\\ 
    &\hspace{1.5cm}\left.+\sum_{j = N_0}^n\exp\left\{-\sum_{k=N_0}^{j}\frac{\lambda^{(1)}_p\left(\boldsymbol{\Gamma}_p\right)+\delta}{k+1}\right\}\left(\frac{2C\|M_{j}\left(\boldsymbol{\Gamma}_p\right)\|^2}{j+1} + \frac{D\|M_{j}\left(\boldsymbol{\Gamma}_p\right)\|}{(j+1)^2}\right)\right].
\end{align*}
Using 
\eqref{eq:delta7} (with $\epsilon < 
1- \delta$), the facts that almost surely ${\lambda}^{(1)}_p\left(\boldsymbol{\Gamma}^{(p)}\right) \geq -1$ and 
\[
   \exp\left\{\sum_{k=N_0}^{n}\frac{\lambda^{(1)}_p\left(\boldsymbol{\Gamma}_p\right)+\delta}{k+1}\right\} = O_p\left(n^{\lambda^{(1)}_p\left(\boldsymbol{\Gamma}_p\right)+\delta}\right), \quad \text{ as } j \to \infty,
\]
we get that, almost surely, 
$\|\boldsymbol{\Delta}_{n+1}\left(\boldsymbol{\Gamma}_p\right)\| = O_p\left(n^{\lambda^{(1)}_p\left(\boldsymbol{\Gamma}_p\right)+\delta}\right)$ as $n \to \infty$. Consequently, \eqref{eq:delta4} 
and \eqref{eq:delta7} (with $\epsilon = \delta$) imply that, 
almost surely
\begin{align}\label{eq:delta10}
     \|r_{n+1}\left(\boldsymbol{\Gamma}_p\right)\| = O_p\left(n^{2\lambda^{(1)}_p\left(\boldsymbol{\Gamma}_p\right)+2\delta}\right), \quad n \to \infty.
\end{align}
From \eqref{eq:delta9}, we get, for any $N \geq 1$,
\begin{align*}
    &{\boldsymbol{\nu}^{(i)}_p\left(\boldsymbol{\Gamma}^{(p)}\right)^\top}\boldsymbol{\Delta}_{n+1}\left(\boldsymbol{\Gamma}_p\right) = \left(1+\frac{{{\lambda}^{(i)}_p\left(\boldsymbol{\Gamma}^{(p)}\right)}}{n+1}\right) {\boldsymbol{\nu}^{(i)}_p\left(\boldsymbol{\Gamma}^{(p)}\right)^\top}\boldsymbol{\Delta}_{n}\left(\boldsymbol{\Gamma}_p\right) \\ & \hspace{5 cm} + \frac{{\boldsymbol{\nu}^{(i)}_p\left(\boldsymbol{\Gamma}^{(p)}\right)^\top}r_{n+1}\left(\boldsymbol{\Gamma}_p\right)}{n+1} + {\boldsymbol{\nu}^{(i)}_p\left(\boldsymbol{\Gamma}^{(p)}\right)^\top}\rho_{n+1}\left(\boldsymbol{\Gamma}_p\right)
    \\ &= \prod_{k=N}^{n}\left(1+\frac{{{\lambda}^{(i)}_p\left(\boldsymbol{\Gamma}^{(p)}\right)}}{k+1}\right) \left[{\boldsymbol{\nu}^{(i)}_p\left(\boldsymbol{\Gamma}^{(p)}\right)^\top}\boldsymbol{\Delta}_{n}\left(\boldsymbol{\Gamma}_p\right)\right.\\ 
    &\hspace{1.5cm}\left.+\sum_{j = N}^n\prod_{k=N}^{j}\left(1+\frac{{{\lambda}^{(i)}_p\left(\boldsymbol{\Gamma}^{(p)}\right)}}{k+1}\right)^{-1}\left(\frac{{\boldsymbol{\nu}^{(i)}_p\left(\boldsymbol{\Gamma}^{(p)}\right)^\top}r_{k+1}\left(\boldsymbol{\Gamma}_p\right)}{k+1} + {\boldsymbol{\nu}^{(i)}_p\left(\boldsymbol{\Gamma}^{(p)}\right)^\top}\rho_{k+1}\left(\boldsymbol{\Gamma}_p\right)\right)\right].
\end{align*}
Observe that, \eqref{eq:delta10} implies that, almost surely
\[ 
    \frac{{\boldsymbol{\nu}^{(i)}_p\left(\boldsymbol{\Gamma}^{(p)}\right)^\top}r_{n+1}\left(\boldsymbol{\Gamma}_p\right)}{n+1} = O_p\left(n^{2\lambda^{(1)}_p\left(\boldsymbol{\Gamma}_p\right)+2\delta-1}\right), \quad n \to \infty,
\]
and, \eqref{eq:delta7}, \eqref{eq:delta5} imply that, almost surely
\[ 
    {\boldsymbol{\nu}^{(i)}_p\left(\boldsymbol{\Gamma}^{(p)}\right)^\top}\rho_{n+1}\left(\boldsymbol{\Gamma}_p\right) = O_p\left(n^{\lambda^{(1)}_p\left(\boldsymbol{\Gamma}_p\right)+2\delta-2}\right), \quad n \to \infty.
\]
Note that ${\lambda^{(1)}_p\left(\boldsymbol{\Gamma}_p\right)+2\delta-2} \leq {2\lambda^{(1)}_p\left(\boldsymbol{\Gamma}_p\right)+2\delta-1}$ and almost surely
\[
  \lim_{n \to \infty}\frac{1}{n^{{{\lambda}^{(i)}_p\left(\boldsymbol{\Gamma}^{(p)}\right)}}}\prod_{k=N}^{n}\left(1+\frac{{{\lambda}^{(i)}_p\left(\boldsymbol{\Gamma}^{(p)}\right)}}{k+1}\right) < \infty.
\]
Now consider the case $i = 2$. Note that we can always choose $\delta$ such that almost surely $2\lambda^{(1)}_p\left(\boldsymbol{\Gamma}_p\right) - \lambda^{(2)}_p\left(\boldsymbol{\Gamma}_p\right)+2\delta \neq 0$. If $2\lambda^{(1)}_p\left(\boldsymbol{\Gamma}_p\right) - \lambda^{(2)}_p\left(\boldsymbol{\Gamma}_p\right)+2\delta > 0$ almost surely, then for all sufficiently large $N$, we have, almost surely,
\begin{align*}
\sum_{j = N}^n\prod_{k=N}^{j}\left(1+\frac{{{\lambda}^{(2)}_p\left(\boldsymbol{\Gamma}^{(p)}\right)}}{k+1}\right)^{-1}\left(\frac{{\boldsymbol{\nu}^{(2)}_p\left(\boldsymbol{\Gamma}^{(p)}\right)^\top}r_{k+1}\left(\boldsymbol{\Gamma}_p\right)}{k+1} + {\boldsymbol{\nu}^{(2)}_p\left(\boldsymbol{\Gamma}^{(p)}\right)^\top}\rho_{k+1}\left(\boldsymbol{\Gamma}_p\right)\right) \\ = O_p\left(n^{2\lambda^{(1)}_p\left(\boldsymbol{\Gamma}_p\right) - \lambda^{(2)}_p\left(\boldsymbol{\Gamma}_p\right)+2\delta}\right), \quad n \to \infty,\end{align*}
and so, in this case, for sufficiently small $\delta$,
\begin{align*}
     &\lim_{n \to \infty}\frac{{\boldsymbol{\nu}^{(2)}_p\left(\boldsymbol{\Gamma}^{(p)}\right)^\top}\boldsymbol{\Delta}_{n+1}\left(\boldsymbol{\Gamma}^{(p)}\right)}{n^{{{\lambda}^{(1)}_p\left(\boldsymbol{\Gamma}^{(p)}\right)}}}\\ &\hspace{2cm}\stackrel{a.s.}{=} \lim_{n \to \infty}\frac{1}{n^{{{\lambda}^{(2)}_p\left(\boldsymbol{\Gamma}^{(p)}\right)}}}\prod_{k=N}^{n}\left(1+\frac{{{\lambda}^{(2)}_p\left(\boldsymbol{\Gamma}^{(p)}\right)}}{k+1}\right)O_p\left({n^{{{\lambda}^{(1)}_p\left(\boldsymbol{\Gamma}^{(p)}\right)}+\delta}}\right)
     \stackrel{a.s.}{=} 0 =: \boldsymbol{\Delta}^{(2,p)}.
\end{align*}
If almost surely $2\lambda^{(1)}_p\left(\boldsymbol{\Gamma}_p\right) - \lambda^{(2)}_p\left(\boldsymbol{\Gamma}_p\right)+2\delta < 0$, for all sufficiently large $N$, almost surely,
\begin{align*}
\lim_{n\to\infty}\sum_{j = N}^n\prod_{k=N}^{j}\left(1+\frac{{{\lambda}^{(2)}_p\left(\boldsymbol{\Gamma}^{(p)}\right)}}{k+1}\right)^{-1}\left(\frac{{\boldsymbol{\nu}^{(2)}_p\left(\boldsymbol{\Gamma}^{(p)}\right)^\top}r_{k+1}\left(\boldsymbol{\Gamma}_p\right)}{k+1} + {\boldsymbol{\nu}^{(2)}_p\left(\boldsymbol{\Gamma}^{(p)}\right)^\top}\rho_{k+1}\left(\boldsymbol{\Gamma}_p\right)\right) < \infty,\end{align*}
and so, in this case, 
\begin{align*}
     &\lim_{n \to \infty}\frac{{\boldsymbol{\nu}^{(2)}_p\left(\boldsymbol{\Gamma}^{(p)}\right)^\top}\boldsymbol{\Delta}_{n+1}\left(\boldsymbol{\Gamma}^{(p)}\right)}{n^{{{\lambda}^{(1)}_p\left(\boldsymbol{\Gamma}^{(p)}\right)}}}\\ &\stackrel{a.s.}{=} \lim_{n \to \infty}\frac{1}{n^{{{\lambda}^{(2)}_p\left(\boldsymbol{\Gamma}^{(p)}\right)}}}\prod_{k=N}^{n}\left(1+\frac{{{\lambda}^{(2)}_p\left(\boldsymbol{\Gamma}^{(p)}\right)}}{k+1}\right){n^{{{\lambda}^{(1)}_p\left(\boldsymbol{\Gamma}^{(p)}\right)}-{\lambda}^{(2)}_p\left(\boldsymbol{\Gamma}^{(p)}\right)}}
     \\ &\times\sum_{j = N}^n\prod_{k=N}^{j}\left(1+\frac{{{\lambda}^{(2)}_p\left(\boldsymbol{\Gamma}^{(p)}\right)}}{k+1}\right)^{-1}\left(\frac{{\boldsymbol{\nu}^{(2)}_p\left(\boldsymbol{\Gamma}^{(p)}\right)^\top}r_{k+1}\left(\boldsymbol{\Gamma}_p\right)}{k+1} + {\boldsymbol{\nu}^{(2)}_p\left(\boldsymbol{\Gamma}^{(p)}\right)^\top}\rho_{k+1}\left(\boldsymbol{\Gamma}_p\right)\right)\stackrel{a.s.}{=} \boldsymbol{\Delta}^{(2,p)}.
\end{align*}
Finally consider the case $i = 1$. Here we can always choose $\delta$ such that $\lambda^{(1)}_p\left(\boldsymbol{\Gamma}_p\right) +2\delta < 0$. Consequently, for all sufficiently large $N$,
\begin{align*}
\lim_{n\to\infty}\sum_{j = N}^n\prod_{k=N}^{j}\left(1+\frac{{{\lambda}^{(1)}_p\left(\boldsymbol{\Gamma}^{(p)}\right)}}{k+1}\right)^{-1}\left(\frac{{\boldsymbol{\nu}^{(1)}_p\left(\boldsymbol{\Gamma}^{(p)}\right)^\top}r_{k+1}\left(\boldsymbol{\Gamma}_p\right)}{k+1} + {\boldsymbol{\nu}^{(1)}_p\left(\boldsymbol{\Gamma}^{(p)}\right)^\top}\rho_{k+1}\left(\boldsymbol{\Gamma}_p\right)\right) < \infty,\end{align*}
and so, in this case, 
\begin{align*}
     &\lim_{n \to \infty}\frac{{\boldsymbol{\nu}^{(1)}_p\left(\boldsymbol{\Gamma}^{(p)}\right)^\top}\boldsymbol{\Delta}_{n+1}\left(\boldsymbol{\Gamma}^{(p)}\right)}{n^{{{\lambda}^{(1)}_p\left(\boldsymbol{\Gamma}^{(p)}\right)}}}\\ &\stackrel{a.s.}{=} \lim_{n \to \infty}\frac{1}{n^{{{\lambda}^{(1)}_p\left(\boldsymbol{\Gamma}^{(p)}\right)}}}\prod_{k=N}^{n}\left(1+\frac{{{\lambda}^{(1)}_p\left(\boldsymbol{\Gamma}^{(p)}\right)}}{k+1}\right) \\ &\times \sum_{j = N}^n\prod_{k=N}^{j}\left(1+\frac{{{\lambda}^{(1)}_p\left(\boldsymbol{\Gamma}^{(p)}\right)}}{k+1}\right)^{-1}\left(\frac{{\boldsymbol{\nu}^{(1)}_p\left(\boldsymbol{\Gamma}^{(p)}\right)^\top}r_{k+1}\left(\boldsymbol{\Gamma}_p\right)}{k+1} + {\boldsymbol{\nu}^{(1)}_p\left(\boldsymbol{\Gamma}^{(p)}\right)^\top}\rho_{k+1}\left(\boldsymbol{\Gamma}_p\right)\right)
     =: \boldsymbol{\Delta}^{(1,p)}.
\end{align*}
This completes the proof.
\end{proof}

We get the following results by combining Lemma~\eqref{lem:1}, Lemma~\eqref{lem:2}, Lemma~\eqref{lem:3} and Lemma~\eqref{lem:4}.

\begin{theorem}
Let $c_1, c_2 \in \mathbb{R}$ and $p \neq 7/8$ be such that $\lambda^{(2)}_p\left(\boldsymbol{\Gamma}^{(p)}\right) < \lambda^{(1)}_p\left(\boldsymbol{\Gamma}^{(p)}\right) = - \frac{1}{2}$ almost surely. 
 \begin{itemize}
    \item[i)] The following central limit theorem holds - 
    \begin{align}\label{eq:mclt2}  &\sqrt{\frac{n}{\log n}}\left\{\left(c_1\boldsymbol{\nu}^{(1)}_p\left(\boldsymbol{\Gamma}^{(p)}\right) + c_2\boldsymbol{\nu}^{(2)}_p\left(\boldsymbol{\Gamma}^{(p)}\right)\right)^\top\left(\boldsymbol{\Gamma}_n - \boldsymbol{\Gamma}^{(p)}\right)\right\} \stackrel{d}{\to} N\left(0, {\Sigma}^{(2)}_{c_1,c_2,p}\right),
    \end{align}
    where
    \begin{align}\label{eq:sig2}
        {\Sigma}^{(2)}_{c_1,c_2,p} = c^2_1\boldsymbol{\nu}^{(1)}_p\left(\boldsymbol{\Gamma}^{(p)}\right)^{\top}\sigma\left(\boldsymbol{\Gamma}^{(p)}\right)\boldsymbol{\nu}^{(1)}_p\left(\boldsymbol{\Gamma}^{(p)}\right).
    \end{align}
    \item[ii)] The following law of iterated logarithms holds almost surely -  
    \begin{align}\label{eq:mlil2}  &\limsup_{n \to \infty}\sqrt{\frac{n}{2\log n\log\log\log n}}\left\{\left(c_1\boldsymbol{\nu}^{(1)}_p\left(\boldsymbol{\Gamma}^{(p)}\right) + c_2\boldsymbol{\nu}^{(2)}_p\left(\boldsymbol{\Gamma}^{(p)}\right)\right)^\top\left(\boldsymbol{\Gamma}_n - \boldsymbol{\Gamma}^{(p)}\right)\right\} \\ = - &\liminf_{n \to \infty}\sqrt{\frac{n}{2\log n\log\log\log n}}\left\{\left(c_1\boldsymbol{\nu}^{(2)}_p\left(\boldsymbol{\Gamma}^{(p)}\right) + c_2\boldsymbol{\nu}^{(2)}_p\left(\boldsymbol{\Gamma}^{(p)}\right)\right)^\top\left(\boldsymbol{\Gamma}_n - \boldsymbol{\Gamma}^{(p)}\right)\right\} \nonumber \\ =\textcolor{white}{+}&\sqrt{{\Sigma}^{(2)}_{c_1,c_2,p}} . \nonumber 
    \end{align}
    \item[iii)] The following quadratic strong law holds almost surely -  
    \begin{align}\label{eq:mqsl2}  &\lim_{n \to \infty} \frac{1}{\log \log n}\sum_{k=2}^n\left(\frac{1}{\log k}\right)^2\left\{\left(c_1\boldsymbol{\nu}^{(1)}_p\left(\boldsymbol{\Gamma}^{(p)}\right) + c_2\boldsymbol{\nu}^{(2)}_p\left(\boldsymbol{\Gamma}^{(p)}\right)\right)^\top\left(\boldsymbol{\Gamma}_k - \boldsymbol{\Gamma}^{(p)}\right)\right.\\ &\hspace{3cm}\times \left.\left(\boldsymbol{\Gamma}_k - \boldsymbol{\Gamma}^{(p)}\right)^{\top}\left(c_1\boldsymbol{\nu}^{(1)}_p\left(\boldsymbol{\Gamma}^{(p)}\right) + c_2\boldsymbol{\nu}^{(2)}_p\left(\boldsymbol{\Gamma}^{(p)}\right)\right)\right\} = {{\Sigma}^{(2)}_{c_1,c_2,p}}. \nonumber
    \end{align}
    \end{itemize}    
\end{theorem}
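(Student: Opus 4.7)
The plan is to condition on the event $\{\boldsymbol{\Gamma}^{(p)} = \boldsymbol{\gamma}\}$ for each $\boldsymbol{\gamma} \in \operatorname{supp}(\boldsymbol{\Gamma}^{(p)})$, establish the three assertions on this event via the decomposition
\[
\boldsymbol{\Gamma}_{n+1} - \boldsymbol{\gamma} = \boldsymbol{M}_{n+1}(\boldsymbol{\gamma}) + \boldsymbol{\Delta}_{n+1}(\boldsymbol{\gamma})
\]
from \eqref{eq:not1}, and then combine. Expand $c_1 \boldsymbol{\nu}^{(1)}_p(\boldsymbol{\gamma}) + c_2 \boldsymbol{\nu}^{(2)}_p(\boldsymbol{\gamma})$ against the two eigendirections; since the two eigenvalues lie in different regimes ($\lambda^{(1)}_p(\boldsymbol{\gamma}) = -1/2$ borderline and $\lambda^{(2)}_p(\boldsymbol{\gamma}) < -1/2$ strict), each projection is handled by a different preceding lemma.

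Along $\boldsymbol{\nu}^{(1)}_p(\boldsymbol{\gamma})$, the martingale part is exactly the borderline case of Lemma~\ref{lem:2}, which delivers the CLT at rate $\sqrt{n/\log n}$ with variance $\boldsymbol{\nu}^{(1)}_p(\boldsymbol{\gamma})^\top \sigma(\boldsymbol{\gamma}) \boldsymbol{\nu}^{(1)}_p(\boldsymbol{\gamma})$, the LIL at rate $\sqrt{n/(2\log n\log\log\log n)}$, and the matching QSL. Lemma~\ref{lem:4} (whose hypothesis $-1/2 \leq \lambda^{(1)}_p < 0$ is met with equality here) ensures $n^{1/2}\boldsymbol{\nu}^{(1)}_p(\boldsymbol{\gamma})^\top\boldsymbol{\Delta}_{n+1}(\boldsymbol{\gamma})$ converges a.s., so the remainder contribution is $O_{a.s.}(n^{-1/2})$, which is $o(\sqrt{\log n/n})$ and hence dies at every one of the three scales in \eqref{eq:mclt2}--\eqref{eq:mqsl2}.

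Along $\boldsymbol{\nu}^{(2)}_p(\boldsymbol{\gamma})$, Lemma~\ref{lem:1} applies to the martingale part, giving $\boldsymbol{\nu}^{(2)}_p(\boldsymbol{\gamma})^\top\boldsymbol{M}_{n+1}(\boldsymbol{\gamma}) = O_P(n^{-1/2})$ for the CLT step and the sharper $O_{a.s.}(\sqrt{\log\log n/n})$ for the LIL step; Lemma~\ref{lem:4} again gives $\boldsymbol{\nu}^{(2)}_p(\boldsymbol{\gamma})^\top\boldsymbol{\Delta}_{n+1}(\boldsymbol{\gamma}) = O_{a.s.}(n^{-1/2})$. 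Multiplying by $\sqrt{n/\log n}$ (resp.\ the LIL/QSL normalizers) kills each of these terms. Slutsky's theorem then yields \eqref{eq:mclt2}; taking $\limsup$/$\liminf$ and combining with a Toeplitz-type summation yield \eqref{eq:mlil2} and \eqref{eq:mqsl2}. Since only the $c_1\boldsymbol{\nu}^{(1)}_p(\boldsymbol{\gamma})$-component of $\boldsymbol{M}_{n+1}(\boldsymbol{\gamma})$ survives, the limit depends on $c_1$ but not $c_2$, matching the definition \eqref{eq:sig2}. Finally, because \eqref{eq:eienvec1}--\eqref{eq:eienvec2} show that $\boldsymbol{\nu}^{(i)}_p(\boldsymbol{\Gamma}^{(p)})$ is non-random, the case-by-case assertions on $\{\boldsymbol{\Gamma}^{(p)} = \boldsymbol{\gamma}\}$ glue to the unconditional statements.

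The main obstacle is \eqref{eq:mqsl2}: unlike the CLT or LIL, the QSL is a sum of quadratic forms in $\boldsymbol{\Gamma}_k - \boldsymbol{\Gamma}^{(p)}$, so one must control the cross-term between the $\boldsymbol{\nu}^{(1)}_p$ and $\boldsymbol{\nu}^{(2)}_p$ components of $\boldsymbol{M}_{k+1} + \boldsymbol{\Delta}_{k+1}$ after summation and weighting by $1/\log^2 k$. A Cauchy--Schwarz bound combining Lemma~\ref{lem:2}(iii) along $\boldsymbol{\nu}^{(1)}_p$ with the a.s.\ decay in Lemma~\ref{lem:1}(iii) and Lemma~\ref{lem:4} along $\boldsymbol{\nu}^{(2)}_p$, followed by a Kronecker/Toeplitz argument, should show that the cross-term is $o(\log\log n)$, leaving only the $c_1^2 \boldsymbol{\nu}^{(1)}_p(\boldsymbol{\gamma})^\top\sigma(\boldsymbol{\gamma})\boldsymbol{\nu}^{(1)}_p(\boldsymbol{\gamma})$ contribution.
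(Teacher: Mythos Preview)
Your proposal is correct and follows essentially the same route as the paper: the decomposition $\boldsymbol{\Gamma}_{n+1}-\boldsymbol{\gamma}=\boldsymbol{M}_{n+1}(\boldsymbol{\gamma})+\boldsymbol{\Delta}_{n+1}(\boldsymbol{\gamma})$, Lemma~\ref{lem:2} on the $\boldsymbol{\nu}^{(1)}_p$-component, Lemma~\ref{lem:1} on the $\boldsymbol{\nu}^{(2)}_p$-component, and Lemma~\ref{lem:4} for the remainder are exactly what the paper invokes for \eqref{eq:mclt2} and \eqref{eq:mlil2}. For \eqref{eq:mqsl2} the paper controls the cross-terms by plugging in the pointwise LIL estimates \eqref{eq:lil1} and \eqref{eq:lil2} termwise and summing, rather than your Cauchy--Schwarz on the sums, but both arguments yield the required $o(\log\log n)$ bound.
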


\begin{proof}
As $\lambda^{(2)}_p\left(\boldsymbol{\Gamma}^{(p)}\right) < \lambda^{(1)}_p\left(\boldsymbol{\Gamma}^{(p)}\right) = - \frac{1}{2}$ almost surely, from \eqref{eq:not1}, \eqref{eq:clt1}, \eqref{eq:clt2} and \eqref{eq:delta1}, \eqref{eq:mclt2} follows. Similarly, from \eqref{eq:not1}, \eqref{eq:lil1}, \eqref{eq:lil2} and \eqref{eq:delta1}, \eqref{eq:mlil2} follows.
To show \eqref{eq:mqsl2}, we show that 
\begin{align}\label{q1}
    &\lim_{n \to \infty} \frac{1}{\log \log n}\sum_{k=2}^n\left(\frac{1}{\log k}\right)^2\left\{\left(c_1\boldsymbol{\nu}^{(1)}_p\left(\boldsymbol{\Gamma}^{(p)}\right) + c_2\boldsymbol{\nu}^{(2)}_p\left(\boldsymbol{\Gamma}^{(p)}\right)\right)^\top\boldsymbol{M}_k\right.\\ &\hspace{3cm}\times \left.\boldsymbol{M}_k^{\top}\left(c_1\boldsymbol{\nu}^{(1)}_p\left(\boldsymbol{\Gamma}^{(p)}\right) + c_2\boldsymbol{\nu}^{(2)}_p\left(\boldsymbol{\Gamma}^{(p)}\right)\right)\right\} = {{\Sigma}^{(2)}_{c_1,c_2,p}}. \nonumber
\end{align}
\begin{align}\label{q2}
    &\lim_{n \to \infty} \frac{1}{\log \log n}\sum_{k=2}^n\left(\frac{1}{\log k}\right)^2\left\{\left(c_1\boldsymbol{\nu}^{(1)}_p\left(\boldsymbol{\Gamma}^{(p)}\right) + c_2\boldsymbol{\nu}^{(2)}_p\left(\boldsymbol{\Gamma}^{(p)}\right)\right)^\top\boldsymbol{M}_k\right.\\ &\hspace{3cm}\times \left.\boldsymbol{\Delta}_k^{\top}\left(c_1\boldsymbol{\nu}^{(1)}_p\left(\boldsymbol{\Gamma}^{(p)}\right) + c_2\boldsymbol{\nu}^{(2)}_p\left(\boldsymbol{\Gamma}^{(p)}\right)\right)\right\} \nonumber
    \\ =&\lim_{n \to \infty} \frac{1}{\log \log n}\sum_{k=2}^n\left(\frac{1}{\log k}\right)^2\left\{\left(c_1\boldsymbol{\nu}^{(1)}_p\left(\boldsymbol{\Gamma}^{(p)}\right) + c_2\boldsymbol{\nu}^{(2)}_p\left(\boldsymbol{\Gamma}^{(p)}\right)\right)^\top\boldsymbol{\Delta}_k\right. \nonumber\\ &\hspace{3cm}\times \left.\boldsymbol{\Delta}_k^{\top}\left(c_1\boldsymbol{\nu}^{(1)}_p\left(\boldsymbol{\Gamma}^{(p)}\right) + c_2\boldsymbol{\nu}^{(2)}_p\left(\boldsymbol{\Gamma}^{(p)}\right)\right)\right\} = 0. \nonumber
\end{align}
Since $\boldsymbol{\nu}^{(1)}_p\left(\boldsymbol{\Gamma}^{(p)}\right)$ and $\boldsymbol{\nu}^{(2)}_p\left(\boldsymbol{\Gamma}^{(p)}\right)$ are linearly independent and \[\|\boldsymbol{x}\|^2 
= \left(\begin{pmatrix}1 & 0\end{pmatrix}^{\top}\boldsymbol{x}\right)^2 + \left(\begin{pmatrix}0 & 1\end{pmatrix}^{\top}\boldsymbol{x}\right)^2,\] from \eqref{eq:delta1}, \eqref{eq:lil1} and  \eqref{eq:lil2}, it follows that \[\|\boldsymbol{\Delta}_n\|^2 = O_p\left(\frac{1}{n}\right), \quad \|\boldsymbol{\Delta}_n\|\|\boldsymbol{M}_n\| = O_p\left(\frac{\sqrt{\log n \log \log \log n}{}}{n}\right),\]
implying \eqref{q2}. From \eqref{eq:lil1} and  \eqref{eq:lil2}, we get, for $(i_1, i_2) \in \{(1,2), (2,1) , (2,2)\}$, that 
\begin{align}\label{q3}
    &\lim_{n \to \infty} \frac{1}{\log \log n}\sum_{k=2}^n\left(\frac{1}{\log k}\right)^2\left\{\left(\boldsymbol{\nu}^{(i_1)}_p\left(\boldsymbol{\Gamma}^{(p)}\right)\right)^\top\boldsymbol{M}_k\boldsymbol{M}_k^{\top}\left(\boldsymbol{\nu}^{(i_2)}_p\left(\boldsymbol{\Gamma}^{(p)}\right)\right)\right\} \\ =& O_p\left(\frac{1}{\log \log n}\sum_{k=2}^n\frac{1}{k \log k}\left(\frac{\log \log k \log \log \log k}{\log k}\right)^{1/2}\right) = o_p(1). 
\end{align}
From \eqref{q3} and \eqref{eq:qsl2}, we get \eqref{q1}, implying \eqref{eq:mqsl2}.
\end{proof}

\begin{theorem}
Let $c_1, c_2 \in \mathbb{R}$ and $p \neq 7/8$ be such that $\lambda^{(2)}_p\left(\boldsymbol{\Gamma}^{(p)}\right) < - \frac{1}{2} < \lambda^{(1)}_p\left(\boldsymbol{\Gamma}^{(p)}\right) < 0$ almost surely. Then, there exists a finite random variable $\boldsymbol{L}^{(p)}_{c_1,c_2}$ such that 
\begin{align}\label{eq:mclt3}  &\frac{\left(c_1\boldsymbol{\nu}^{(1)}_p\left(\boldsymbol{\Gamma}^{(p)}\right) + c_2\boldsymbol{\nu}^{(2)}_p\left(\boldsymbol{\Gamma}^{(p)}\right)\right)^\top\left(\boldsymbol{\Gamma}_n - \boldsymbol{\Gamma}^{(p)}\right)}{{{n}^{\lambda^{(1)}_p\left(\boldsymbol{\Gamma}^{(p)}\right)}}} \stackrel{a.s.}{\to} \boldsymbol{L}^{(p)}_{c_1,c_2}. 
\end{align}
\end{theorem}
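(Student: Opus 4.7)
The plan is to exploit the additive decomposition $\boldsymbol{\Gamma}_n - \boldsymbol{\Gamma}^{(p)} = \boldsymbol{M}_n(\boldsymbol{\Gamma}^{(p)}) + \boldsymbol{\Delta}_n(\boldsymbol{\Gamma}^{(p)})$ from \eqref{eq:not1} and then to project along the two linearly independent eigendirections $\boldsymbol{\nu}^{(1)}_p(\boldsymbol{\Gamma}^{(p)})$ and $\boldsymbol{\nu}^{(2)}_p(\boldsymbol{\Gamma}^{(p)})$ separately. Linearity reduces the statement to establishing the almost-sure behavior of the four scalar sequences obtained by picking one of $\boldsymbol{M}_n$ or $\boldsymbol{\Delta}_n$ and one of $\boldsymbol{\nu}^{(1)}_p$ or $\boldsymbol{\nu}^{(2)}_p$, all normalized by $n^{\lambda^{(1)}_p(\boldsymbol{\Gamma}^{(p)})}$, and then summing the four limits with coefficients $c_1$ or $c_2$.

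Since $-\tfrac{1}{2} < \lambda^{(1)}_p(\boldsymbol{\Gamma}^{(p)}) < 0$ almost surely, Lemma~\ref{lem:3} applied with $i=1$ delivers
$\boldsymbol{\nu}^{(1)}_p(\boldsymbol{\Gamma}^{(p)})^\top \boldsymbol{M}_n(\boldsymbol{\Gamma}^{(p)}) / n^{\lambda^{(1)}_p(\boldsymbol{\Gamma}^{(p)})} \to \boldsymbol{M}^{(1,p)}$ almost surely, while Lemma~\ref{lem:4} applied with $i=1$ and $i=2$ yields the corresponding $\boldsymbol{\Delta}$-limits $\boldsymbol{\Delta}^{(1,p)}$ and $\boldsymbol{\Delta}^{(2,p)}$ (both scaled by the same $n^{\lambda^{(1)}_p(\boldsymbol{\Gamma}^{(p)})}$, exactly as the statement of Lemma~\ref{lem:4} requires). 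The only remaining piece is the martingale contribution along the fast-decaying eigendirection, namely
\[
\frac{\boldsymbol{\nu}^{(2)}_p(\boldsymbol{\Gamma}^{(p)})^\top \boldsymbol{M}_n(\boldsymbol{\Gamma}^{(p)})}{n^{\lambda^{(1)}_p(\boldsymbol{\Gamma}^{(p)})}}.
\]
Here the hypothesis $\lambda^{(2)}_p(\boldsymbol{\Gamma}^{(p)}) < -\tfrac{1}{2}$ places us in the regime of Lemma~\ref{lem:1}, whose law of iterated logarithms (part ii) gives the almost-sure bound $\boldsymbol{\nu}^{(2)}_p(\boldsymbol{\Gamma}^{(p)})^\top \boldsymbol{M}_n(\boldsymbol{\Gamma}^{(p)}) = O\bigl(\sqrt{\log\log n / n}\bigr)$; since $\lambda^{(1)}_p(\boldsymbol{\Gamma}^{(p)}) > -\tfrac{1}{2}$ almost surely, this ratio vanishes almost surely.

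Summing the four contributions, the natural candidate for the limit is
\[
   \boldsymbol{L}^{(p)}_{c_1,c_2} \;:=\; c_1\bigl(\boldsymbol{M}^{(1,p)} + \boldsymbol{\Delta}^{(1,p)}\bigr) + c_2\,\boldsymbol{\Delta}^{(2,p)},
\]
which is almost surely finite since each of $\boldsymbol{M}^{(1,p)}$, $\boldsymbol{\Delta}^{(1,p)}$, $\boldsymbol{\Delta}^{(2,p)}$ is, by Lemmas~\ref{lem:3} and~\ref{lem:4}. The main conceptual point, and essentially the only place where the asymmetry $\lambda^{(2)}_p(\boldsymbol{\Gamma}^{(p)}) < -\tfrac{1}{2} < \lambda^{(1)}_p(\boldsymbol{\Gamma}^{(p)})$ is used, is the LIL-based estimate showing that the fast-mode martingale is negligible compared with the intrinsic scaling $n^{\lambda^{(1)}_p(\boldsymbol{\Gamma}^{(p)})}$; once that is in hand, the argument reduces to a direct sum-of-limits application of Lemmas~\ref{lem:1}, \ref{lem:3}, and \ref{lem:4}.
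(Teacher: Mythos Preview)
Your proposal is correct and follows essentially the same approach as the paper: decompose $\boldsymbol{\Gamma}_n - \boldsymbol{\Gamma}^{(p)}$ via \eqref{eq:not1}, invoke Lemma~\ref{lem:3} for the $\boldsymbol{\nu}^{(1)}$-martingale part, Lemma~\ref{lem:4} for both $\boldsymbol{\Delta}$-parts, and the LIL \eqref{eq:lil1} of Lemma~\ref{lem:1} to show the $\boldsymbol{\nu}^{(2)}$-martingale part is $o\bigl(n^{\lambda^{(1)}_p(\boldsymbol{\Gamma}^{(p)})}\bigr)$, arriving at the identical limit $\boldsymbol{L}^{(p)}_{c_1,c_2} = c_1(\boldsymbol{M}^{(1,p)} + \boldsymbol{\Delta}^{(1,p)}) + c_2\boldsymbol{\Delta}^{(2,p)}$.
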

    
\begin{proof}
    As $\lambda^{(2)}_p\left(\boldsymbol{\Gamma}^{(p)}\right) < - \frac{1}{2} < \lambda^{(1)}_p\left(\boldsymbol{\Gamma}^{(p)}\right) < 0$ almost surely, from \eqref{eq:not1}, \eqref{eq:clt3}, \eqref{eq:lil1} and \eqref{eq:delta1}, \eqref{eq:mclt3} follows, with $\boldsymbol{L}^{(p)}_{c_1,c_2} := c_1\left(\boldsymbol{M}^{(1,p)} + \boldsymbol{\Delta}^{(1,p)}\right) + c_2\boldsymbol{\Delta}^{(2,p)}$.
\end{proof}

\section{Proofs of the Main Results}\label{sec:proofs}

All of the proofs use the following basic facts. For $\boldsymbol{u} := \begin{pmatrix}
    1 & -1
\end{pmatrix}^{\top}$, 
\begin{align}\label{eq:main}
   \frac{S_n}{n} - \Lambda_p = {\boldsymbol{u}^\top}\left(\boldsymbol{\Gamma}_n - \boldsymbol{\Gamma}^{(p)}\right).
\end{align}
For $p \leq 1/2$, $\boldsymbol{u} = \boldsymbol{\nu}^{(2)}_p\left(\boldsymbol{\Gamma}^{(p)}\right)$ and  for $1/2 < p < 7/8$, $\boldsymbol{u} = \boldsymbol{\nu}^{(1)}_p\left(\boldsymbol{\Gamma}^{(p)}\right)$. Let 
\begin{align}\label{eq:al-be}
    \alpha = -\frac{4-5p+(2p-1)\sqrt{32p^2-52p+21
   }-\sqrt{-64p^3+161p^2-134p+37}}{2\sqrt{-64p^3+161p^2-134p+37}}, \quad \beta = 1 - \alpha,
\end{align}
so that for $p > 7/8$,
\begin{align}\label{eq:al-be2}
    \boldsymbol{u} = \alpha\boldsymbol{\nu}^{(1)}_p\left(\boldsymbol{\Gamma}^{(p)}\right) + \beta\boldsymbol{\nu}^{(1)}_p\left(\boldsymbol{\Gamma}^{(p)}\right).
\end{align}

\begin{proof}[Proof of Theorem~\ref{thm:slln-rw2mc}]
From \eqref{eq:main} and Theorem~\ref{thm:sa-slln}, \eqref{eq:s_p} is immediate, except the distribution of $\Lambda_p$. For that, observe
\[
   \Lambda_p \stackrel{d}{=} D_1\Lambda_p^{(+)} + D_2\Lambda_p^{(-)} + D_3\Lambda_p^{(\pm)} + D_4\Lambda_p^{(\mp)},
\]
where $\Lambda_p^{(+)}$ (respectively, $\Lambda_p^{(-)}$, $\Lambda_p^{(\pm)}$ and $\Lambda_p^{(\mp)}$) is the almost sure limit of $S_n/n$ conditioned on $X_1 = X_2 = 1$ (respectively, $X_1 = X_2 = -1$, $X_1 = -X_2 = 1, $ and $-X_1 = X_2 = 1$) and $\boldsymbol{D}$ is an independent $\operatorname{Multinomial \left(1;1/4,1/4,1/4,1/4\right)}$ random variable. As $\Lambda_p^{(+)} \stackrel{a.s.}{=} -\Lambda_p^{(-)}$ and $\Lambda_p^{(\pm)} \stackrel{a.s.}{=} -\Lambda_p^{(\mp)}$, it follows that $\mathbb{E}\left(\Lambda_p\right) = 0$. This completes the proof.
\end{proof}

\begin{remark}
For $p < 7/8$, the almost sure convergence of $S_n/n$ can also be established using the relation between the random walk with two memory channels with the multidimensional generalized elephant random walk (see Section~\ref{eg:mgerw}) and Theorem 3.6 of \cite{gerw}. However, the same will not work for $p > 7/8$ as the assumptions of Theorem 3.6 of \cite{gerw} no longer hold in this case.
\end{remark}

\begin{proof}[Proof of Theorem~\ref{thm:clt-rw2mc}]
If $p \in \left(0, \frac{11}{16}\right)\cup\left(\frac{113+\sqrt{97}}{128}, 1\right)$, then, from \eqref{eq:evalues}, we get $\lambda^{(1)}_p\left(\boldsymbol{\Gamma}^{(p)}\right) < -\frac{1}{2}$ almost surely. So, \eqref{eqthm:clt1} follows from \eqref{eq:mclt1}, \eqref{eq:main}, \eqref{eq:al-be2} and the fact that \[{\Sigma}^{(1)}_{0,1,p} = \frac{2}{11-16p}, \quad p \leq \frac{1}{2}, \quad {\Sigma}^{(1)}_{1,0,p} = \frac{2}{11-16p}, \quad p > \frac{1}{2}.\]
Similarly, if $p \in \left\{\frac{11}{16},\frac{113+\sqrt{97}}{128}\right\}$, then, from \eqref{eq:evalues}, we get $\lambda^{(2)}_p\left(\boldsymbol{\Gamma}^{(p)}\right) < \lambda^{(1)}_p\left(\boldsymbol{\Gamma}^{(p)}\right) = -\frac{1}{2}$ almost surely. So, \eqref{eqthm:clt2} follows from \eqref{eq:mclt2}, \eqref{eq:main}, \eqref{eq:al-be2} and the fact that ${\Sigma}^{(1)}_{1,0,11/16} = \frac{2}{3}$. Finally, if $p \in \left(\frac{11}{16}, \frac{7}{8}\right)\cup\left(\frac{7}{8}, \frac{113+\sqrt{97}}{128}\right)$, then, from \eqref{eq:evalues}, we get $\lambda^{(2)}_p\left(\boldsymbol{\Gamma}^{(p)}\right) < -\frac{1}{2} < \lambda^{(1)}_p\left(\boldsymbol{\Gamma}^{(p)}\right) < 0$ almost surely. So, \eqref{eqthm:clt3} follows from \eqref{eq:evalues}, \eqref{eq:mclt3}, \eqref{eq:main} and \eqref{eq:al-be2}.
\end{proof}

\begin{remark}
For $p < 7/8$, the central limit behavior of $S_n/n - \Lambda_p$ can also be established using the relation between the random walk with two memory channels with the multidimensional generalized elephant random walk (see Section~\ref{eg:mgerw}) and Theorem 3.13 of \cite{gerw}. However, the same will not work for $p > 7/8$ as the assumptions of Theorem 3.13 of \cite{gerw} no longer hold in this case.
\end{remark}

\begin{proof}[Proof of Theorem~\ref{thm:lil-rw2mc}]
If $p \in \left(0, \frac{11}{16}\right)$, then, from \eqref{eq:evalues}, we get $\lambda^{(1)}_p\left(\boldsymbol{\Gamma}^{(p)}\right) < -\frac{1}{2}$ almost surely. So, \eqref{eqthm:lil1-a} follows from \eqref{eq:mlil12}, \eqref{eq:mlil13}, \eqref{eq:main} and the fact that \[{\Sigma}^{(1)}_{0,1,p} = \frac{2}{11-16p}, \quad p \leq \frac{1}{2}, \quad {\Sigma}^{(1)}_{1,0,p} = \frac{2}{11-16p}, \quad p > \frac{1}{2}.\]
Similarly, $p \in \left(\frac{113+\sqrt{97}}{128}, 1\right)$, then also $\lambda^{(1)}_p\left(\boldsymbol{\Gamma}^{(p)}\right) < -\frac{1}{2}$ almost surely. So, \eqref{eqthm:lil1-b} follows from \eqref{eq:mlil11}, \eqref{eq:main} and \eqref{eq:al-be2}. Finally, if $p \in \left\{\frac{11}{16},\frac{113+\sqrt{97}}{128}\right\}$, then, from \eqref{eq:evalues}, we get $\lambda^{(2)}_p\left(\boldsymbol{\Gamma}^{(p)}\right) < \lambda^{(1)}_p\left(\boldsymbol{\Gamma}^{(p)}\right) = -\frac{1}{2}$ almost surely. So, \eqref{eqthm:clt2} follows from \eqref{eq:mclt2}, \eqref{eq:main} and \eqref{eq:al-be2} and the fact that ${\Sigma}^{(1)}_{1,0,11/16} = \frac{2}{3}$. 
\end{proof}

\begin{proof}[Proof of Theorem~\ref{thm:qsl-rw2mc}]
If $p \in \left(0, \frac{11}{16}\right)\cup\left(\frac{113+\sqrt{97}}{128}, 1\right)$, then, from \eqref{eq:evalues}, we get $\lambda^{(1)}_p\left(\boldsymbol{\Gamma}^{(p)}\right) < -\frac{1}{2}$ almost surely. So, \eqref{eqthm:qsl1} follows from \eqref{eq:mqsl1}, \eqref{eq:main}, \eqref{eq:al-be2} and the fact that \[{\Sigma}^{(1)}_{0,1,p} = \frac{2}{11-16p}, \quad p \leq \frac{1}{2}, \quad {\Sigma}^{(1)}_{1,0,p} = \frac{2}{11-16p}, \quad p > \frac{1}{2}.\]
Similarly, if $p \in \left\{\frac{11}{16},\frac{113+\sqrt{97}}{128}\right\}$, then, from \eqref{eq:evalues}, we get $\lambda^{(2)}_p\left(\boldsymbol{\Gamma}^{(p)}\right) < \lambda^{(1)}_p\left(\boldsymbol{\Gamma}^{(p)}\right) = -\frac{1}{2}$ almost surely. So, \eqref{eqthm:qsl2} follows from \eqref{eq:mqsl2}, \eqref{eq:main}, \eqref{eq:al-be2} and the fact that ${\Sigma}^{(1)}_{1,0,11/16} = \frac{2}{3}$.
\end{proof}

\section*{Acknowledgements} The authors would like to thank Surajit Saha for many useful discussions. The second and third authors are thankful to the organisers of \emph{StatPHYS Kolkata XII} for the kind hospitality at the \emph{S.~N.~Bose National Center for Basic Sciences}, where this project was initiated. 


\end{document}